\documentclass[11pt,reqno]{amsart}

\usepackage[T1]{fontenc}
\usepackage{lmodern}
\usepackage{amsmath,amssymb,amsthm,mathtools}
\usepackage{microtype}
\usepackage[colorlinks=true,linkcolor=blue,citecolor=blue,urlcolor=blue,pagebackref=true]{hyperref}
\renewcommand*{\backref}[1]{}

\renewcommand*{\backrefalt}[4]{%
  \ifcase#1\relax
  \or (#2)%
  \else (#2)%
  \fi
}

\numberwithin{equation}{section}

\newtheorem{theorem}{Theorem}[section]
\newtheorem{proposition}[theorem]{Proposition}
\newtheorem{lemma}[theorem]{Lemma}
\newtheorem{corollary}[theorem]{Corollary}
\newtheorem{mainthm}{Theorem}

\theoremstyle{remark}
\newtheorem{remark}[theorem]{Remark}

\newcommand{\R}{\mathbb R}

\newcommand{\dd}{\,\mathrm d}
\newcommand{\Ent}{\operatorname{Ent}}

\newcommand{\one}{\mathbf 1}

\title[A dual variational principle for the Moser--Trudinger inequality]
{A dual variational principle for the Moser--Trudinger inequality:
existence, uniqueness and stability of extremals}

\author{Jungang Li}
\address{University of Science and Technology of China, Hefei 230026, China}
\email{jungangli@ustc.edu.cn}

\author{Wen Sun}
\address{University of Science and Technology of China, Hefei 230026, China}
\email{wsun@ustc.edu.cn}

\date{}

\subjclass[2020]{46E35, 26D10, 35A23, 35B35, 49Q22, 65G40}
\keywords{Moser--Trudinger inequality, extremal functions, existence,
uniqueness, nondegeneracy, stability, convex duality, relative entropy,
Hardy operator, computer-assisted proof}

\begin{document}

\begin{abstract}
For $p>1$, with $p'=p/(p-1)$, we prove the sharp dual inequality
\[
 \int_0^1-\log\left(1-\frac{\left|\int_0^xf(y)\,\dd y\right|^{p'}}{x}\right)
 \,\dd x\leq\mathsf K_p,
 \qquad \|f\|_{L^p(0,1)}\leq1,
\]
where $\mathsf K_p$ is the logarithm of the optimal value in Moser's
one-dimensional formulation of the Moser--Trudinger inequality.
The dual functional follows from the Gibbs variational principle,
monotone transport and the Legendre transform.  Its supremum is attained,
giving a dual proof of the existence theorems of Carleson--Chang \cite{CC1986} and
Hudson--Leckband \cite{HL2002}; the classical concentration threshold is realized by
equality cases in H\"older's inequality.  For $p=2$ we prove, with rigorous
computer-assisted estimates, that the nonnegative dual maximizer is
unique and nondegenerate.  Hence the extremal of the Moser--Trudinger
inequality on a disk $D$ is unique up to sign,
confirming conjectures of Adimurthi--Karthik--Giacomoni \cite{AKG2015} and
of Chen--Lu--Xue--Zhu \cite{CLXZ2026}.  The positive extremal $u_*$ is
nondegenerate, and, writing
$\mathcal M_D(u)=|D|^{-1}\int_De^{4\pi u^2}\,\dd x$, we obtain
\[
 \mathcal M_D(u_*)-\mathcal M_D(u)
 \geq c\,\min_\pm\|\nabla(u\mp u_*)\|_{L^2(D)}^2
\]
for all $u\in H_0^1(D)$ with $\|\nabla u\|_{L^2(D)}\leq1$.
Here $c>0$, and the exponent two is optimal, giving a quantitative
stability theorem of Bianchi--Egnell type.
\end{abstract}

\maketitle

\section{Introduction}

Let $\Omega\subset\R^n$, $n\geq2$, be a bounded domain.  In the borderline
case of the Sobolev embedding, functions in $W_0^{1,n}(\Omega)$ need not be
bounded, but there is $\alpha>0$ such that
$\int_\Omega\exp(\alpha|u|^{n/(n-1)})\,\dd x$ is bounded uniformly for
$\lVert\nabla u\rVert_{L^n(\Omega)}\leq1$.  This was found
independently by Yudovich \cite{Yudovich1961}, Pohozaev \cite{Pohozaev1965}
and Trudinger \cite{Trudinger1967}.  Moser \cite{Moser1971} determined the
optimal constant: with
\begin{equation}\label{eq:alpha-n}
  \alpha_n:=n\lvert\mathbb S^{n-1}\rvert^{1/(n-1)},
\end{equation}
one has
\begin{equation}\label{eq:intro-mt}
  \sup_{\substack{u\in W_0^{1,n}(\Omega)\\
                   \lVert\nabla u\rVert_{L^n(\Omega)}\leq1}}
  \frac1{\lvert\Omega\rvert}
  \int_\Omega \exp\bigl(\alpha_n\lvert u\rvert^{n/(n-1)}\bigr)\,\dd x
  <\infty,
\end{equation}
and the supremum is infinite if $\alpha_n$ is replaced by any larger
number.  Moser's motivation came from the problem of prescribing the
Gaussian curvature of $\mathbb S^2$ \cite{Moser1973}, and inequalities of
this type have since become a basic tool for Liouville-type equations and
in conformal geometry; the sharp form on the sphere is Onofri's inequality
\cite{Onofri1982,Beckner1993,CL1992}.

Moser's proof has two steps.  Schwarz symmetrization reduces the inequality
to radial functions on a ball, and the substitution
$\lvert x\rvert=Re^{-t/n}$ turns the radial problem into a problem on the
half-line.  For $p>1$ let $p'=p/(p-1)$, let $\dd\nu(t)=e^{-t}\dd t$ on
$[0,\infty)$, and set
\begin{equation}\label{eq:intro-Kp}
 \mathsf K_p
 :=\sup\Bigl\{\log\int_0^\infty e^{|w|^{p'}}\,\dd\nu:
 w(0)=0,\ \|w'\|_{L^p(0,\infty)}\leq1\Bigr\}.
\end{equation}
On a ball the supremum in \eqref{eq:intro-mt} equals $e^{\mathsf K_n}$
(Section~\ref{sec:reduction}).  Moser proved that $\mathsf K_n<\infty$ by a
direct analysis of the distribution function of $|w|^{n/(n-1)}-t$.  Jodeit
\cite{Jodeit1972} extended this one-dimensional lemma to real exponents,
and Adams \cite{Adams1988} proved a more general lemma of the same type,
which he used for higher-order derivatives.

\subsection*{Extremal functions}

Whether the supremum is attained is a more delicate question, because the
problem is not compact: a maximizing sequence may concentrate at a point,
and Lions \cite{Lions1985} showed that this is the only obstruction.  The
difficulty is quantitative: the largest value that a concentrating
sequence can produce is a specific number, and attainment follows once one
exhibits a function that exceeds it.
Carleson and Chang \cite{CC1986} computed the largest value that
concentrating sequences can produce on the unit ball, namely
$1+e^{1+1/2+\dots+1/(n-1)}$ after normalization by the volume, and proved
that the supremum lies strictly above it; hence it is attained.  This
scheme was carried out by Flucher \cite{Flucher1992} for bounded planar
domains, after work of Struwe \cite{Struwe1988}, by Lin \cite{Lin1996} in
all dimensions, and by Hudson and Leckband \cite{HL2002} for the
one-dimensional problem with real exponent.  On the disk, de Figueiredo,
do \'O and Ruf \cite{dFdOR2002} exhibited an explicit function whose value
exceeds the Carleson--Chang level, and Mancini and Martinazzi
\cite{MM2017} recovered existence from energy expansions of the radial
critical points; see also McLeod and Peletier \cite{MP1989} and, for
related inequalities of Bliss--Moser type, \cite{GLR2026}.

Using blow-up analysis, Li \cite{Li2001} proved the existence of extremals
on compact Riemannian surfaces.  He extended this approach to arbitrary
dimensions in \cite{Li2005}, treating both closed manifolds and manifolds
with boundary under Dirichlet conditions.

The existence theory extends to whole-space inequalities under a full
Sobolev norm constraint.  Ruf \cite{Ruf2005} proved the sharp inequality
and attainment on $\R^2$, and Li and Ruf \cite{LiRuf2008} established the
corresponding results in higher dimensions.  Lam, Lu and Zhang
\cite{LLZ2019} studied existence, nonexistence and symmetry of extremals
for critical and subcritical Trudinger--Moser inequalities on $\R^n$.
Chen, Lu and Zhu \cite{CLZ2020} identified attainment and nonattainment
regimes for perturbed critical Trudinger--Moser functionals on $\R^2$ and
Adams functionals on $\R^4$; they also proved existence under weighted
Sobolev constraints involving trapping potentials \cite{CLZ2023}.
For fractional Moser--Trudinger inequalities, Mancini and Martinazzi
\cite{MM2020} obtained existence results on bounded intervals and on the
real line using harmonic extensions and commutator estimates.

The concentration-compactness approach requires an upper bound for the
value produced by a concentrating sequence and a test function that
exceeds it.  A related approach studies the critical points themselves.  Struwe
\cite{Struwe1988} studied the critical points of the embedding of
$H_0^{1,n}$ into the Orlicz space, and Malchiodi and Martinazzi
\cite{MaMa2014} analyzed the critical points of the Moser--Trudinger
functional on the disk.  These results describe critical points and their
concentration; uniqueness of extremals also requires identifying which
critical points maximize the functional.

The concentration of solutions has a quantitative theory.  Adimurthi and
Struwe \cite{AdimurthiStruwe2000} studied bounded sequences of positive
solutions of semilinear elliptic equations with critical exponential
growth in dimension two, establishing a compactness alternative with
concentration at isolated points.  Druet \cite{Druet2006} proved energy
quantization for blowing-up solution sequences under the hypotheses of
his multibump analysis.  Del Pino, Musso and Ruf \cite{DPMR2010}
constructed positive solutions concentrating at prescribed numbers of
points in bounded planar domains, under suitable geometric conditions.
These results describe solutions beyond the maximizing branch and
illustrate why existence and concentration analysis alone do not determine
the uniqueness of extremals.

Another line of work refines the inequality by incorporating lower-order
norms.  Adimurthi and Druet \cite{AdimurthiDruet2004} strengthened the
critical planar inequality by a factor involving the $L^2$ norm in the
exponent, with a parameter below the first Dirichlet eigenvalue.
Mancini and Thizy \cite{ManciniThizy2019} proved nonexistence of extremals
when this parameter is sufficiently close to that eigenvalue.  Lu and
Yang \cite{LuYang2009} proved an analogous refinement for the
bi-Laplacian on bounded domains in dimension four and studied its
extremals.  Masmoudi and Sani \cite{MasmoudiSani2014} established an
exact-growth form of Adams' inequality on $\R^4$, with an $L^2$ bound on
the right-hand side.  More recently, Chen, Jiang, Lu and Zhu
\cite{CJLZ2026} studied additive $L^q$ perturbations on bounded planar
domains.  They found a threshold between attainment and nonattainment
for $1\leq q\leq2$, and proved attainment for every perturbation
coefficient when $q>2$.  These results describe the effect of lower-order
terms on the inequality and on the existence of extremals.

For the sharp Sobolev inequality, the optimal constant and the full family
of extremals are explicit \cite{Aubin1976,Talenti1976}.  The spectral gap
transverse to this family underlies the quantitative stability theorem of
Bianchi and Egnell \cite{BianchiEgnell1991}, in which the deficit controls
the squared distance to the extremals, with optimal exponent two.
For the Moser--Trudinger inequality, the sharp exponential coefficient
$\alpha_n$ and the existence of extremals are known.  The value of the
supremum at this coefficient is a separate variational problem, and no
extremal is known in closed form, even on the disk.  We study uniqueness,
constrained nondegeneracy and quantitative stability through an exact dual
formulation of this problem.
Adimurthi, Karthik and Giacomoni \cite[Remark~1.4]{AKG2015} conjectured that the
extremal on a ball is unique up to sign, and Chen, Lu, Xue and Zhu
\cite[Conjecture~1.3]{CLXZ2026} formulated the same conjecture for the
disk.  Chen, Lu, Xue and Zhu also proved that for every $\mu>0$ the
equation $-\Delta u=\mu ue^{u^2}$ has at most one positive solution in
the disk with zero boundary values \cite[Theorem~1.1]{CLXZ2026}; this does
not decide the question, since the multiplier of an extremal is not known
a priori.  In a recent preprint, Andrade, de Oliveira, do \'O, Macedo and
Ratzkin \cite[Theorem~3]{ADOMR2026} state a quadratic stability estimate
on the disk for the full range $0<\alpha\leq4\pi$, expressed in terms of
the distance to the set of extremals.  Here we identify this set at
$\alpha=4\pi$ as $\{\pm u_*\}$ and establish constrained nondegeneracy
through a slope criterion, yielding an independent proof of the critical
stability estimate; see Remark~\ref{rem:disk-literature} for a comparison.

\subsection*{A dual variational principle}

Our starting point is a dual reformulation of $\mathsf K_p$ in which
concentration becomes a question about the equality case of H\"older's
inequality.  For $f\in L^p(0,1)$ put
\[
 Z_f(x):=\frac{\bigl|\int_0^xf(y)\,\dd y\bigr|^{p'}}{x},
 \qquad
 \mathcal J_p(f):=\int_0^1-\log\bigl(1-Z_f(x)\bigr)\,\dd x,
\]
and
\begin{equation}\label{eq:KDHL-def}
 \mathsf J_p:=\sup_{\|f\|_{L^p(0,1)}\leq1}\mathcal J_p(f).
\end{equation}
By H\"older's inequality $Z_f\leq1$ on the unit ball, so that $\mathcal J_p$
is defined there with values in $[0,\infty]$.

The reformulation makes it possible to pass to the limit along weakly
convergent maximizing sequences.  The derivative constraint in
\eqref{eq:intro-Kp} is a weakly compact ball in $L^p(0,\infty)$, but the
critical exponential functional is not weakly continuous.  Concentration
at the center of the ball corresponds to $t\to\infty$ in the half-line
variable.  In the dual problem, weak convergence in the unit ball of
$L^p(0,1)$ gives uniform convergence of the primitives and pointwise
convergence of $Z_f$ for $x>0$.  The uniform integrability of
$-\log(1-Z_f)$ (Lemma~\ref{lem:dhl}) then gives convergence of
$\mathcal J_p$, and hence attainment.  The concentration threshold is
encoded by equality in H\"older's inequality, which forces $f$ to be a
multiple of the indicator function of an interval.
Consequently the upper bound for concentrating sequences, which is the
main step in \cite{CC1986,HL2002}, turns into the evaluation of
$\mathcal J_p$ on a one-parameter family of explicit functions.

The functional $\mathcal J_p$ arises from the Gibbs variational principle,
monotone transport and Legendre duality.  The Gibbs principle writes
$\log\int e^{|w|^{p'}}\dd\nu$ as the supremum of
$\int|w|^{p'}\dd\mu-\Ent(\mu\mid\nu)$ over probability measures $\mu$.
For the measures arising in the proof, the increasing map $T$ with
$T_\#\nu=\mu$ expresses the relative entropy through its derivative
$a=T'$ as $\Ent(\mu\mid\nu)=\int\Phi(a)\,\dd\nu$, where
$\Phi(a)=a-1-\log a$.  The exponential reference measure makes this
identity possible.  Finally, $w\circ T$ is a weighted
Hardy integral of $w'(T)T'^{1/p}$, and after $L^p$ duality the Legendre
transform $\Phi^*(z)=-\log(1-z)$ appears, evaluated at $Z_f$.
The Fenchel and H\"older inequalities give $\mathsf K_p\leq\mathsf J_p$;
choosing their equality cases and truncating the resulting weight gives
the reverse inequality.  Throughout, $\psi$ denotes the digamma function
and $\gamma$ Euler's constant.

The first main result gives exact equality of the optimal values and
describes the dual maximizers.

\begin{mainthm}\label{thm:A}
Let $p>1$.
\begin{enumerate}
 \item $\mathsf K_p=\mathsf J_p<\infty$.
 \item The set $\mathcal N_p$ of nonnegative maximizers of $\mathcal J_p$ on the
 unit sphere of $L^p(0,1)$ is nonempty and compact, and
 $\operatorname{dist}_{L^p}(f_k,\mathcal N_p)\to0$ for every sequence
 $f_k\geq0$ with $\|f_k\|_p\leq1$ and $\mathcal J_p(f_k)\to\mathsf J_p$.
 \item $\mathsf J_p>\log\bigl(1+e^{\psi(p)+\gamma}\bigr)$, and every
 $f\in\mathcal N_p$ satisfies $\sup_{0<x\leq1}Z_f(x)<1$.
 \item Every $f\in\mathcal N_p$ determines, by an explicit formula, a
 maximizer for $\mathsf K_p$.
\end{enumerate}
\end{mainthm}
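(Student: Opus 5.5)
The proof will follow the route sketched in the introduction, part (1) first. For the upper bound $\mathsf K_p\le\mathsf J_p$, fix $w$ with $w(0)=0$, $\|w'\|_p\le1$, and apply the Gibbs principle:
\[
\log\int e^{|w|^{p'}}\dd\nu=\sup_\mu\Bigl(\int|w|^{p'}\dd\mu-\Ent(\mu\mid\nu)\Bigr).
\]
It suffices to test against $\mu$ with smooth positive density, and for such $\mu$ there is an increasing $T$ with $T_\#\nu=\mu$. Then $\Ent(\mu\mid\nu)=\int\Phi(T')\dd\nu$ and $\int|w|^{p'}\dd\mu=\int|w(T(t))|^{p'}e^{-t}\dd t$. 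Next pass to $x=e^{-t}\in(0,1)$, writing $w(T(t))=\int_0^t w'(T)T'\,\dd s$. Set $g=w'(T)T'^{1/p}$, viewed as a function of the new variable (suitably reparametrized so that $\|g\|_{L^p}\le\|w'\|_p\le1$ by the change of variables). Then $w\circ T$ is a weighted Hardy integral of $g$ with weight $T'^{1/p'}$.

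Apply Hölder with a variable split, then Fenchel's inequality $z a-\Phi(a)\le\Phi^*(z)=-\log(1-z)$ pointwise with $z=Z_f(x)$. This bounds the Gibbs functional by $\mathcal J_p(f)$ for an $f$ of unit norm built from $g$. I expect the exact bookkeeping here to be the delicate part. The Hölder step has to be taken on the interval $(x,1)$ or $(0,x)$, after reversing orientation in the $x$ variable, so that the quantity appearing is exactly $|\int_0^xf|^{p'}/x$. I would determine the correct orientation by matching the equality cases.

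For the reverse inequality, start from $f$ with $\mathcal J_p(f)$ close to $\mathsf J_p$. First truncate so that $\sup Z_f<1$. Then choose $a=(\Phi')^{-1}$ at $Z_f$, i.e.\ $a=1/(1-Z_f)$, which is the Fenchel equality case, and choose $w'$ as the Hölder equality case. Reconstruct $T$ from $T'=a$, and check that the resulting $w$ is admissible and attains the value $\mathcal J_p(f)$. Sending the truncation to zero gives $\mathsf K_p\ge\mathsf J_p$. Finiteness comes from Moser's lemma via $\mathsf K_p<\infty$, or directly from Lemma~\ref{lem:dhl}. The same construction gives (4): when $\sup Z_f<1$, no truncation is needed and the formula produces a genuine maximizer of $\mathsf K_p$.

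For (2), note that $\mathcal J_p(|f|)\ge\mathcal J_p(f)$, so we may take maximizing sequences with $f_k\ge0$. Extract $f_k\rightharpoonup f$ weakly in $L^p$. The primitives converge uniformly, so $Z_{f_k}\to Z_f$ pointwise on $(0,1]$. The uniform integrability of Lemma~\ref{lem:dhl} then gives $\mathcal J_p(f_k)\to\mathcal J_p(f)$. Hence $f$ is a maximizer, and $\|f\|_p=1$, since otherwise rescaling to norm one would strictly increase $Z_f$. Weak convergence together with convergence of norms gives strong convergence in $L^p$ by uniform convexity. The same argument applied to sequences in $\mathcal N_p$ yields compactness, and arguing by contradiction yields the distance statement. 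The only subtle point is the case where the weak limit is $f=0$. This is the concentration scenario, and it is excluded by (3), so I would prove the value comparison in (3) before this step.

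For (3), compute the concentration level, which is the main obstacle. A sequence concentrating near $x=0$ with $Z_{f_k}$ approaching $1$ somewhere must be close to the Hölder equality case, i.e.\ $f\approx x_0^{-1/p}\one_{(0,x_0)}$ locally. Evaluating $\mathcal J_p$ on the family $f=c\,\one_{(0,s)}+h$ and optimizing should give exactly $\log(1+e^{\psi(p)+\gamma})$, where the digamma term comes from integrals of $\log(1-(x/s)^{p'-1}\cdots)$. Strict inequality then follows from a first-order perturbation: adding to the indicator a small tail on $(s,1)$ strictly increases $\mathcal J_p$. Finally, $\sup Z_f<1$ for maximizers holds because if $Z_f(x_0)=1$, then Hölder equality forces $f$ to be a multiple of $\one_{(0,x_0)}$. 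That family has value at most the threshold, contradicting the strict inequality.
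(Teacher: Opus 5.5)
The gap is in the step you defer as ``bookkeeping'' in part~(i). It is a missing idea, not a choice of orientation. H\"older applied directly to the Hardy integral $w(T(s))=\int_0^s g\,a^{1/p'}\,\dd r$, with or without a weight split, leaves $a$ integrated over $(0,s)$. In the unweighted case it gives $|w(T(s))|^{p'}\le\|g\|_p^{p'}T(s)$, hence
$\int|w|^{p'}\dd\mu-\Ent(\mu\mid\nu)\le\int a\,\dd\nu-\int\Phi(a)\,\dd\nu=1+\int\log a\,\dd\nu$, which is not bounded. There is no pointwise product $a\,z$ for Fenchel to act on, and nothing of the form $Z_f$ appears.

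What the paper does (Proposition~\ref{thm:tho}) is $L^{p'}(\nu)$ duality followed by Fubini, i.e.\ passing to the adjoint Hardy operator: for $\|h\|_{L^p(\nu)}\le1$,
\[
 \int_0^\infty H_ag\,h\,\dd\nu=\int_0^\infty g(r)\,a(r)^{1/p'}B_h(r)\,\dd r,
 \qquad B_h(r)=\int_r^\infty h(s)e^{-s}\,\dd s .
\]
This places $a$ and $g$ at the same point. H\"older then gives $\int a\,z_h\,\dd\nu$ with $z_h=e^r|B_h|^{p'}$, and Fenchel applies pointwise. Finally $z_h(r)=Z_f(e^{-r})$ for $f(x)=h(-\log x)$, because $B_h(r)=\int_0^{e^{-r}}f$. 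So $\mathcal J_p$ is evaluated at the dual test function, not at a function built from $g$; at the optimum this is the normalized $\operatorname{sgn}(w\circ T)|w\circ T|^{p'-1}$ transported by $x=e^{-s}$. The interval $(0,x)$ comes from the adjoint.

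Your reverse construction rests on the same structure. The H\"older equality case for $w'$ must be taken against $C=a^{1/p'}B_h$, with $h$ corresponding to the given $f$. With that in place, forcing $\sup Z_f<1$ by replacing $f$ with $rf$, $r\uparrow1$ (monotone convergence), is a legitimate alternative to the paper's truncated weight $a_N=\min\{N,(1-z_h)^{-1}\}$.

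In part~(ii) you do not need (iii), and the reason you give misreads the dual picture. Once Vitali gives $\mathcal J_p(f_k)\to\mathcal J_p(f)$, you have $\mathcal J_p(f)=\mathsf J_p>0=\mathcal J_p(0)$, so $f\neq0$ automatically. There is no loss of compactness in the dual problem. The concentration threshold of Moser's problem is carried by the nonzero contact profiles $x_0^{-1/p}\one_{(0,x_0)}$ with $\sup Z_f=1$, not by weak convergence to zero. Part~(iii) is needed only for noncontact, and hence for~(iv).

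Your (iii) and (iv) otherwise match the paper: the value $x_0(\psi(p)+\gamma)-x_0\log x_0-(1-x_0)\log(1-x_0)$ on the contact profiles, then a tail $\delta$ on $(x_0,1)$ with renormalization on $(0,x_0)$. You still need to check that the renormalization costs only $O(\delta^p|\log\delta|)=o(\delta)$ near the logarithmic singularity at $x_0$, while the tail gains $p'x_0^{1/p}(1-x_0)\delta$. For noncontact, also note that $Z_f\to0$ at $0$, so $\sup Z_f=1$ would be attained at some $x_0\in(0,1]$.
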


Part~(iv) is made explicit by the Euler--Lagrange equation of
$\mathcal J_p$, which is the equation of the original problem written in the
dual variable.  For $p=n$ Theorem~\ref{thm:A} produces extremals of
\eqref{eq:intro-mt} on balls, and for real $p$ extremals of the
one-dimensional problem of Jodeit; these are the theorems of
Carleson--Chang and of Hudson--Leckband.  The proof follows a different
route.  In the dual problem nothing is lost in the limit: the family
$-\log(1-Z_f)$, $\|f\|_p\leq1$, is uniformly integrable
(Lemma~\ref{lem:dhl}), so $\mathcal J_p$ attains its supremum.
The concentration threshold of Moser's problem is represented by dual
functions with $\sup Z_f=1$.  Equality in H\"older's inequality forces such functions
to be multiples of indicator functions of intervals $(0,x_0)$, the value of
$\mathcal J_p$ on them is computed explicitly, and its maximum over $x_0$ is
$\log(1+e^{\psi(p)+\gamma})$ (Proposition~\ref{prop:contact-profiles}).
For $p=n$ an integer, $\psi(n)+\gamma=1+\frac12+\dots+\frac1{n-1}$, so this
is the Carleson--Chang level.  That $\mathsf J_p$ is strictly larger follows
from a first-order perturbation of the optimal indicator function
(Proposition~\ref{thm:strict-contact}).  By part~(i), the strict inequality in
part~(iii) is equivalent to the inequality $A(q)>R(q)$ of Hudson and
Leckband \cite{HL2002}, with their $q$ equal to our $p$.

The proof of $\mathsf K_p\leq\mathsf J_p$ also gives a proof of
\eqref{eq:intro-mt} whose only estimate at the critical exponent is the
finiteness of $\mathsf J_p$ (Lemma~\ref{lem:dhl}); this follows from a
quantitative form of the equality case in H\"older's inequality.  By
part~(i), no constant is lost in the transport step.

The proof also decomposes the Moser--Trudinger deficit into three
nonnegative terms (Theorem~\ref{prop:deficit-decomposition}): the deficit
of the associated dual function, the remainder in the Fenchel inequality
for $\Phi$, and the loss in H\"older's inequality.  The Gibbs variational
principle is an equality at the Gibbs measure used in the proof.  At an
extremal all three terms vanish, which determines the equality cases and
allows uniqueness to pass from the dual problem to the original one.
For quadratic stability on the disk, we prove coercivity of the
constrained second variation by combining the radial slope criterion with
positivity in the nonradial Fourier modes.  Compactness of maximizing
sequences then extends the local estimate to the whole unit ball.

Mass transport has been used to prove sharp functional inequalities in
many settings \cite{CENV2004,MaggiVillani2008,CE2002,FMP2010}, including
transport inequalities for the exponential law \cite{Talagrand1996} and the
Moser--Trudinger--Onofri inequality \cite{DEJ2015}.  Legendre duality
relates the Onofri inequality to the logarithmic
Hardy--Littlewood--Sobolev inequality \cite{CL1992,Beckner1993}, and the
Sobolev inequality to the Hardy--Littlewood--Sobolev inequality
\cite{Lieb1983}; see also \cite{Dolbeault2011}.  Transport methods also
describe equality cases; for example, \cite[Theorem~5]{CENV2004}
classifies the Sobolev extremals.  In the present critical exponential
problem, the duality yields the logarithmic functional $\mathcal J_p$ on
a finite interval, its weak continuity, and an explicit representation of
the concentration threshold.  The equality conditions relate the dual
maximizers to the original extremals, so that this formulation also
permits the study of existence and uniqueness.  The transport is
one-dimensional and only its derivative enters.  We first use
symmetrization to reach the radial problem; for approaches that avoid
symmetrization see \cite{LamLu2013}.

\subsection*{The case \texorpdfstring{$p=2$}{p=2}}

In the Hilbert case the Euler--Lagrange equation of $\mathcal J_2$ becomes a
boundary value problem suited to shooting and spectral analysis.  At a maximizer,
differentiation under the constraint gives
\[
 -\lambda F''=\frac F{x-F^2},\qquad F(0)=0,\quad F'(1)=0,
\]
for the primitive $F$ of $f$.  A Pohozaev identity gives
$\mathcal J_2(f)=-\log(1-F(1)^2)$ at its solutions
(Proposition~\ref{prop:p2-pohozaev}).  The linearized operator at a
maximizer has Morse index one, and the coercivity of the second variation
on the tangent space of the sphere is equivalent to the positivity of the
derivative of $E(\lambda)=\int_0^1F_\lambda'^2$ with respect to $\lambda$
along the curve $\lambda\mapsto F_\lambda$ of solutions
(Proposition~\ref{prop:p2-spectral-reduction}), a condition of
Vakhitov--Kolokolov type \cite{VK1973,GSS1987,Maddocks1985}.  All positive
solutions belong to a single family of initial value problems indexed by
$\beta=\sqrt\lambda\,F'(0)$ (Lemma~\ref{lem:p2-shooting}), so that both
uniqueness and nondegeneracy become statements about a function of one real
variable.  For large $\beta$ the corresponding $f$ has $\sup Z_f$ close to
one, and an explicit estimate then gives $\mathcal J_2(f)\leq1.4605$, while
$\mathsf J_2\geq1.4819$ (Corollary~\ref{cor:p2-omega} and
Lemma~\ref{lem:p2-klow}).  Small values of $\beta$ are excluded by a
comparison argument, and the remaining compact range is treated by a
validated integration of the initial value problem.

\begin{mainthm}\label{thm:B}
For $p=2$ the set $\mathcal N_2=\{f_*\}$ consists of a single function,
and $f_*$ is nondegenerate: the Hessian of the restriction of
$f\mapsto\mathsf J_2-\mathcal J_2(f)$ to the unit sphere of $L^2(0,1)$
is coercive on the tangent space at $f_*$.  Consequently there is $c>0$
such that
\begin{equation}\label{eq:p2-quadratic}
 \mathsf J_2-\mathcal J_2(f)\geq c\,\lVert f-f_*\rVert_{L^2(0,1)}^2
 \qquad\text{for all }f\geq0\text{ with }\lVert f\rVert_{L^2(0,1)}\leq1,
\end{equation}
and the exponent two cannot be lowered.
\end{mainthm}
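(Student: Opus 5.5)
The plan is to reduce both uniqueness and nondegeneracy to statements about a single real shooting parameter, and then to pass from local coercivity to the global estimate \eqref{eq:p2-quadratic} by compactness (Theorem~\ref{thm:A}(ii)).

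\emph{Step 1: reduction to the shooting family.} Let $f\in\mathcal N_2$. By Theorem~\ref{thm:A}(iii), $\sup Z_f<1$, so $\mathcal J_2$ is smooth near $f$. The Lagrange multiplier rule then gives $-\lambda F''=F/(x-F^2)$ with $F(0)=0$, $F'(1)=0$ and $\lambda>0$, where $F$ is the primitive of $f$. Since $f\ge0$ and the right-hand side is positive, $F$ is positive, increasing and concave. After rescaling, Lemma~\ref{lem:p2-shooting} places $F$ in the one-parameter family indexed by $\beta=\sqrt\lambda\,F'(0)$. For each $\beta$ I shoot until the first zero of $F'$, which fixes the point where $F'(1)=0$ after normalization, and then read off $\lambda(\beta)$ and the value $\mathcal J_2=-\log(1-F(1)^2)$ from the Pohozaev identity (Proposition~\ref{prop:p2-pohozaev}). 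Uniqueness of $f_*$ thus becomes the statement that the function $\beta\mapsto -\log(1-F_\beta(1)^2)$, restricted to solutions with $\|f\|_2=1$, exceeds $1.4819$ at exactly one $\beta$. The lower bound $1.4819$ comes from Lemma~\ref{lem:p2-klow}.

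\emph{Step 2: splitting the $\beta$-range.} I split the range of $\beta$ into three parts.
\begin{enumerate}
\item For large $\beta$, $\sup Z_f$ is close to $1$, and Corollary~\ref{cor:p2-omega} bounds the value by $1.4605<1.4819$. These solutions are therefore not maximizers.
\item For small $\beta$, the equation is a perturbation of the linear problem $-\lambda F''=F/x$. A comparison argument shows that the value stays near the linear-regime value, well below $\mathsf J_2$.
\item On the remaining compact interval, I use interval-arithmetic validated integration of the initial value problem. I would certify that the energy $E(\lambda)=\int_0^1F_\lambda'^2$ is strictly monotone along the branch, so that the normalization $\|f\|_2=1$ selects exactly one solution. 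Equivalently, the value is certified to exceed the threshold on only one subinterval, inside which $dE/d\lambda>0$ is certified.
\end{enumerate}
This Step~2 is the main obstacle. The rigorous enclosures near the singular point $x=0$ (handled by a Taylor or Frobenius expansion $F\sim\beta x$) and near the turning point where $F'$ vanishes must be tight enough to separate the sign of $dE/d\lambda$ from zero.

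\emph{Step 3: nondegeneracy.} Proposition~\ref{prop:p2-spectral-reduction} shows that the linearization has Morse index one and that coercivity of the constrained Hessian on the tangent space is equivalent to $E'(\lambda_*)>0$, which Step~2 certifies. A compactness argument then upgrades this to a coercive lower bound.

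\emph{Step 4: the global estimate.} Coercivity together with $C^2$ regularity of $\mathcal J_2$ near $f_*$ gives $\mathsf J_2-\mathcal J_2(f)\ge c_0\|f-f_*\|^2$ for $\|f\|_2=1$ near $f_*$. If $\|f\|<1$, I compare $f$ with $f/\|f\|$: scaling strictly increases $\mathcal J_2$, and the gain is at least of order $1-\|f\|$, so this case is also covered. Far from $f_*$, I argue by contradiction. If there were $f_k\ge0$ with $(\mathsf J_2-\mathcal J_2(f_k))/\|f_k-f_*\|^2\to0$ and the $f_k$ bounded away from $f_*$, then $\mathcal J_2(f_k)\to\mathsf J_2$. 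Theorem~\ref{thm:A}(ii) together with $\mathcal N_2=\{f_*\}$ forces $f_k\to f_*$, a contradiction.

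\emph{Step 5: sharpness of the exponent.} Take $f_\varepsilon=(f_*+\varepsilon h)/\|f_*+\varepsilon h\|$ with $h\ge0$ tangent to the sphere, for instance $h$ supported where $f_*>0$. The first variation vanishes, so the deficit is $O(\varepsilon^2)$ while $\|f_\varepsilon-f_*\|\sim\varepsilon$. Hence no exponent smaller than two can hold.
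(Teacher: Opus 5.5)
Your outline follows the paper's architecture: shooting in $\beta$, exclusion of small and large $\beta$, validated integration, the slope criterion of Proposition~\ref{prop:p2-spectral-reduction}, compactness, and a tangent perturbation. However, Step~2(3), which carries the uniqueness proof, is set up so that it cannot succeed. Along the shooting branch the energy is not monotone. $E(\beta)$ increases through $E(\beta_*)=1$ to a maximum $E\approx1.011$ at $\beta\approx3.985$, where $\mathfrak g$, and hence $\dd E/\dd\lambda$, changes sign. It then decreases to its limit $1$ from above (Remark~\ref{rem:p2-fold}). The regime $\sup Z_f\geq0.998$ needed for Corollary~\ref{cor:p2-omega} is certified in the paper only on $[5,32]$. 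Unless you can push it below the fold, the fold lies inside your compact interval, and no enclosure can certify strict monotonicity of $E$ there.

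Your ``equivalent'' version does not help. The Pohozaev value $-\log(1-A(\beta)^2)$ is about $1.558$ at $\beta=4$, so the set where it exceeds $1.4819$ also contains the fold. Near the fold neither a value bound nor a slope bound excludes a maximizer; the normalization $E(\beta)=1$ is what excludes it. The missing ingredient is a certified exclusion $E(\beta)\neq1$ with a margin, with monotonicity checked only on a narrow window. The paper's split is:
\begin{itemize}
\item $E\leq0.99997$ on $[\beta_{\min},\beta_-]$;
\item $\mathfrak g\geq0.0094$ only on $W=[\beta_-,\beta_+]\approx[3.278,3.289]$;
\item $E\geq1.00004$ on $[\beta_+,5]$, which crosses the fold;
\item $Z\geq0.998$ on $[5,32]$ and Lemma~\ref{lem:p2-large} for $\beta\geq31.62$, because $E\to1^+$ destroys energy margins as $\beta\to\infty$.
\end{itemize}
For small $\beta$ the energy is likewise the natural quantity: Lemma~\ref{lem:p2-small} gives $E(\beta)<Y\leq j_{0,1}^2\beta^2/4<1$, which is cleaner than comparing values with a ``linear-regime value''.

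There are two smaller points. First, on $W$, certifying $\dd E/\dd\lambda>0$ along the local curves gives at most one solution of $E(\beta)=1$ only after you show that $\beta\mapsto\lambda(\beta)$ is strictly monotone on $W$. The paper gets this from continuity and the local uniqueness in the implicit function theorem, which makes $\lambda(\beta)$ locally injective. Second, in Step~5 a direction $h\geq0$ tangent to the sphere does not exist: $f_*>0$ on $(0,1)$, so $\langle f_*,h\rangle_2=0$ and $h\geq0$ force $h=0$. Take instead a sign-changing $h\in C_c^\infty(0,1)$ orthogonal to $f_*$. Then $(f_*+\varepsilon h)/\lVert f_*+\varepsilon h\rVert_2$ is nonnegative for small $|\varepsilon|$, because $f_*$ is bounded below by a positive constant on $\operatorname{supp}h$.
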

\subsection*{The disk}

The one-dimensional problem with $p=2$ is exactly the Moser--Trudinger
problem for radial functions on a disk, so the preceding results transfer
to the Euclidean setting.  The dual problem is tied to the Euler--Lagrange
equation of the Moser--Trudinger functional on the unit disk $D$,
\[
 -\Delta u=\mu\,u\,e^{4\pi u^2}\ \text{ in }D,\qquad u=0\ \text{ on }\partial D.
\]
Each solution of the dual equation with $\sup Z_f<1$ produces a positive
radial solution with maximum $\beta/(2\sqrt\pi)$, the same Dirichlet energy
and the same value of the functional
(Proposition~\ref{prop:disk-correspondence}).  Conversely, the transport
proof of $\mathsf K_2\leq\mathsf J_2$ yields the exact decomposition of the
Moser--Trudinger deficit recalled above
(Theorem~\ref{prop:deficit-decomposition}).  At an extremal all three
terms vanish, and this forces the dual function attached to the extremal to
be $f_*$.  Under the correspondence, $E(\beta)=\Lambda(\beta)$, the
Dirichlet energy of the radial solution with maximum $\beta/(2\sqrt\pi)$.
Together with a ground state representation for the nonradial Fourier
modes, the sign of $\Lambda'(\beta_*)$ gives the nondegeneracy of the
extremal.

\begin{mainthm}\label{thm:C}
Let $D\subset\R^2$ be a disk, and put
\begin{gather*}
 \mathcal M_D(u):=\frac1{|D|}\int_De^{4\pi u^2}\,\dd x
 \qquad(u\in H_0^1(D)),\\
 \mathrm{MT}(D):=\sup\bigl\{\mathcal M_D(u):
 u\in H_0^1(D),\ \lVert\nabla u\rVert_{L^2(D)}\leq1\bigr\}.
\end{gather*}
The supremum is attained exactly at $\pm u_*$, where $u_*$ is positive,
radial and strictly decreasing.  Moreover
$\mathrm{MT}(D)=e^{\mathsf J_2}$ and
\[
 4.4027320\leq\mathrm{MT}(D)\leq4.4027321 .
\]
The extremal $u_*$ is nondegenerate: the Hessian of the restriction of
$u\mapsto\mathrm{MT}(D)-\mathcal M_D(u)$ to the unit sphere of $H_0^1(D)$
is coercive on the tangent space at $u_*$.  Consequently there is $c>0$ such that
\[
 \mathrm{MT}(D)-\mathcal M_D(u)
 \geq c\,\min_\pm\lVert\nabla(u\mp u_*)\rVert_{L^2(D)}^2
\]
for all $u\in H_0^1(D)$ with $\lVert\nabla u\rVert_{L^2(D)}\leq1$, and the
exponent two cannot be lowered.
\end{mainthm}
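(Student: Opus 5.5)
By scaling, $\mathcal M_D$ is invariant under dilation and translation, so we may take $D$ to be the unit disk. The plan is to reduce to the one-dimensional results for $p=2$, then handle nonradial directions separately.

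\textbf{Reduction to the radial problem.} Schwarz symmetrization does not increase $\|\nabla u\|_2$ and preserves $\mathcal M_D$. Moser's substitution $|x|=e^{-t/2}$, $w(t)=\sqrt{4\pi}\,u$, identifies radial functions with the admissible class in \eqref{eq:intro-Kp} for $p=2$. Hence $\mathrm{MT}(D)=e^{\mathsf K_2}$, and Theorem~\ref{thm:A}(i) gives $\mathrm{MT}(D)=e^{\mathsf J_2}$. The numerical enclosure comes from the validated computation of $\mathsf J_2$ underlying Theorem~\ref{thm:B}: the Pohozaev identity (Proposition~\ref{prop:p2-pohozaev}) gives $\mathsf J_2=-\log(1-F_*(1)^2)$, with $F_*(1)$ bounded by interval arithmetic at the shooting parameter $\beta_*$.

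\textbf{Uniqueness.} Let $u$ be an extremal. I would use the deficit decomposition (Theorem~\ref{prop:deficit-decomposition}) on $u^\#$. All three nonnegative terms vanish, so the attached dual function is a maximizer, hence equals $f_*$ by Theorem~\ref{thm:B}. Equality in Fenchel and Hölder then determines $w$, and so $u^\#=u_*$, via the explicit formula in Theorem~\ref{thm:A}(iv) and Proposition~\ref{prop:disk-correspondence}.

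To remove the symmetrization, note that $|u|$ is also an extremal and satisfies the Euler--Lagrange equation $-\Delta|u|=\mu|u|e^{4\pi u^2}$ with $\mu>0$. By the strong maximum principle, $|u|>0$ in $D$, so $u$ does not change sign. Gidas--Ni--Nirenberg then shows $|u|$ is radial and decreasing, so $|u|=u^\#=u_*$ and $u=\pm u_*$. Alternatively, one can use equality in Pólya--Szegő together with Brothers--Ziemer, since the radial profile has no flat zones.

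\textbf{Nondegeneracy.} I would decompose the tangent space $\{v:\int\nabla u_*\cdot\nabla v=0\}$ in Fourier modes $v=\sum_k v_k(r)e^{ik\theta}$.

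For $k=0$: radial variations are exactly the one-dimensional problem. Under the correspondence, the slope criterion $\Lambda'(\beta_*)=E'(\beta_*)\neq0$ with the correct sign (Proposition~\ref{prop:p2-spectral-reduction} and Theorem~\ref{thm:B}) gives coercivity.

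For $k\ge1$: the quadratic form is $\int|\nabla v|^2-\mu\int(1+8\pi u_*^2)e^{4\pi u_*^2}v^2$. I would use the ground state representation $v=\partial_r u_*\,\phi$. Since $\partial_r u_*<0$ solves the linearized equation with the $k=1$ potential $1/r^2$, the form becomes $\int(\partial_ru_*)^2\bigl(|\phi_r|^2+(k^2-1)\phi^2/r^2\bigr)$ up to boundary terms. The sign of $\partial_r u_*$ at $r=1$ controls these boundary terms, and positivity follows. I expect this step to be the most delicate, because for $k=1$ the form is only nonnegative, with kernel $\partial_{x_i}u_*$ excluded by the Dirichlet condition; strict positivity needs that boundary contribution. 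Compactness of the lower-order part, via Moser--Trudinger integrability of $e^{4\pi(1+\epsilon)u_*^2}$, upgrades positivity to coercivity in $H_0^1$.

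\textbf{Global stability.} Coercivity together with a second-order Taylor expansion on the sphere gives the quadratic bound near $\pm u_*$. Away from $\pm u_*$, I would argue by contradiction using compactness of maximizing sequences. If $u_k$ is a sequence with deficit $\to0$, then $u_k^\#$ converges in $H^1$ by Theorem~\ref{thm:A}(ii) and the transfer back. For $u_k$ itself, use Lions' concentration-compactness: concentration is excluded because $\mathrm{MT}(D)>1+e$, so a subsequence converges weakly to an extremal, with strong convergence by equality of norms. This makes $c>0$ uniform on the whole unit ball. Deficits when $\|\nabla u\|<1$ are handled by scaling to the sphere, since $\mathcal M_D(u/\|\nabla u\|)\ge\mathcal M_D(u)$. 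Optimality of the exponent two follows from the Taylor expansion along a tangent curve, where the deficit is $O(\epsilon^2)$.
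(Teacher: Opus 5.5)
Your architecture matches the paper's: reduction to $\mathsf K_2=\mathsf J_2$, uniqueness from $D_1=D_2=D_3=0$ plus Gidas--Ni--Nirenberg, Fourier splitting of $\mathcal T_{u_*}$, the ground state $g_*=-u_*'$ for $k\geq1$, and Lions plus Carleson--Chang globally. However, the radial mode $k=0$ is not actually proved.

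Proposition~\ref{prop:p2-spectral-reduction} and Theorem~\ref{thm:B} give coercivity of the Hessian of the \emph{dual} functional $\mathcal J_2$ at $f_*$, that is, of $\mathfrak q$ on $\mathcal T_F$. The radial part of $\mathfrak Q_*$ is instead the Hessian of the \emph{primal} functional $\int e^{w^2}\dd\nu$ at $w_*$. The map of Proposition~\ref{prop:disk-correspondence} sends critical points to critical points. But $\mathsf K_2=\mathsf J_2$ comes from suprema over auxiliary variables (Gibbs measure, weight $a$, dual variable $h$), so no chain rule carries one Hessian to the other. What transfers is only the scalar $\Lambda'(\beta_*)=E'(\beta_*)>0$. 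This is Corollary~\ref{cor:disk-slope}, which itself needs $\Lambda=E$ on $W$ via uniqueness of radial solutions with prescribed maximum.

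You therefore need a separate Vakhitov--Kolokolov argument on the primal side, and it is not a copy of the dual one. On the dual side, $\ker\mathcal A=\{0\}$ came from the scaling solution $\Psi$. On the primal side, invertibility of $-\Delta-V_*$ on $H_0^1(D)$ is not known in advance; the paper deduces it only afterwards, in Remark~\ref{rem:disk-nondeg}. So a Schur-complement formulation is not available.

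The paper's route (Step~3 of Proposition~\ref{prop:disk-nondeg}) supplies the missing idea:
\begin{enumerate}
 \item Take a radial kernel element $\eta\in\mathcal T_{u_*}$; it solves $-\Delta\eta-V_*\eta=c_1\mu_*u_*e^{4\pi u_*^2}$.
 \item Pair it with $\zeta=\partial_\beta u_\beta|_{\beta_*}$, using $\langle u_*,\zeta\rangle=\tfrac12\Lambda'(\beta_*)$, to get $c_1=0$.
 \item Deduce $\eta=\eta(0)\zeta_0$.
 \item Force $\eta(0)=0$ by a case split: use $\eta|_{\partial D}=0$ if $\mu'(\beta_*)\neq0$, and $\langle u_*,\eta\rangle=0$ if $\mu'(\beta_*)=0$.
\end{enumerate}

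There are also some smaller points.
\begin{itemize}
 \item The deficit identity needs $w_M=w$, so you must first show that $u^\#$ is bounded, via regularity from the Euler--Lagrange equation.
 \item For $\lVert\nabla u\rVert<1$, the monotonicity $\mathcal M_D(u/\lVert\nabla u\rVert)\geq\mathcal M_D(u)$ is not enough. For $u=(1-\delta)u_*$ the sphere estimate gives nothing, while the distance is $\delta$. With $\tilde u=u/\lVert\nabla u\rVert$ and $\delta=1-\lVert\nabla u\rVert$, you need the linear bound $\mathcal M_D(\tilde u)-\mathcal M_D(u)\geq c\,\delta$, as in Step~4 of Proposition~\ref{prop:disk-nondeg}.
 \item In the ground state identity for $k\geq1$, the boundary terms $rg_*g_*'\phi_k^2$ simply vanish, because $u_*'(1)\neq0$ gives $\phi_k(1)=0$. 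Strict positivity at $k=1$ comes from $\phi_k'\equiv0$ together with $\phi_k(1)=0$, not from a boundary contribution.
 \item The Carleson--Chang level bounds radial concentrating sequences, so pass to $|u_k|^*$ before excluding concentration, as the paper does. Your appeal to Theorem~\ref{thm:A}(ii) for $u_k^\#$ concerns dual sequences and is not needed.
\end{itemize}
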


Theorem~\ref{thm:C} confirms the conjecture of \cite{AKG2015} for $n=2$
and Conjecture~1.3 of \cite{CLXZ2026}.  For comparison, the
Carleson--Chang level on the disk is $1+e\approx3.7183$.  In the
terminology of \cite{ADOMR2026}, $u_*$ is a nondegenerate maximizer for
$\alpha=4\pi$; the stability estimate is the case $\alpha=4\pi$ of
\cite[Theorem~3]{ADOMR2026}, which we prove by a different route
(Remark~\ref{rem:disk-literature}).  In the unpublished companion draft
\cite{MTpaper} we give a second computer-assisted proof of the
uniqueness, which works directly with the radial solutions of the
Euler--Lagrange equation.

\subsection*{Computer assistance}

The proofs of Theorems~\ref{thm:B} and~\ref{thm:C} use four estimates
obtained with interval arithmetic: the bound for $\mathcal J_2$ near $\sup Z_f=1$
(Corollary~\ref{cor:p2-omega}), the lower bound for $\mathsf J_2$
(Lemma~\ref{lem:p2-klow}), a validated integration of the initial value
problem over a compact range of $\beta$
(Proposition~\ref{prop:p2-cap}), and the enclosures of $\beta_*$ and
$A_*^2$ in Proposition~\ref{prop:disk-constant}.  The ODE estimates are
certified by enclosing the solution in an interval at every step of a
Taylor-method integration with outward-rounded arithmetic.  The validated
integrations in Propositions~\ref{prop:p2-cap}
and~\ref{prop:disk-constant} were carried out independently with two
libraries---Arb (Python~v2, \cite{Johansson2017}) and the CAPD library
(\cite{CAPD2021})---using distinct nearby endpoints; both produce
compatible enclosures, so the conclusion does not depend on a single
implementation.  The bounds in Corollary~\ref{cor:p2-omega} and
Lemma~\ref{lem:p2-klow} were certified with ball arithmetic in Arb.
Appendix~\ref{sec:appendix-cap} describes the computations; the source
code, fixed input data, and output files will accompany the arXiv
submission as a computational supplement to this paper.
For a survey of computer-assisted
proofs in partial differential equations we refer
to~\cite{GomezSerrano2019}.

\subsection*{Organization}

Section~\ref{sec:reduction} gives the one-dimensional reduction and proves
finiteness and uniform integrability of the dual functional.
Section~\ref{sec:duality} establishes the dual variational principle and
the deficit identity, and deduces \eqref{eq:intro-mt} on general domains.
Section~\ref{sec:dual-extremals} proves existence and compactness of dual
maximizers, excludes contact with the singularity, and reconstructs the
primal extremals, completing Theorem~\ref{thm:A}.

For $p=2$, Section~\ref{sec:p2-spectrum} develops the second variation,
the slope criterion for nondegeneracy, and the finite-power stability
estimate.  Section~\ref{sec:p2-quadratic} reduces the problem to a family
of initial value problems and proves uniqueness and nondegeneracy by
parameter localization and validated integration.  Together with the
stability estimate, this proves Theorem~\ref{thm:B}.
Section~\ref{sec:disk} treats the problem on the disk and proves
Theorem~\ref{thm:C}.  Appendix~\ref{sec:appendix-ps} recalls the transport
proof of the P\'olya--Szeg\H{o} inequality, and
Appendix~\ref{sec:appendix-cap} gives the details of the computer-assisted
estimates.

\section{The one-dimensional problem and the dual functional}
\label{sec:reduction}

We first recall the reduction of \eqref{eq:intro-mt} to the half-line.
We then prove that $\mathsf J_p$ is finite and that the logarithmic
integrands defining $\mathcal J_p$ are uniformly integrable on the unit
ball of $L^p(0,1)$.  These facts prepare the proof of the dual variational
principle in Section~\ref{sec:duality}.

\subsection{Symmetrization and the one-dimensional problem}

We use the following standard form of the P\'olya--Szeg\H{o} principle;
see, for example, \cite[Chapter~3]{LiebLoss2001}.  A transport derivation is
recalled in Appendix~\ref{sec:appendix-ps}.

\begin{proposition}\label{prop:ps}
Let $v\in W_0^{1,n}(\Omega)$ be nonnegative, and let $v^*$ be its Schwarz
decreasing rearrangement on the ball $B_R$ satisfying
$\lvert B_R\rvert=\lvert\Omega\rvert$.  Then $v$ and $v^*$ are
equimeasurable and
\begin{equation}\label{eq:ps}
  \int_{B_R}\lvert\nabla v^*\rvert^n\,\dd x
  \leq
  \int_\Omega\lvert\nabla v\rvert^n\,\dd x.
\end{equation}
In particular, for every nonnegative Borel function $\Psi$,
\[
  \int_\Omega\Psi(v)\,\dd x
  =\int_{B_R}\Psi(v^*)\,\dd x.
\]
\end{proposition}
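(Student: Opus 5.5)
Equimeasurability and the identity for $\Psi$ come from the construction of $v^*$; the work is in the Dirichlet inequality, which I will prove by the coarea formula together with the isoperimetric inequality. The transport version in Appendix~\ref{sec:appendix-ps} is an alternative I will not use here.

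\emph{Step 1: construction and equimeasurability.} I define $v^*(x):=\inf\{t\geq0:\mu(t)\leq|B_{|x|}|\}$ on $B_R$, where $\mu(t)=|\{v>t\}|$. Then $\{v^*>t\}$ is the centered ball of volume $\mu(t)$, so $|\{v^*>t\}|=|\{v>t\}|$ for every $t\geq0$. Hence the push-forwards of Lebesgue measure under $v$ and $v^*$ agree on $[0,\infty)$. The formula $\int_\Omega\Psi(v)=\int_{B_R}\Psi(v^*)$ for nonnegative Borel $\Psi$ follows by the layer-cake representation. Equivalently, it holds first for indicators $\Psi=\one_{(t,\infty)}$, then by a monotone class argument and monotone convergence. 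Note that $|B_R|=|\Omega|$ also matches the level $t=0$.

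\emph{Step 2: the gradient inequality.} By density and lower semicontinuity of the $L^n$ norm of the gradient, I reduce to $v$ smooth, nonnegative, compactly supported, with finitely many critical values; Sard's theorem lets me treat a.e.\ level as regular. For such $v$ the coarea formula gives
\[
 -\mu'(t)\geq\int_{\{v=t\}}|\nabla v|^{-1}\,\dd\mathcal H^{n-1},
 \qquad
 \int_\Omega|\nabla v|^n\,\dd x=\int_0^\infty\int_{\{v=t\}}|\nabla v|^{n-1}\,\dd\mathcal H^{n-1}\,\dd t .
\]
Write $P(t)=\mathcal H^{n-1}(\{v=t\})$. Hölder's inequality applied to $1=|\nabla v|^{(n-1)/n}|\nabla v|^{-(n-1)/n}$ on $\{v=t\}$ gives
\[
 \int_{\{v=t\}}|\nabla v|^{n-1}\,\dd\mathcal H^{n-1}\geq P(t)^n\Bigl(\int_{\{v=t\}}|\nabla v|^{-1}\Bigr)^{-(n-1)}\geq \frac{P(t)^n}{(-\mu'(t))^{n-1}}.
 \]
The isoperimetric inequality then gives $P(t)\geq P^*(t)$, the perimeter of the ball of volume $\mu(t)$.

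For $v^*$, which is radial, $|\nabla v^*|$ is constant on each level sphere. So the chain above holds with equality for $v^*$, provided $v^*$ is locally absolutely continuous in $r$. Integrating in $t$ then yields \eqref{eq:ps}.

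\emph{Step 3: main obstacle.} The delicate point is the regularity bookkeeping. I must show that $v^*\in W_0^{1,n}(B_R)$ and that the equality computation for $v^*$ is legitimate when $\mu$ has jumps or flat parts. A jump of $\mu$ corresponds to a plateau of $v$ with $\nabla v=0$; there $\mu'$ has a singular part. A flat part of $\mu$ corresponds to a jump of $v^*$, which cannot occur since the isoperimetric bound forces $\mu$ to be strictly decreasing on the range of $v$ below its supremum.

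I would handle this by working with the absolutely continuous part of $\mu$, which only strengthens the inequality, and by first proving the estimate for $v$ piecewise linear on a triangulation, as in Lieb--Loss. For such $v$, $\mu$ is piecewise smooth and $v^*$ is Lipschitz. The general case then follows by approximating $v$ in $W^{1,n}$. Rearrangement is nonexpansive in $L^n$, so $v_k^*\to v^*$ in $L^n$. Weak lower semicontinuity of the gradient norm then transfers \eqref{eq:ps} to the limit, and shows at the same time that $v^*\in W_0^{1,n}(B_R)$.
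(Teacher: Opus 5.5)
Your proposal is correct and follows essentially the same route as the paper's Appendix~A: coarea formula and Sard's theorem, H\"older's inequality on each level surface, the isoperimetric inequality, equality for the radial rearrangement, then approximation and lower semicontinuity. Transport enters the paper only in proving the isoperimetric inequality (via the Brenier map and the sharp $L^1$ Sobolev inequality), which you take as known, so the appendix is the same argument you give in Step~2 rather than an alternative to it; your Step~3 also spells out the regularity of $v^*$, which the paper leaves implicit.
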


The equality cases in \eqref{eq:ps} were characterized by Brothers and
Ziemer \cite{BZ1988}.  We do not use this characterization: on the disk, the
radial symmetry of extremals follows from their Euler--Lagrange equation and
the theorem of Gidas, Ni and Nirenberg \cite{GNN1979}; see
Section~\ref{sec:disk}.

For a radial function $v=v(r)$ on $B_R$, introduce
\begin{equation}\label{eq:radial-change}
  t=n\log\frac Rr,
  \qquad
  r=Re^{-t/n},
  \qquad
  w(t):=\kappa_n v\bigl(Re^{-t/n}\bigr),
\end{equation}
where
\[
  \kappa_n:=n^{(n-1)/n}\lvert\mathbb S^{n-1}\rvert^{1/n},
  \qquad\text{so that}\quad
  \kappa_n^{n/(n-1)}=\alpha_n .
\]
Since $\dd t/\dd r=-n/r$ and
$r^{n-1}\dd r=-(R^n/n)e^{-t}\dd t$, direct calculation gives
\begin{align}
  \int_{B_R}\lvert\nabla v\rvert^n\,\dd x
  &=\int_0^\infty\lvert w'(t)\rvert^n\,\dd t,
  \label{eq:energy-identity}\\
  \frac1{\lvert B_R\rvert}
  \int_{B_R}e^{\alpha_n\lvert v\rvert^{n/(n-1)}}\,\dd x
  &=\int_0^\infty e^{\lvert w(t)\rvert^{n/(n-1)}-t}\,\dd t.
  \label{eq:exponential-identity}
\end{align}
The boundary condition $v(R)=0$ becomes $w(0)=0$.  Together with
Proposition~\ref{prop:ps}, these identities reduce \eqref{eq:intro-mt} to the
following statement, proved in Subsection~\ref{sec:gibbs}.

\begin{theorem}\label{thm:one-dimensional}
Let $w$ be a locally absolutely continuous real-valued function
on $[0,\infty)$ satisfying
\begin{equation}\label{eq:w-energy}
  w(0)=0,
  \qquad
  \int_0^\infty\lvert w'(t)\rvert^n\,\dd t\leq1.
\end{equation}
Then
\begin{equation}\label{eq:one-dimensional}
  \log\int_0^\infty e^{\lvert w(t)\rvert^{n/(n-1)}-t}\,\dd t
  \leq\mathsf J_n,
\end{equation}
where $\mathsf J_n$ is the constant defined in \eqref{eq:KDHL-def}.
\end{theorem}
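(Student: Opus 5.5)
The plan is to combine the Gibbs variational principle, a one-dimensional monotone transport and the Legendre transform of $\Phi(a)=a-1-\log a$. This gives, for each admissible $w$, a function $f$ with $\|f\|_{L^p(0,1)}\leq1$ and $\log\int e^{|w|^{p'}}\dd\nu\leq\mathcal J_p(f)\leq\mathsf J_p$, where $p=n$ and $\dd\nu=e^{-t}\dd t$.

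\emph{Reduction and Gibbs step.} First I reduce to nice $w$. Replacing $w$ by $|w|$ leaves the integral unchanged and does not increase the energy. Replacing $w$ by $w\wedge M$, or freezing $w$ after a time $t_0$, also keeps \eqref{eq:w-energy}. By monotone convergence it then suffices to treat bounded, nondecreasing $w$ that are constant for large $t$, for which $\int e^{|w|^{p'}}\dd\nu<\infty$. For such $w$ the Gibbs principle gives the identity
\[
\log\int e^{|w|^{p'}}\dd\nu=\int|w|^{p'}\dd\mu-\Ent(\mu\mid\nu),
\]
where $\mu$ is the Gibbs measure $\dd\mu=e^{|w|^{p'}}\dd\nu/\int e^{|w|^{p'}}\dd\nu$. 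Its density is bounded above and below on compacts and equals a constant multiple of $e^{-t}$ near infinity.

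\emph{Transport step.} Let $T$ be the increasing map $[0,\infty)\to[0,\infty)$ with $T(0)=0$ and $T_\#\nu=\mu$, and put $a=T'$. The change of variables gives $\log\frac{\dd\mu}{\dd\nu}(T(s))=T(s)-s-\log a(s)$. Integrating $\int(T-s)e^{-s}\dd s$ by parts, using that $T(s)-s$ is bounded by the tail behaviour of $\mu$, yields $\Ent(\mu\mid\nu)=\int\Phi(a)\,\dd\nu$. Next I write
\[
w(T(s))=\int_0^s h(\sigma)\,a(\sigma)^{1/p'}\dd\sigma,\qquad h:=(w'\circ T)\,a^{1/p},
\]
so that $\int|h|^p\dd\sigma=\int|w'|^p\dd t\leq1$.

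\emph{Hölder and Fenchel step.} I then pass to the unit interval by a substitution in which $\nu$ becomes Lebesgue measure, so $\dd x=e^{-s}\dd s$, with the primitive normalized at the endpoint where $w$ vanishes. Under it, $h$ becomes an $f\in L^p(0,1)$ with $\|f\|_p\leq1$, and H\"older's inequality applied to the weighted Hardy integral above bounds $|w\circ T|^{p'}$ pointwise by $a\cdot Z_f$ in the new variable. Integrating against $\nu$ and using the Fenchel inequality $za-\Phi(a)\leq\Phi^*(z)=-\log(1-z)$ pointwise gives
\[
\int|w\circ T|^{p'}\dd\nu-\int\Phi(a)\,\dd\nu\leq\int_0^1-\log\bigl(1-Z_f(x)\bigr)\dd x=\mathcal J_p(f)\leq\mathsf J_p .
\]
This is \eqref{eq:one-dimensional}.

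\emph{Main obstacle.} The delicate point is choosing the substitution and the normalization of $f$ so that H\"older produces exactly $a\,Z_f$ with the weight $1/x$. It is not yet settled that $\dd x=e^{-s}\dd s$, with the primitive taken from the endpoint where $w$ vanishes, does this. I would first try integrating $\int|W|^{p'}e^{-s}\dd s$ by parts, with $W=w\circ T$, and then applying H\"older to the inner integral with the weight $a^{1/p'}$, checking that the factor $e^{-s}$ recombines into $1/x$. If that weight does not come out directly, I would adjust by applying H\"older with the measure $a\,\dd\sigma$ before the change of variables. The remaining points are routine but need checking: finiteness of boundary terms, which holds because the reduced $w$ are constant near infinity, and the fact that $T$ is absolutely continuous with $a>0$ a.e., which follows from the positive density of $\mu$.
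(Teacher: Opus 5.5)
Your reduction, the Gibbs identity, the transport step with $\Ent(\mu\mid\nu)=\int\Phi(a)\,\dd\nu$, the factorization $w(T(s))=\int_0^s h\,a^{1/p'}$ and the closing Fenchel step match the paper's architecture (Proposition~\ref{prop:entropy-jacobian} and Theorem~\ref{thm:K-upper}). The gap is the step you flag as the main obstacle, and a better substitution cannot repair it. The pointwise bound $|w\circ T|^{p'}\le a\,Z_f$ fails in general, for every $f$ in the unit ball of $L^p(0,1)$. Since $Z_f\le1$, it would force $|w(T(s))|^{p'}\le a(s)$.

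Take $w(t)=t_0^{-1/p}\min\{t,t_0\}$ with $t_0>1$. It satisfies \eqref{eq:w-energy}, is bounded, and is constant after $t_0$. On $[t_0,\infty)$ the density of $\mu$ with respect to $\nu$ is $e^{t_0}/\mathcal Z$, where $\mathcal Z=\int e^{|w|^{p'}}\dd\nu$. Hence $T(s)=s+t_0-\log\mathcal Z$ and $a(s)=1$ for large $s$, whereas $|w(T(s))|^{p'}=t_0>1$. Moreover $Z_f(e^{-s})\to0$ as $s\to\infty$.

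The structural reason is that $w(T(s))$ depends on $a$ on all of $[0,s]$. Any H\"older estimate of that inner integral at fixed $s$ therefore controls it by integrals of $a$ over $[0,s]$, never by the single value $a(s)$. The natural such estimate is $|w(T(s))|^{p'}\le T(s)$, and after integration against $\nu$ this is the degenerate case $Z\equiv1$, where $\Phi^*=+\infty$. The function entering $\mathcal J_p$ is not a transform of the transported derivative $h=(w'\circ T)a^{1/p}$ at all.

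The missing idea is $L^{p'}(\nu)$--$L^p(\nu)$ duality followed by Fubini; this is Proposition~\ref{thm:tho}. With $W=w\circ T$, take the dual function $\phi=\operatorname{sgn}(W)|W|^{p'-1}/\lVert W\rVert_{L^{p'}(\nu)}^{p'-1}$, so that $\lVert\phi\rVert_{L^p(\nu)}=1$. Exchanging the order of integration in $W(s)=\int_0^s h\,a^{1/p'}$ gives
\[
 \lVert W\rVert_{L^{p'}(\nu)}=\int_0^\infty W\phi\,\dd\nu
 =\int_0^\infty h(r)a(r)^{1/p'}B(r)\,\dd r,
 \qquad
 B(r)=\int_r^\infty\phi(s)e^{-s}\,\dd s.
\]
Only at this point does H\"older in $r$, with $\lVert h\rVert_{L^p(\dd r)}\le1$, give the integrated bound $\lVert W\rVert_{L^{p'}(\nu)}^{p'}\le\int a\,e^r|B|^{p'}\,\dd\nu$.

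The substitution $x=e^{-r}$ is applied to the dual function. Then $f(x)=\phi(-\log x)$ satisfies $\lVert f\rVert_{L^p(0,1)}=\lVert\phi\rVert_{L^p(\nu)}=1$ and $e^r|B(r)|^{p'}=Z_f(x)$. The primitive is taken from $x=0$, that is, from $s=\infty$, the endpoint opposite to the one where $w$ vanishes. Fenchel's inequality applied pointwise to $a(r)Z_f(e^{-r})$ then gives your final display.

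Your integration-by-parts idea is the right instinct, but it must be applied to $\int W\phi\,\dd\nu$ with $\phi$ held fixed and only $W$ differentiated. Integrating $\int|W|^{p'}e^{-s}\,\dd s$ by parts as a whole does not produce $Z_f$. A minor point: the operations in your reduction do not make $w$ nondecreasing, which would require the running maximum of $|w|$. Monotonicity is not needed anyway.
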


\subsection{Finiteness and uniform integrability of the dual functional}
\label{sec:dhl}

We now work with an arbitrary $p>1$ and set $r_p:=\max\{2,p\}$.
The estimate below controls the logarithmic singularity of $\mathcal J_p$
through the stability of equality in H\"older's inequality.  It gives both
$\mathsf J_p<\infty$ and the uniform integrability needed for the existence
of maximizers.  We write $C_p$ for a finite constant depending only on $p$,
whose value may change from line to line.

\begin{lemma}\label{lem:holder-rigidity}
For every $p>1$ there is $C_p<\infty$ such that, on every probability
space, each nonnegative $g$ satisfying $\int g^p=1$ obeys
\begin{equation}\label{eq:holder-rigidity}
 \|g-1\|_{L^p}^{r_p}
 \leq C_p\left(1-\int g\right).
\end{equation}
\end{lemma}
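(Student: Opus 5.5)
The plan is to compare $1-\int g$ with the integral of the Bregman remainder of the convex function $s\mapsto s^p$ at $s=1$. Put
\[
 \varphi_p(s):=s^p-1-p(s-1),\qquad s\geq0 .
\]
Since $\int g^p=1$ and the space has total mass one, we get $\int\varphi_p(g)=1-1-p\bigl(\int g-1\bigr)=p\,\delta$, where $\delta:=1-\int g$. Hölder's inequality gives $\int g\leq(\int g^p)^{1/p}=1$, and $g\geq0$ gives $\int g\geq0$. Hence $0\leq\delta\leq1$. The whole task is therefore to bound $\|g-1\|_{L^p}^{r_p}$ by a multiple of $\int\varphi_p(g)$, and this will come from a pointwise lower bound for $\varphi_p$.

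\emph{Pointwise bound.} I claim that there is $c_p>0$ with
\[
 \varphi_p(s)\geq c_p\min\bigl\{|s-1|^2,\,|s-1|^p\bigr\}\qquad(s\geq0).
\]
\begin{itemize}
 \item On a neighbourhood of $s=1$: $\varphi_p(1)=\varphi_p'(1)=0$ and $\varphi_p''(1)=p(p-1)>0$, so $\varphi_p(s)\asymp|s-1|^2$.
 \item On $[0,1-\eta]$: $\varphi_p$ is continuous and positive, and $|s-1|\leq1$ there.
 \item As $s\to\infty$: $\varphi_p(s)\sim s^p\sim|s-1|^p$.
\end{itemize}
A compactness argument on the intermediate ranges, using strict convexity so that $\varphi_p>0$ for $s\neq1$, then gives the constant $c_p$. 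This elementary but fiddly step is where I expect the only real work to lie.

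\emph{Case $p\geq2$.} For $|s-1|\leq1$ we have $|s-1|^2\geq|s-1|^p$, and for $|s-1|\geq1$ the minimum is $|s-1|^2\geq|s-1|^p$ as well. So in both regimes the bound reads $\varphi_p(s)\geq c_p|s-1|^p$. Integrating gives
\[
 \|g-1\|_{L^p}^p\leq c_p^{-1}\,p\,\delta,
\]
which is \eqref{eq:holder-rigidity} with $r_p=p$.

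\emph{Case $1<p<2$.} Split the space into $A=\{|g-1|\leq1\}$ and $B=\{|g-1|>1\}$.
\begin{itemize}
 \item On $B$ the minimum equals $|g-1|^p$, so $\int_B|g-1|^p\leq C_p\delta$.
 \item On $A$ the minimum equals $|g-1|^2$, so $\int_A|g-1|^2\leq C_p\delta$. Since the measure is a probability, Hölder (Jensen) gives $\int_A|g-1|^p\leq(\int_A|g-1|^2)^{p/2}\leq C_p\delta^{p/2}$.
\end{itemize}
Because $0\leq\delta\leq1$ and $p/2<1$, we have $\delta\leq\delta^{p/2}$. Adding the two pieces yields $\|g-1\|_{L^p}^p\leq C_p\delta^{p/2}$. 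Raising both sides to the power $2/p$ gives $\|g-1\|_{L^p}^2\leq C_p\delta$, which is \eqref{eq:holder-rigidity} with $r_p=2$.

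Combining the two cases proves the lemma with $r_p=\max\{2,p\}$. The constant depends only on $p$, because $c_p$ does and no property of the probability space other than total mass one was used.
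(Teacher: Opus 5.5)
\textbf{One step is false as written, in the case $p\geq2$.} The overall structure is the paper's: the same Bregman remainder $\varphi_p=D_p$, the identity $\int\varphi_p(g)=p\delta$, a pointwise lower bound, and integration.

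For $p>2$ and $|s-1|>1$ one has $|s-1|^p>|s-1|^2$. So $\min\{|s-1|^2,|s-1|^p\}=|s-1|^2<|s-1|^p$, the reverse of what you wrote. Your min-bound therefore yields only $\varphi_p(s)\geq c_p|s-1|^2$ for large $s$. Integrating it controls $\int_{\{|g-1|>1\}}|g-1|^2$, not $\int|g-1|^p$, so $\|g-1\|_p^p\leq c_p^{-1}p\,\delta$ does not follow. The min-bound is true for every $p>1$, but for $p>2$ it discards exactly the growth you need.

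The repair is already in your list of regimes. Since $\varphi_p(s)\sim s^p$ at infinity, and $\varphi_p\asymp|s-1|^2\geq|s-1|^p$ for $|s-1|\leq1$, the same compactness argument gives, for $p\geq2$,
\[
\varphi_p(s)\geq c_p\max\{|s-1|^2,|s-1|^p\}\geq c_p|s-1|^p \quad\text{on } [0,\infty).
\]
The uniform statement is $\varphi_p(s)\asymp|s-1|^2(1+s)^{p-2}$. This is comparable to the minimum when $p\leq2$ and to the maximum when $p\geq2$. With the corrected bound, your case $p\geq2$ is exactly the paper's argument, which asserts $D_p(s)\geq c_p|s-1|^p$ directly.

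\textbf{The case $1<p<2$ is correct, by a different route.} The paper uses the equivalent pointwise bound $\varphi_p(s)\geq c_p|s-1|^2(1+s)^{p-2}$ and a single weighted H\"older inequality,
\[
\|g-1\|_p^2\leq\Bigl(\int|g-1|^2(1+g)^{p-2}\Bigr)\Bigl(\int(1+g)^p\Bigr)^{(2-p)/p}.
\]
The last factor is at most $2^{2-p}$ by $\int g^p=1$. This gives $\|g-1\|_p^2\leq C_p\int\varphi_p(g)$ with no case split and no use of $\delta\leq1$.

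Your route splits into $\{|g-1|\leq1\}$ and $\{|g-1|>1\}$, applies Jensen on the first set, and absorbs $\delta\leq\delta^{p/2}$. It reaches the same estimate. The cost is that it needs the a priori bound $0\leq\delta\leq1$, which you derive correctly.
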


\begin{proof}
Let $D_p(s)=s^p-1-p(s-1)$.  If $p\geq2$, then
$D_p(s)\geq c_p|s-1|^p$ for $s\geq0$: the relevant quotient has positive
limits at zero and infinity and has limit $+\infty$ at $s=1$ when
$p>2$.  Integration gives \eqref{eq:holder-rigidity}.

If $1<p<2$, the pointwise estimate
\[
 D_p(s)\geq c_p\frac{|s-1|^2}{(1+s)^{2-p}}
\]
follows in the same way by inspecting the quotient at zero, one, and
infinity.  H\"older's inequality then gives
\begin{align*}
 \|g-1\|_p^2
 &\leq
 \left(\int\frac{|g-1|^2}{(1+g)^{2-p}}\right)
 \left(\int(1+g)^p\right)^{(2-p)/p}\\
 &\leq C_p\int D_p(g)
 =C_pp\left(1-\int g\right),
\end{align*}
which is \eqref{eq:holder-rigidity} with $r_p=2$.
\end{proof}

\begin{lemma}\label{lem:dhl}
For every $p>1$ there is $C_p<\infty$ such that, for every
$f\in L^p(0,1)$ with $\|f\|_p\leq1$,
\begin{equation}\label{eq:dhl}
 \int_0^1-\log\left(
  1-\frac{|\int_0^x f(y)\,\dd y|^{p'}}{x}
 \right)\,\dd x\leq C_p.
\end{equation}
More precisely, with $Z_f(x):=|\int_0^x f|^{p'}/x$,
\begin{equation}\label{eq:level-measure}
 |\{Z_f>1-\varepsilon\}|
 \leq C_p\varepsilon^{1/r_p},
 \qquad 0<\varepsilon<\frac12.
\end{equation}
Consequently the family
$\{-\log(1-Z_f):\|f\|_p\leq1\}$ is uniformly integrable in $L^1(0,1)$.
\end{lemma}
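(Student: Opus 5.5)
The plan is to derive the level-set estimate \eqref{eq:level-measure} from Lemma~\ref{lem:holder-rigidity}, applied on each interval $(0,x)$ with $x$ in the superlevel set. Then \eqref{eq:dhl} and uniform integrability follow by the layer-cake formula. We may replace $f$ by $|f|$, since this only increases $Z_f$ and keeps $\|f\|_p\leq1$.

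\emph{Step 1: near-equality in H\"older on $(0,x)$.} Fix $x\in(0,1]$ with $Z_f(x)>1-\varepsilon$ and put $m_x:=(\int_0^x|f|^p)^{1/p}\leq1$. By H\"older, $|F(x)|\leq x^{1/p'}m_x$, so $Z_f(x)\leq m_x^{p'}$. This gives $m_x\geq(1-\varepsilon)^{1/p'}\geq1-\varepsilon$, hence $\int_x^1|f|^p\leq 1-m_x^p\leq C_p\varepsilon$. On the probability space $((0,x),\dd y/x)$ set $g:=x^{1/p}|f|/m_x$, so that $\int g^p=1$. Then
\[
\int g\,\frac{\dd y}{x}=\frac{|F(x)|}{x^{1/p'}m_x}\geq Z_f(x)^{1/p'}\geq(1-\varepsilon)^{1/p'}\geq1-\varepsilon,
\]
and Lemma~\ref{lem:holder-rigidity} yields $\|g-1\|_{L^p(\dd y/x)}\leq C_p\varepsilon^{1/r_p}$. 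Unwinding the normalization and using $|m_x-1|\leq\varepsilon$, we get
\[
\bigl\||f|-x^{-1/p}\bigr\|_{L^p(0,x)}\leq C_p\varepsilon^{1/r_p}.
\]
In words, on $(0,x)$ the function $|f|$ is $L^p$-close to the constant $x^{-1/p}$.

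\emph{Step 2: the superlevel set is short.} Let $a<b$ be the infimum and supremum of $S_\varepsilon:=\{Z_f>1-\varepsilon\}$, taking $a_k\downarrow a$ and $b_k\uparrow b$ in $S_\varepsilon$ if they are not attained. Apply Step~1 at both $a_k$ and $b_k$ and restrict the second estimate to $(0,a_k)$. The triangle inequality gives
\[
a_k^{1/p}\bigl(a_k^{-1/p}-b_k^{-1/p}\bigr)=1-(a_k/b_k)^{1/p}\leq C_p\varepsilon^{1/r_p}.
\]
Hence $b_k-a_k\leq b_k\bigl(1-a_k/b_k\bigr)\leq p\,(1-(a_k/b_k)^{1/p})\leq C_p\varepsilon^{1/r_p}$, using $1-t\leq p(1-t^{1/p})$ for $t\in[0,1]$. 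Letting $k\to\infty$ gives $|S_\varepsilon|\leq b-a\leq C_p\varepsilon^{1/r_p}$, which is \eqref{eq:level-measure}. The case where $S_\varepsilon$ is empty or a single point is trivial.

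\emph{Step 3: integrability.} For $s\geq\log2$ we have $\{-\log(1-Z_f)>s\}=S_{e^{-s}}$, whose measure is at most $C_pe^{-s/r_p}$ by Step~2. The layer-cake formula then bounds $\int_0^1-\log(1-Z_f)$ by $\log2+C_p\int_0^\infty e^{-s/r_p}\,\dd s$, which gives \eqref{eq:dhl}. The same tail bound, uniform in $f$, gives
\[
\int_{\{-\log(1-Z_f)>M\}}-\log(1-Z_f)\leq C_p\int_M^\infty e^{-s/r_p}\,\dd s+M\,C_pe^{-M/r_p}\to0
\]
as $M\to\infty$ uniformly over the unit ball, which is the uniform integrability.

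I expect Step~2 to be the main obstacle. A single point of the level set gives no information about its measure; the key is to compare the rigidity estimates at two points, which pins the constant level $x^{-1/p}$ and forces the two points to be close. One must also check that the normalizations by $m_x$ cost only $O(\varepsilon)$, which is dominated by $\varepsilon^{1/r_p}$.
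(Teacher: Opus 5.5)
Your proof is correct and follows the paper's argument. It applies Lemma~\ref{lem:holder-rigidity} on each $(0,x)$ in the superlevel set, compares two such points to get $\operatorname{diam}\{Z_f>1-\varepsilon\}\leq C_p\varepsilon^{1/r_p}$, and finishes with the layer-cake formula. The only difference is bookkeeping: you compare $|f|$ directly with the fixed constant $x^{-1/p}$ and absorb the $O(\varepsilon)$ normalization error, whereas the paper compares with $c_x=(P(x)/x)^{1/p}$ and then controls $P(y)^{1/p}-P(x)^{1/p}$ separately.
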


\begin{proof}
Replacing $f$ by $|f|$, assume $f\geq0$, and set
\[
 F(x):=\int_0^x f(y)\,\dd y,
 \qquad P(x):=\int_0^x f(y)^p\,\dd y.
\]
H\"older's inequality yields
\begin{equation}\label{eq:Z-bound}
 0\leq Z_f(x)\leq P(x)^{1/(p-1)}\leq1.
\end{equation}
Let $A_\varepsilon=\{Z_f>1-\varepsilon\}$.  If
$x\in A_\varepsilon$, then
\begin{equation}\label{eq:P-near-one}
 P(x)>(1-\varepsilon)^{p-1}.
\end{equation}
On the probability space $((0,x),\dd y/x)$ define
\[
 c_x:=\left(\frac{P(x)}x\right)^{1/p},
 \qquad g_x:=\frac{f}{c_x}.
\]
Then $\int g_x^p\,\dd y/x=1$.  Let
\[
 \theta_x:=\frac{F(x)}{x^{1/p'}P(x)^{1/p}};
\]
since $\theta_x^{p'}=Z_f(x)/P(x)^{1/(p-1)}\geq Z_f(x)$, we have
$1-\theta_x\leq C_p\varepsilon$.  Since
$x^{1/p}c_x=P(x)^{1/p}\leq1$, Lemma~\ref{lem:holder-rigidity} gives
\begin{equation}\label{eq:near-constant}
 \|f-c_x\|_{L^p(0,x)}
 =P(x)^{1/p}\|g_x-1\|_{L^p(\dd y/x)}
 \leq C_p\varepsilon^{1/r_p}.
\end{equation}

If $0<x<y<1$ both lie in $A_\varepsilon$, then
\[
 x^{1/p}|c_x-c_y|
 \leq\|f-c_x\|_{L^p(0,x)}+\|f-c_y\|_{L^p(0,x)}
 \leq C_p\varepsilon^{1/r_p}.
\]
Substituting the definitions of $c_x$ and $c_y$ gives
\[
 \left|
 P(x)^{1/p}-P(y)^{1/p}\left(\frac xy\right)^{1/p}
 \right|
 \leq C_p\varepsilon^{1/r_p}.
\]
By \eqref{eq:P-near-one} and $P(x)\leq P(y)\leq1$,
\[
 0\leq P(y)^{1/p}-P(x)^{1/p}
 \leq1-(1-\varepsilon)^{(p-1)/p}\leq C_p\varepsilon.
\]
Since $P(y)^{1/p}\geq P(x)^{1/p}\geq(1-\varepsilon)^{(p-1)/p}\geq\frac12$, the
triangle inequality gives
\begin{align*}
 P(y)^{1/p}\Bigl(1-\Bigl(\frac xy\Bigr)^{1/p}\Bigr)
 &\leq\bigl|P(y)^{1/p}-P(x)^{1/p}\bigr|
 +\Bigl|P(x)^{1/p}-P(y)^{1/p}\Bigl(\frac xy\Bigr)^{1/p}\Bigr|\\
 &\leq C_p\varepsilon^{1/r_p},
\end{align*}
so that $1-(x/y)^{1/p}\leq C_p\varepsilon^{1/r_p}$.  Finally,
$1-\sigma^p\leq p(1-\sigma)$ for $\sigma\in[0,1]$, and therefore
\[
 y-x\leq1-\frac xy\leq p\Bigl(1-\Bigl(\frac xy\Bigr)^{1/p}\Bigr)
 \leq C_p\varepsilon^{1/r_p}.
\]
Hence $\operatorname{diam}A_\varepsilon\leq C_p\varepsilon^{1/r_p}$, which proves
\eqref{eq:level-measure}.

For $Y_f=-\log(1-Z_f)$, \eqref{eq:level-measure} implies
\[
 |\{Y_f>t\}|\leq C_pe^{-t/r_p},\qquad t\geq\log2.
\]
The layer cake formula gives \eqref{eq:dhl}, and
\[
 \int_{\{Y_f>A\}}Y_f
 \leq A|\{Y_f>A\}|+\int_A^\infty|\{Y_f>t\}|\,\dd t
 \longrightarrow0
\]
uniformly on the unit ball.  This is the asserted uniform integrability.
\end{proof}

In particular, the supremum in \eqref{eq:KDHL-def} is finite.

\begin{remark}\label{rem:dhl-mechanism}
The proof of Lemma~\ref{lem:dhl} does not use the Moser--Trudinger
inequality; its only input is the stability of the equality case in
H\"older's inequality.  Estimate \eqref{eq:level-measure} says that $Z_f$
can be close to one only on a short interval.  By
Theorem~\ref{thm:exact-duality}, the finiteness of $\mathsf J_p$ is
equivalent to Moser's one-dimensional lemma \cite{Moser1971}, to its
extension to real exponents by Jodeit \cite{Jodeit1972}, and to the case
of first derivatives of Adams's lemma \cite[Lemma~1]{Adams1988}.
Lemma~\ref{lem:dhl} works with the dual variable, and it gives in
addition the uniform integrability that is used in
Section~\ref{sec:dual-extremals}.
\end{remark}

\section{The dual variational principle}
\label{sec:duality}

Throughout this section $p>1$ and $p'=p/(p-1)$.  We prove
$\mathsf K_p=\mathsf J_p$, with the constants defined in
\eqref{eq:intro-Kp} and \eqref{eq:KDHL-def}.  The weighted Hardy estimate,
combined with the entropy identity for monotone transport, gives
$\mathsf K_p\leq\mathsf J_p$.  Choosing the weights and functions that
realize equality in the underlying convex and H\"older inequalities gives
the reverse bound.  Keeping the remainders yields the deficit identity,
and the case $p=n$ then gives \eqref{eq:intro-mt}.

\subsection{The weighted Hardy estimate}
\label{sec:tho}

On $[0,\infty)$ let
\begin{equation}\label{eq:nu-Phi}
  \dd\nu(s):=e^{-s}\,\dd s,
  \qquad
  \Phi(a):=a-1-\log a,
  \quad a>0.
\end{equation}
For a positive measurable weight $a$, define
\begin{equation}\label{eq:weighted-Hardy}
  H_ag(s):=\int_0^s g(r)a(r)^{1/p'}\,\dd r,
\end{equation}
and for $h\in L^p(\nu)$ put
\begin{equation}\label{eq:zh}
  B_h(r):=\int_r^\infty h(s)e^{-s}\,\dd s,
  \qquad
  z_h(r):=e^r\lvert B_h(r)\rvert^{p'}.
\end{equation}

\begin{proposition}\label{thm:tho}
Let $a$ be a measurable function on $[0,\infty)$ with
\[
  0<m\leq a(s)\leq M<\infty
  \quad\text{for almost every }s.
\]
Then, for every $g\in L^p(0,\infty)$ with
$\lVert g\rVert_{L^p(\dd s)}\leq1$,
\begin{equation}\label{eq:tho}
  \lVert H_ag\rVert_{L^{p'}(\nu)}^{p'}
  \leq\int_0^\infty\Phi(a)\,\dd\nu+\mathsf J_p,
\end{equation}
where $\mathsf J_p$ is the optimal constant in
\eqref{eq:KDHL-def}.
\end{proposition}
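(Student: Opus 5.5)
The plan is to combine $L^p$ duality with Fubini, H\"older, the Fenchel inequality for $\Phi$, and a change of variables that turns the remaining integral into $\mathcal J_p$.

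\emph{Step 1: duality and Fubini.} Since $a\leq M$ and $\|g\|_p\leq1$, H\"older gives $|H_ag(s)|\leq M^{1/p'}s^{1/p'}$. Hence $H_ag\in L^{p'}(\nu)$ and all quantities are finite. By $L^p$–$L^{p'}$ duality on $(0,\infty)$ with the measure $\nu$,
\[
 \|H_ag\|_{L^{p'}(\nu)}=\sup\Bigl\{\int_0^\infty H_ag\,h\,\dd\nu:\ \|h\|_{L^p(\nu)}\leq1\Bigr\}.
\]
For a fixed such $h$, Fubini (justified by running the same computation with $|g|,|h|$) gives
\[
 \int_0^\infty H_ag\,h\,\dd\nu=\int_0^\infty g(r)a(r)^{1/p'}B_h(r)\,\dd r .
\]
H\"older's inequality with $\|g\|_{L^p(\dd r)}\leq1$ then bounds this by
\[
 \Bigl(\int_0^\infty a\,|B_h|^{p'}\dd r\Bigr)^{1/p'}=\Bigl(\int_0^\infty a\,z_h\,\dd\nu\Bigr)^{1/p'} .
\]
It therefore suffices to show $\int a z_h\,\dd\nu\leq\int\Phi(a)\,\dd\nu+\mathsf J_p$ for every $h$ in the unit ball of $L^p(\nu)$.

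\emph{Step 2: Fenchel.} The Legendre transform of $\Phi$ is $\Phi^*(z)=\sup_{a>0}(az-\Phi(a))$. For $z<1$ the supremum is attained at $a=1/(1-z)$, and a short calculation gives $\Phi^*(z)=-\log(1-z)$; set $\Phi^*(z)=+\infty$ for $z\geq1$. Young's inequality $az\leq\Phi(a)+\Phi^*(z)$ then gives, pointwise,
\[
 a\,z_h\leq\Phi(a)+\bigl(-\log(1-z_h)\bigr).
\]
Here $\Phi(a)$ is bounded because $m\leq a\leq M$, so integrating against $\nu$ is legitimate in $[0,\infty]$.

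\emph{Step 3: identification with $\mathcal J_p$.} Substitute $x=e^{-r}$, $y=e^{-s}$, and put $f(y):=h(-\log y)$. Then $\dd\nu(s)$ becomes $\dd y$ on $(0,1)$, so $\|f\|_{L^p(0,1)}=\|h\|_{L^p(\nu)}\leq1$. Moreover $B_h(r)=\int_0^x f(y)\,\dd y$, and therefore $z_h(r)=Z_f(x)$. In particular $z_h\leq1$ by \eqref{eq:Z-bound}. The same substitution gives
\[
 \int_0^\infty-\log(1-z_h)\,\dd\nu=\mathcal J_p(f)\leq\mathsf J_p ,
\]
which is finite by Lemma~\ref{lem:dhl}. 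Combining Steps~1–3 and taking the supremum over $h$ yields \eqref{eq:tho}.

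There is no deep obstacle here. The only points needing care are the measure-theoretic justifications in Step~1 (finiteness of the dual norm, and Fubini via absolute integrability using $a\leq M$) and checking that the change of variables preserves the $L^p$ norm exactly, since this is what produces the constant $\mathsf J_p$ with no loss.
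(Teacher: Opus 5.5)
Your proposal is correct and follows the paper's proof essentially step for step: $L^p$ duality and Fubini, H\"older to reach $\int a z_h\,\dd\nu$, the Fenchel inequality with $\Phi^*(z)=-\log(1-z)$, and the substitution $x=e^{-r}$ identifying $\int-\log(1-z_h)\,\dd\nu$ with $\mathcal J_p(f)\leq\mathsf J_p$. The only difference is cosmetic: you handle possible points with $z_h=1$ through the extended-value convention $\Phi^*=+\infty$, whereas the paper first notes that $z_h<1$ almost everywhere because $\int-\log(1-z_h)\,\dd\nu$ is finite.
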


\begin{proof}
The upper bound on $a$ and H\"older's inequality imply
$\lvert H_ag(s)\rvert^{p'}\leq Ms$, so $H_ag\in L^{p'}(\nu)$.  Let
$h\in L^p(\nu)$ with $\lVert h\rVert_{L^p(\nu)}\leq1$.  Fubini's theorem
gives
\begin{equation}\label{eq:tho-fubini}
  \int_0^\infty H_ag(s)h(s)\,\dd\nu(s)
  =\int_0^\infty g(r)a(r)^{1/p'}B_h(r)\,\dd r.
\end{equation}
H\"older's inequality with exponents $p$ and $p'$ yields
\begin{equation}\label{eq:tho-holder}
  \left\lvert\int H_ag\,h\,\dd\nu\right\rvert^{p'}
  \leq\int_0^\infty a(r)z_h(r)\,\dd\nu(r).
\end{equation}
Another application of H\"older's inequality gives
\begin{equation}\label{eq:zh-bound}
  0\leq z_h(r)
  \leq\left(\int_r^\infty\lvert h(s)\rvert^p e^{-s}\,\dd s
       \right)^{1/(p-1)}
  \leq1.
\end{equation}

Set $x=e^{-r}$ and $f(x)=h(-\log x)$.  Then
\[
  \int_0^1\lvert f(x)\rvert^p\,\dd x
  =\int_0^\infty\lvert h(r)\rvert^p\,\dd\nu(r)\leq1,
  \qquad
  z_h(r)
  =\frac{\left\lvert\int_0^x f(y)\,\dd y\right\rvert^{p'}}{x}=Z_f(x).
\]
Lemma~\ref{lem:dhl} therefore implies
\begin{equation}\label{eq:zh-log}
  \int_0^\infty-\log(1-z_h)\,\dd\nu
  =\int_0^1-\log\bigl(1-Z_f(x)\bigr)\,\dd x\leq\mathsf J_p.
\end{equation}

The convex conjugate of $\Phi$ is, for $0\leq z<1$,
\begin{equation}\label{eq:Phi-star}
  \Phi^*(z)
  :=\sup_{a>0}\{az-\Phi(a)\}
  =-\log(1-z).
\end{equation}
Indeed, the maximizer is $a=(1-z)^{-1}$.  By
\eqref{eq:zh-log}, $z_h<1$ almost everywhere.  Fenchel's inequality gives
\[
  a z_h\leq\Phi(a)-\log(1-z_h).
\]
Combining this with \eqref{eq:tho-holder} and \eqref{eq:zh-log}, we obtain
\[
  \left\lvert\int H_ag\,h\,\dd\nu\right\rvert^{p'}
  \leq\int\Phi(a)\,\dd\nu+\mathsf J_p.
\]
Taking the supremum over the unit ball of $L^p(\nu)$ proves
\eqref{eq:tho}.
\end{proof}

The entropy term in \eqref{eq:tho} is exactly the one produced by
monotone transport of the exponential measure, as we show next.

\subsection{Gibbs measures and monotone transport}
\label{sec:gibbs}

We prove $\mathsf K_p\leq\mathsf J_p$ using the Hardy estimate and an
identity for the relative entropy.  The case $p=n$ gives
Theorem~\ref{thm:one-dimensional}.

\begin{proposition}\label{prop:entropy-jacobian}
Let $\mu=r\nu$ be a probability measure on $[0,\infty)$ satisfying
\begin{equation}\label{eq:r-bounds}
  e^{-M}\leq r(t)\leq e^M
  \quad\text{for almost every }t
\end{equation}
for some $M<\infty$.  Let $T$ be the nondecreasing map satisfying
$T_\#\nu=\mu$, that is, $T=G_\mu^{-1}\circ G_\nu$ with the tail
distribution functions $G_\rho(t)=\rho([t,\infty))$.  Then $T$ is locally
absolutely continuous, $T(0)=0$, and, with $a=T'$, one has
\begin{equation}\label{eq:T-bounds}
  \lvert T(s)-s\rvert\leq M,
  \qquad
  e^{-2M}\leq a(s)\leq e^{2M}\quad\text{a.e.}
\end{equation}
Moreover,
\begin{equation}\label{eq:entropy-jacobian}
  \Ent(\mu\mid\nu)
  =\int_0^\infty\Phi(a(s))\,\dd\nu(s).
\end{equation}
\end{proposition}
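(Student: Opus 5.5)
The plan is to work entirely with tail distribution functions, since both $\nu$ and $\mu$ have positive densities and the monotone map is then explicit. Put $G_\nu(t)=e^{-t}$ and $G_\mu(t)=\int_t^\infty r(u)e^{-u}\,\dd u$. Because $r\geq e^{-M}>0$ a.e., $G_\mu$ is a strictly decreasing, locally Lipschitz bijection $[0,\infty)\to(0,1]$, with $G_\mu(0)=1$ because $\mu$ is a probability measure. Hence $T=G_\mu^{-1}\circ G_\nu$ is well defined, strictly increasing and continuous, and $T(0)=0$. The defining relation is
\[
 G_\mu(T(s))=e^{-s},
\]
and it gives $T_\#\nu=\mu$ at once: $\nu(\{T\geq t\})=\nu([T^{-1}(t),\infty))=G_\mu(t)$.

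Next I would derive the bounds in \eqref{eq:T-bounds}.
\begin{itemize}
\item From \eqref{eq:r-bounds}, $e^{-M}e^{-t}\leq G_\mu(t)\leq e^Me^{-t}$. Evaluating at $t=T(s)$ gives $e^{-M-T(s)}\leq e^{-s}\leq e^{M-T(s)}$, that is, $|T(s)-s|\leq M$.
\item For absolute continuity I would use the inverse map $S=T^{-1}=-\log G_\mu$. This map is locally absolutely continuous with $S'(t)=r(t)e^{-t}/G_\mu(t)$ a.e.
\item By the two-sided bounds on $G_\mu$, this gives $e^{-2M}\leq S'\leq e^{2M}$ a.e. So $S$ is bi-Lipschitz.
\item Therefore $T$ is locally Lipschitz, hence locally absolutely continuous, with $a(s)=T'(s)=1/S'(T(s))\in[e^{-2M},e^{2M}]$ a.e.
\end{itemize}
In the last step I must make sure that "a.e." transfers correctly. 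This holds because bi-Lipschitz maps preserve null sets.

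For the entropy identity, differentiating the defining relation gives the Jacobian equation
\[
 r(T(s))\,e^{-T(s)}\,a(s)=e^{-s}\quad\text{a.e.,}
\]
which I would write as $\log r(T(s))=T(s)-s-\log a(s)$. Since $T_\#\nu=\mu$,
\[
 \Ent(\mu\mid\nu)=\int\log r\,\dd\mu=\int\log r(T(s))\,\dd\nu(s)=\int\bigl(T(s)-s-\log a(s)\bigr)\,\dd\nu(s),
\]
and every term is integrable by the bounds above. The identity then comes down to showing
\[
 \int(T(s)-s)\,\dd\nu=\int(a-1)\,\dd\nu.
\]
This is exactly where the exponential reference measure is used. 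Integrate by parts: $\int_0^\infty (T-s)'e^{-s}\,\dd s=[(T-s)e^{-s}]_0^\infty+\int(T-s)e^{-s}\,\dd s$. The boundary term vanishes because $T(0)=0$ and $|T(s)-s|\leq M$. Substituting this gives $\int(a-1-\log a)\,\dd\nu=\int\Phi(a)\,\dd\nu$, which is \eqref{eq:entropy-jacobian}.

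I expect the main obstacle to be justifying the Jacobian equation and the change of variables rigorously, since $r$ is only measurable. The plan is to rely on the bi-Lipschitz inverse $S$: $G_\mu$ is absolutely continuous, and the chain rule for compositions of Lipschitz maps with strictly monotone bi-Lipschitz maps holds a.e. This avoids any differentiability of $r$ itself.
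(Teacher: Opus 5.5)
Your proposal is correct and follows essentially the same route as the paper: the tail estimate $e^{-M}e^{-t}\leq G_\mu(t)\leq e^{M}e^{-t}$ evaluated at $t=T(s)$, the Jacobian relation $\log r(T(s))=T(s)-s-\log a(s)$, and integration by parts against $e^{-s}$ with the boundary term removed by $T(0)=0$ and $|T(s)-s|\leq M$. The only difference is that you prove absolute continuity explicitly through the bi-Lipschitz inverse $S=-\log G_\mu$, with $S'=re^{-t}/G_\mu\in[e^{-2M},e^{2M}]$; the paper instead cites the one-dimensional inverse-function theorem for monotone absolutely continuous maps and reads the bounds on $a$ off the Jacobian relation.
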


\begin{proof}
The monotone map is defined by matching distribution functions.  Since the
source and target densities are positive and locally bounded above and
below, the standard one-dimensional inverse-function theorem for
absolutely continuous monotone functions shows that $T$ is locally
absolutely continuous and satisfies the Jacobian relation
\begin{equation}\label{eq:jacobian}
  e^{-s}=r(T(s))e^{-T(s)}T'(s)
  \quad\text{for almost every }s.
\end{equation}
The tail estimate following from \eqref{eq:r-bounds} is
\[
  e^{-M}e^{-t}\leq\mu([t,\infty))\leq e^Me^{-t}.
\]
Since monotonicity and the pushforward relation give
$\mu([T(s),\infty))=e^{-s}$, we obtain
$\lvert T(s)-s\rvert\leq M$.  Equation \eqref{eq:jacobian} then yields the
bounds for $a=T'$ in \eqref{eq:T-bounds}.

Taking logarithms in \eqref{eq:jacobian},
\begin{equation}\label{eq:log-r}
  \log r(T(s))=T(s)-s-\log a(s).
\end{equation}
Consequently,
\[
  \Ent(\mu\mid\nu)
  =\int_0^\infty\bigl(T(s)-s-\log a(s)\bigr)e^{-s}\,\dd s.
\]
We have $T(0)=0$, and the first bound in \eqref{eq:T-bounds} implies
$T(s)e^{-s}\to0$ as $s\to\infty$.  Integration by parts therefore gives
\[
  \int_0^\infty T(s)e^{-s}\,\dd s
  =\int_0^\infty a(s)e^{-s}\,\dd s.
\]
Since
$\int_0^\infty s e^{-s}\,\dd s=\int_0^\infty e^{-s}\,\dd s=1$,
formula \eqref{eq:entropy-jacobian} follows.
\end{proof}

\begin{remark}\label{rem:bregman}
Let $I=(\ell_-,\ell_+)$, let $V\in C^1(I)$ with
$\dd\nu_V=e^{-V}\dd x$ a probability measure, and let $T:I\to I$ be
strictly increasing and locally absolutely continuous, with $T'>0$
almost everywhere and $T_\#\nu_V=\mu$.  If $V(T)-V$,
$V'(T-\mathrm{id})$, $T'-1$, and $\log T'$ belong to $L^1(\nu_V)$,
the change-of-variables formula and integration by parts give
\[
 \begin{aligned}
 \Ent(\mu\mid\nu_V)
 &=\int_I\bigl[V(T)-V-V'\,(T-\mathrm{id})\bigr]\,\dd\nu_V
   +\int_I\Phi(T')\,\dd\nu_V\\
 &\quad-\bigl[(T(x)-x)e^{-V(x)}\bigr]_{\ell_-}^{\ell_+}.
 \end{aligned}
\]
The boundary values are one-sided limits, which exist because the
derivative of $(T-\mathrm{id})e^{-V}$ is integrable.  The first integral
is the Bregman divergence of $V$; it is nonnegative when $V$ is convex,
and for the Gaussian measure it appears in \cite{CE2002}.  For the
exponential law $V(s)=s$ it vanishes, and for the maps of
Proposition~\ref{prop:entropy-jacobian} the boundary term vanishes as
well, because $T(0)=0$ and $|T(s)-s|\leq M$.  This gives
\eqref{eq:entropy-jacobian}.  Below we use only this identity and the
facts that $T$ is increasing, bi-Lipschitz and pushes $\nu$ to $\mu$; the
optimality of $T$ for a transport cost plays no role.
\end{remark}

\begin{theorem}\label{thm:K-upper}
For every $p>1$ and every locally absolutely continuous $w$ on $[0,\infty)$
with $w(0)=0$ and $\lVert w'\rVert_{L^p(0,\infty)}\leq1$,
\[
  \log\int_0^\infty e^{\lvert w(t)\rvert^{p'}}\,\dd\nu(t)
  \leq\mathsf J_p .
\]
Equivalently, $\mathsf K_p\leq\mathsf J_p$.
\end{theorem}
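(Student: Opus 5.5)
The proof combines the Gibbs variational principle, Proposition~\ref{prop:entropy-jacobian}, and Proposition~\ref{thm:tho}. First reduce to $w$ with $\int e^{|w|^{p'}}\dd\nu<\infty$; otherwise truncate. Replace $w$ by $w_N(t)=w(\min\{t,N\})$: this keeps $w_N(0)=0$ and $\|w_N'\|_p\le\|w'\|_p\le1$, and $|w_N|^{p'}$ is bounded on $[0,N]$ and constant beyond $N$, so $\int e^{|w_N|^{p'}}\dd\nu<\infty$. By monotone convergence in $N$ (split the integral at $N$, noting $e^{|w(N)|^{p'}}e^{-N}\geq0$), it suffices to prove the bound for $w_N$. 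Further truncate the values, $\phi_L=\min\{|w_N|^{p'},L\}$, to get bounded densities. Also pass from $\log\int e^{\phi_L}$ to the original functional by monotone convergence in $L$.

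The main step handles bounded $\phi=\phi_L\le|w_N|^{p'}$. Let $\mu=e^{\phi}\nu/Z$ with $Z=\int e^\phi\dd\nu$; its density $r=e^\phi/Z$ lies between $e^{-L}$ and $e^{L}$ since $Z\in[1,e^L]$. Gibbs gives the identity $\log Z=\int\phi\,\dd\mu-\Ent(\mu\mid\nu)$, which is exact at this measure. Proposition~\ref{prop:entropy-jacobian} supplies the monotone map $T$ with $T_\#\nu=\mu$, $T(0)=0$, $a=T'\in[e^{-2M},e^{2M}]$, and $\Ent(\mu\mid\nu)=\int\Phi(a)\dd\nu$. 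Then
\[
\int\phi\,\dd\mu=\int\phi(T(s))\,\dd\nu(s)\le\int|w_N(T(s))|^{p'}\dd\nu(s).
\]
Since $T$ is locally absolutely continuous and increasing with $T(0)=0$, and $w_N$ is locally absolutely continuous, the chain rule for this composition gives $w_N(T(s))=\int_0^s w_N'(T(r))a(r)\,\dd r$. This is $H_ag(s)$ with $g=w_N'(T)a^{1/p}$. The change of variables $t=T(r)$ gives $\|g\|_p^p=\int|w_N'(T(r))|^pa(r)\dd r=\|w_N'\|_p^p\le1$.

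Proposition~\ref{thm:tho} therefore yields $\int|H_ag|^{p'}\dd\nu\le\int\Phi(a)\dd\nu+\mathsf J_p$. Substituting,
\[
\log Z\le\int\Phi(a)\dd\nu+\mathsf J_p-\int\Phi(a)\dd\nu=\mathsf J_p,
\]
and letting $L\to\infty$ and then $N\to\infty$ finishes. The equivalence with $\mathsf K_p\le\mathsf J_p$ follows by taking the supremum over admissible $w$.

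The main technical point is the chain rule $ (w_N\circ T)'=w_N'(T)T'$ together with the change of variables in $\|g\|_p$. Because $T$ is bi-Lipschitz on compacts, it maps null sets to null sets, so the composition is locally absolutely continuous and both formulas hold a.e. One should also check that Gibbs' identity applies, which needs $\phi$ bounded; the truncation ensures this. Finally, the bound $|T(s)-s|\le M$ guarantees that the integrals involved converge.
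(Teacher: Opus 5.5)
Your proof is correct and follows the paper's argument: you take the Gibbs measure of a value-truncated potential, apply the monotone transport and entropy identity of Proposition~\ref{prop:entropy-jacobian}, pull back via $w\circ T=H_ag$, invoke Proposition~\ref{thm:tho}, and finish by monotone convergence. The differences are cosmetic. The paper truncates $w$ itself to $w_M$, so that $|w_M|^{p'}=\min\{|w|^{p'},M\}$ and the moment identity is an equality, whereas you truncate only the potential and apply the Hardy bound to the untruncated $w_N\circ T$ using $\phi\le|w_N|^{p'}$. Your cutoff in $t$ is harmless but unnecessary, since the value truncation already makes $Z$ finite (and the bounds on $a$ should read $e^{\pm 2L}$).
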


\begin{proof}
For $M>0$, define the truncation
\begin{equation}\label{eq:wM}
  w_M(t):=\operatorname{sgn}(w(t))
  \min\{\lvert w(t)\rvert,M^{1/p'}\}.
\end{equation}
Then $w_M(0)=0$,
\begin{equation}\label{eq:wM-energy}
  \int_0^\infty\lvert w_M'(t)\rvert^p\,\dd t\leq1,
  \qquad
  \lvert w_M(t)\rvert^{p'}\leq M,
\end{equation}
and hence
\[
  \mathcal Z_M:=\int_0^\infty e^{\lvert w_M(t)\rvert^{p'}}\,\dd\nu(t)
\]
is finite.  Define the Gibbs probability measure
\begin{equation}\label{eq:Gibbs}
  \dd\mu_M(t):=r_M(t)\,\dd\nu(t),
  \qquad
  r_M(t):=\frac{e^{\lvert w_M(t)\rvert^{p'}}}{\mathcal Z_M}.
\end{equation}
Because $1\leq \mathcal Z_M\leq e^M$, one has
\begin{equation}\label{eq:rM-bound}
  e^{-M}\leq r_M(t)\leq e^M.
\end{equation}
Let $T$ be the nondecreasing map with $T_\#\nu=\mu_M$, and set $a=T'$.  Proposition
\ref{prop:entropy-jacobian} shows that $a$ satisfies the hypotheses of
Proposition~\ref{thm:tho} and
\begin{equation}\label{eq:Gibbs-entropy}
  \Ent(\mu_M\mid\nu)=\int_0^\infty\Phi(a)\,\dd\nu.
\end{equation}

The Gibbs variational identity reads
\begin{equation}\label{eq:Gibbs-identity}
  \log \mathcal Z_M
  =\int_0^\infty\lvert w_M\rvert^{p'}\,\dd\mu_M
   -\Ent(\mu_M\mid\nu).
\end{equation}
Define
\begin{equation}\label{eq:g-pullback}
  g(s):=w_M'(T(s))a(s)^{1/p}.
\end{equation}
By the change of variables $t=T(s)$ and \eqref{eq:wM-energy},
\begin{equation}\label{eq:g-energy}
  \int_0^\infty\lvert g(s)\rvert^p\,\dd s
  =\int_0^\infty\lvert w_M'(t)\rvert^p\,\dd t\leq1.
\end{equation}
Since $1/p+1/p'=1$, the chain rule and $w_M(T(0))=w_M(0)=0$ give
\begin{align}
  w_M(T(s))
  &=\int_0^s w_M'(T(r))a(r)\,\dd r\notag\\
  &=\int_0^s g(r)a(r)^{1/p'}\,\dd r
   =H_ag(s).
  \label{eq:pullback-Hardy}
\end{align}
Therefore,
\begin{equation}\label{eq:Gibbs-moment-pullback}
  \int_0^\infty\lvert w_M\rvert^{p'}\,\dd\mu_M
  =\int_0^\infty\lvert w_M(T(s))\rvert^{p'}\,\dd\nu(s)
  =\lVert H_ag\rVert_{L^{p'}(\nu)}^{p'}.
\end{equation}
Proposition~\ref{thm:tho}, together with \eqref{eq:Gibbs-entropy}, now yields
\[
  \int\lvert w_M\rvert^{p'}\,\dd\mu_M
  \leq\Ent(\mu_M\mid\nu)+\mathsf J_p.
\]
Substitution in \eqref{eq:Gibbs-identity} gives the bound
\begin{equation}\label{eq:ZM-bound}
  \log \mathcal Z_M\leq\mathsf J_p,
\end{equation}
uniformly in $M$.  Finally,
\[
  e^{\lvert w_M\rvert^{p'}}
  =e^{\min\{\lvert w\rvert^{p'},M\}}
  \uparrow e^{\lvert w\rvert^{p'}}.
\]
Monotone convergence in \eqref{eq:ZM-bound} proves
$\mathsf K_p\leq\mathsf J_p$.
\end{proof}

\subsection{Exact duality and the deficit identity}
\label{sec:exact-duality}

The converse inequality follows by choosing the Fenchel and H\"older
optimizers in the preceding argument.  Truncating the optimal weight
allows the construction for every function in the dual unit ball.

\begin{theorem}\label{thm:exact-duality}
For every $p>1$,
\begin{equation}\label{eq:exact-duality}
 \mathsf K_p=\mathsf J_p<\infty.
\end{equation}
\end{theorem}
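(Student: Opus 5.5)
Theorem~\ref{thm:K-upper} already gives $\mathsf K_p\leq\mathsf J_p$, and Lemma~\ref{lem:dhl} gives $\mathsf J_p<\infty$. So the task is $\mathsf J_p\leq\mathsf K_p$. The plan is to fix $f$ in the unit ball of $L^p(0,1)$ and build admissible $w$ with $\log\int e^{|w|^{p'}}\dd\nu$ close to $\mathcal J_p(f)$. We reverse each inequality in the proof of Theorem~\ref{thm:K-upper}, choosing its equality case, and truncate to stay inside the hypotheses of Propositions~\ref{thm:tho} and~\ref{prop:entropy-jacobian}.

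\textbf{Step 1 (reduction).} We may assume $f\geq0$, since this does not change $Z_f$. Transfer to the half-line via $h(r)=f(e^{-r})$, so that $z_h(r)=Z_f(e^{-r})$ and $\mathcal J_p(f)=\int-\log(1-z_h)\,\dd\nu$. By Lemma~\ref{lem:dhl}, $z_h<1$ a.e.

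\textbf{Step 2 (Fenchel equality, truncated).} The Fenchel optimizer is $a=(1-z_h)^{-1}$, which is unbounded near contact. For $N>1$ set
\[
a_N=\min\{(1-z_h)^{-1},N\}.
\]
Then $a_N\in[1,N]$, and $a_Nz_h-\Phi(a_N)\uparrow-\log(1-z_h)$ by monotonicity of the Fenchel gap in the truncation. Hence $\int(a_Nz_h-\Phi(a_N))\,\dd\nu\to\mathcal J_p(f)$ by monotone convergence.

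\textbf{Step 3 (H\"older equality).} Choose $g$ with $\|g\|_{L^p}=1$ achieving equality in \eqref{eq:tho-fubini}--\eqref{eq:tho-holder}:
\[
g=c\,(a_N^{1/p'}B_h)^{p'-1},\qquad c^{-1}=\Bigl(\int a_N z_h\,\dd\nu\Bigr)^{1/p}.
\]
With this choice,
\[
\int H_{a_N}g\,h\,\dd\nu=\Bigl(\int a_Nz_h\,\dd\nu\Bigr)^{1/p'}.
\]

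\textbf{Step 4 (undo the transport).} Define $T(s)=\int_0^sa_N$, which is bi-Lipschitz with $T(0)=0$, and let $w$ be determined by $w(T(s))=H_{a_N}g(s)$. By \eqref{eq:g-energy} read backwards, $w(0)=0$ and $\|w'\|_{L^p}=\|g\|_{L^p}\leq1$. Apply the Gibbs variational principle with the test measure $\mu=T_\#\nu$, whose density is bounded above and below. The computation in Proposition~\ref{prop:entropy-jacobian} (integration by parts with $V(s)=s$; the boundary term vanishes because $|T(s)-s|\leq(N-1)s$ and $e^{-s}$ decays, so one should re-check that step for linear rather than bounded growth) gives $\Ent(\mu\mid\nu)=\int\Phi(a_N)\,\dd\nu$. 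Then
\[
\log\int e^{|w|^{p'}}\dd\nu\geq\int|H_{a_N}g|^{p'}\dd\nu-\int\Phi(a_N)\,\dd\nu.
\]
Since $\|h\|_{L^p(\nu)}\leq1$,
\[
\|H_{a_N}g\|_{L^{p'}(\nu)}\geq\int H_{a_N}g\,h\,\dd\nu,
\]
so the right-hand side is at least $\int(a_Nz_h-\Phi(a_N))\,\dd\nu$. Letting $N\to\infty$ and using Step 2 gives $\mathsf K_p\geq\mathcal J_p(f)$.

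\textbf{Main obstacle.} The delicate step is Step 4: justifying the entropy identity for the truncated weight $a_N$, which is not of the Gibbs form of Proposition~\ref{prop:entropy-jacobian}. It needs only the Gibbs inequality $\log\int e^{\phi}\dd\nu\geq\int\phi\,\dd\mu-\Ent(\mu\mid\nu)$, valid for any probability $\mu$ with $\phi$ bounded below. Its ingredients are the Jacobian relation $r(T(s))=e^{T(s)-s}/a_N(s)$ and the integration by parts $\int T e^{-s}\dd s=\int a_Ne^{-s}\dd s$. The latter holds because $T\leq Ns$ makes all terms integrable. I expect this to go through with minor bookkeeping.
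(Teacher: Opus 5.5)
Your proposal is correct and follows the paper's proof of $\mathsf J_p\le\mathsf K_p$ essentially step for step: the truncated Fenchel weight $a_N=\min\{N,(1-z_h)^{-1}\}$, the H\"older optimizer built from $a_N^{1/p'}B_h$, the bi-Lipschitz map $T(s)=\int_0^s a_N$ with $T(s)\le Ns$ giving $\Ent(T_\#\nu\mid\nu)=\int\Phi(a_N)\,\dd\nu$, the Gibbs inequality, and monotone convergence in $N$. There are two harmless slips: replacing $f$ by $|f|$ does not preserve $Z_f$ but only increases it (which suffices, since $\mathcal J_p(f)\le\mathcal J_p(|f|)$), and the density of $T_\#\nu$ is bounded below by $1/N$ but not in general above (because $T(s)-s=\int_0^s(a_N-1)$ can be unbounded), which your entropy computation never actually uses.
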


\begin{proof}
The inequality $\mathsf K_p\leq\mathsf J_p$ is Theorem~\ref{thm:K-upper}.

For the converse, fix $h\in L^p(\nu)$, $\|h\|_p\leq1$, and use $B_h,z_h$
from \eqref{eq:zh}.  For $N\geq1$ set
\[
 a_N(r):=\min\left\{N,\frac1{1-z_h(r)}\right\},
 \qquad T_N(s):=\int_0^s a_N(r)\,\dd r,
\]
and
\[
 C_N(r):=a_N(r)^{1/p'}B_h(r).
\]
If $C_N\not\equiv0$, choose the H\"older optimizer
\[
 g_N(r):=
 \frac{\operatorname{sgn}(C_N(r))|C_N(r)|^{p'-1}}
      {\|C_N\|_{p'}^{p'/p}}.
\]
Then $\|g_N\|_p=1$ and $\int g_NC_N=\|C_N\|_{p'}$.  Since
$1\leq a_N\leq N$, $T_N$ is a bi-Lipschitz increasing bijection of the
half-line.  Define $w_N$ by
\begin{equation}\label{eq:reverse-wN}
 w_N(T_N(s)):=\int_0^s g_N(r)a_N(r)^{1/p'}\,\dd r.
\end{equation}
The chain rule and $\dd t=a_N(s)\,\dd s$ give
\begin{equation}\label{eq:reverse-energy}
 \int_0^\infty|w_N'(t)|^p\,\dd t=\int_0^\infty|g_N(s)|^p\,\dd s=1.
\end{equation}

Let $\mu_N=(T_N)_\#\nu$.  Since $T_N(s)\leq Ns$, the boundary term in
the entropy integration by parts vanishes, and
\[
 \Ent(\mu_N\mid\nu)=\int\Phi(a_N)\,\dd\nu.
\]
We use the Gibbs variational inequality
$\log\int e^U\dd\nu\geq\int U\,\dd\mu-\Ent(\mu\mid\nu)$, valid for every
probability measure $\mu\ll\nu$ with finite entropy and $\int U^+\dd\mu<\infty$;
it follows from Jensen's inequality applied to
$\log\int e^{U-\log(\dd\mu/\dd\nu)}\,\dd\mu$.  Here $U=|w_N|^{p'}$ and
$\mu=\mu_N$; since $\|w_N'\|_p=1$ we have $|w_N(t)|^{p'}\leq t$, and
$\int t\,\dd\mu_N=\int T_N\,\dd\nu\leq N$.  The Gibbs inequality, followed by
$L^{p'}$ duality and the chosen H\"older equality case, yields
\begin{align*}
 \mathsf K_p
 &\geq\log\int e^{|w_N|^{p'}}\,\dd\nu\\
 &\geq\int|w_N|^{p'}\,\dd\mu_N-\Ent(\mu_N\mid\nu)\\
 &=\|H_{a_N}g_N\|_{L^{p'}(\nu)}^{p'}
     -\int\Phi(a_N)\,\dd\nu\\
 &\geq\left|\int H_{a_N}g_Nh\,\dd\nu\right|^{p'}
     -\int\Phi(a_N)\,\dd\nu\\
 &=\int\bigl(a_Nz_h-\Phi(a_N)\bigr)\,\dd\nu.
\end{align*}
For each $0\leq z\leq1$, the last integrand increases with $N$ to
$\Phi^*(z)=-\log(1-z)$, with the extended-value convention at $z=1$.
Monotone convergence gives
\[
 \mathsf K_p
 \geq\int-\log(1-z_h)\,\dd\nu.
\]
Taking the supremum over $h$ and using $x=e^{-r}$ proves the reverse
inequality.  If $C_N\equiv0$, then $z_h\equiv0$ and there is nothing to
prove.
\end{proof}

The three inequalities used in the proof of $\mathsf K_p\leq\mathsf J_p$
can be written as identities with explicit nonnegative remainders.  This is
the form in which the duality is used in Section~\ref{sec:disk}.

\begin{theorem}
\label{prop:deficit-decomposition}
Let $w$ be as in Theorem~\ref{thm:one-dimensional} with $n$ replaced by
$p$, let $w_M$ be its truncation \eqref{eq:wM}, let $\mu_M$ be the
measure \eqref{eq:Gibbs}, let $T$ be the nondecreasing map with
$T_\#\nu=\mu_M$, $a=T'$, and
\[
 H:=w_M\circ T=H_ag,
 \qquad g=w_M'(T)\,a^{1/p},
\]
as in \eqref{eq:g-pullback}.
If $H\not\equiv0$, set
\[
 h_H:=\frac{\operatorname{sgn}(H)|H|^{p'-1}}
              {\|H\|_{p'}^{p'-1}}
\]
and define $z=z_{h_H}$.  Then
\begin{equation}\label{eq:deficit-decomposition}
 \mathsf K_p-\log\int e^{|w_M|^{p'}}\,\dd\nu
 =D_1+D_2+D_3,
\end{equation}
where
\begin{align*}
 D_1
 &:=\mathsf J_p-\int\Phi^*(z)\,\dd\nu,\\
 D_2
 &:=\int\Phi(a(1-z))\,\dd\nu,\\
 D_3
 &:=\int az\,\dd\nu-\|H\|_{p'}^{p'}.
\end{align*}
All three terms are nonnegative.
\end{theorem}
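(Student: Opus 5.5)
The identity is an algebraic rearrangement of the three facts used in the proof of Theorem~\ref{thm:K-upper}. These are the Gibbs identity \eqref{eq:Gibbs-identity}, the entropy formula \eqref{eq:Gibbs-entropy}, and the equality $\mathsf K_p=\mathsf J_p$ of Theorem~\ref{thm:exact-duality}. Nonnegativity will come from the three inequalities in the proof of Proposition~\ref{thm:tho}.

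\emph{Integrability and admissibility.} By \eqref{eq:wM-energy}, $|H|=|w_M\circ T|\leq M^{1/p'}$, so $H\in L^{p'}(\nu)$. Then
\[
\|h_H\|_{L^p(\nu)}^p=\frac{\int|H|^{p'}\,\dd\nu}{\|H\|_{p'}^{p'}}=1
\qquad\text{and}\qquad
\int Hh_H\,\dd\nu=\|H\|_{p'}.
\]
Hence $h_H$ lies in the unit sphere of $L^p(\nu)$. Under the substitution $x=e^{-r}$ used in the proof of Proposition~\ref{thm:tho}, $z=z_{h_H}$ corresponds to $Z_f$ with $\|f\|_p=1$. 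Therefore $0\leq z\leq1$ by \eqref{eq:zh-bound}, and $\int\Phi^*(z)\,\dd\nu=\mathcal J_p(f)\leq\mathsf J_p<\infty$ by Lemma~\ref{lem:dhl}. In particular $z<1$ a.e.

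By Proposition~\ref{prop:entropy-jacobian}, $a$ is bounded above and below by positive constants. Hence $\int az\,\dd\nu$ and $\int\Phi(a)\,\dd\nu$ are finite, and $a(1-z)>0$ a.e. So every quantity in $D_1,D_2,D_3$ is finite.

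\emph{The identity.} I will expand $D_2$ pointwise:
\[
\Phi\bigl(a(1-z)\bigr)=a-az-1-\log a-\log(1-z)=\Phi(a)-az+\Phi^*(z).
\]
Summing the three terms, the $\Phi^*(z)$ and $az$ contributions cancel, leaving
\[
D_1+D_2+D_3=\mathsf J_p+\int\Phi(a)\,\dd\nu-\|H\|_{p'}^{p'}.
\]
By \eqref{eq:Gibbs-entropy}, $\int\Phi(a)\,\dd\nu=\Ent(\mu_M\mid\nu)$. By \eqref{eq:Gibbs-moment-pullback}, $\|H\|_{p'}^{p'}=\int|w_M|^{p'}\,\dd\mu_M$. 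The Gibbs identity \eqref{eq:Gibbs-identity} then turns the right-hand side into $\mathsf J_p-\log\mathcal Z_M$. Theorem~\ref{thm:exact-duality} replaces $\mathsf J_p$ by $\mathsf K_p$, which is \eqref{eq:deficit-decomposition}.

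\emph{Nonnegativity.} $D_1\geq0$ because $h_H$ is admissible in the supremum \eqref{eq:KDHL-def}, after the change of variables above. $D_2\geq0$ because $\Phi\geq0$ on $(0,\infty)$ and $a(1-z)>0$ a.e. For $D_3$, I apply \eqref{eq:tho-holder} with $h=h_H$; this is legitimate since $\|g\|_p\leq1$ by \eqref{eq:g-energy} and $H=H_ag$ by \eqref{eq:pullback-Hardy}. It gives
\[
\|H\|_{p'}^{p'}=\Bigl|\int Hh_H\,\dd\nu\Bigr|^{p'}\leq\int az\,\dd\nu.
\]

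The only delicate points are bookkeeping. One must verify that $z<1$ a.e., so that $\Phi(a(1-z))$ is defined. One must also check that no $\infty-\infty$ arises when the integrals are split; this is settled by the finiteness established in the integrability step. I expect no genuine obstacle beyond this.
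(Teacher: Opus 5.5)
Your proposal is correct and follows the paper's proof. As there, $D_1\geq0$ comes from the admissibility of $h_H$, and $D_3\geq0$ from the H\"older step \eqref{eq:tho-holder} with $h=h_H$. The identity comes from the pointwise relation $\Phi(a(1-z))=\Phi(a)+\Phi^*(z)-az$, combined with the Gibbs identity \eqref{eq:Gibbs-identity}, the entropy formula \eqref{eq:Gibbs-entropy} and $\mathsf K_p=\mathsf J_p$. Your finiteness checks (boundedness of $H$ and $a$, and $z<1$ a.e.) supply bookkeeping that the paper leaves implicit.
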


\begin{proof}
The first term is nonnegative by definition.  The pointwise identity
\[
 \Phi(a)+\Phi^*(z)-az=\Phi(a(1-z))
\]
proves the same for the second.  Finally,
\[
 \|H\|_{p'}=\int Hh_H\,\dd\nu
 =\int g(r)a(r)^{1/p'}B_{h_H}(r)\,\dd r
 \leq\left(\int az\,\dd\nu\right)^{1/p'},
\]
so $D_3\geq0$.  Adding the three expressions and using the
Gibbs identity proves \eqref{eq:deficit-decomposition}.
\end{proof}

\subsection{The Moser--Trudinger inequality on general domains}
\label{sec:euclidean}

For $p=n$, the one-dimensional estimate and the radial identities of
Section~\ref{sec:reduction} complete the proof on a bounded domain.

\begin{proof}[Proof of \eqref{eq:intro-mt}]
First let $u\in C_c^\infty(\Omega)$ and replace it by $\lvert u\rvert$.
Let $u^*$ be its Schwarz rearrangement on the ball $B_R$ with
$\lvert B_R\rvert=\lvert\Omega\rvert$.  Proposition~\ref{prop:ps} gives
\[
  \int_{B_R}\lvert\nabla u^*\rvert^n\,\dd x
  \leq\int_\Omega\lvert\nabla u\rvert^n\,\dd x\leq1.
\]
Let $w$ be defined by \eqref{eq:radial-change}.  Equimeasurability,
\eqref{eq:exponential-identity}, and Theorem
\ref{thm:one-dimensional} imply
\begin{align*}
  \frac1{\lvert\Omega\rvert}
  \int_\Omega e^{\alpha_n\lvert u\rvert^{n/(n-1)}}\,\dd x
  &=\frac1{\lvert B_R\rvert}
    \int_{B_R}e^{\alpha_n\lvert u^*\rvert^{n/(n-1)}}\,\dd x\\
  &=\int_0^\infty e^{\lvert w(t)\rvert^{n/(n-1)}-t}\,\dd t
  \leq e^{\mathsf J_n}.
\end{align*}

For a general $u\in W_0^{1,n}(\Omega)$ with
$\lVert\nabla u\rVert_n\leq1$, choose
$u_j\in C_c^\infty(\Omega)$ converging to $u$ in $W^{1,n}$.  Put
\[
  v_j:=\frac{u_j}{\max\{1,\lVert\nabla u_j\rVert_n\}}.
\]
Then $\lVert\nabla v_j\rVert_n\leq1$, and, after passing to a subsequence,
$v_j\to u$ almost everywhere.  Fatou's lemma shows that the supremum in
\eqref{eq:intro-mt} is at most $e^{\mathsf J_n}$.
\end{proof}

\begin{remark}\label{rem:exact-constant}
For $\Omega=B_R$ the bound $e^{\mathsf J_n}$ is the exact
value of the supremum in \eqref{eq:intro-mt}, and it is attained
(Theorem~\ref{thm:A}).  For $n=2$, Proposition~\ref{prop:disk-constant}
gives $4.4027320\leq e^{\mathsf J_2}\leq4.4027321$, compared with
the Carleson--Chang level $1+e\approx3.7183$.  For other $p$,
Appendix~\ref{sec:appendix-values} lists approximate values of
$\mathsf J_p$; they are included for orientation and are not used in any
proof.
\end{remark}

\begin{remark}\label{rem:scope}
The identity \eqref{eq:pullback-Hardy} rests on the factorization
$T'=T'^{1/p}T'^{1/p'}$ and is specific to first derivatives.  For the
higher-order inequalities of Adams \cite{Adams1988}, derivatives of
$w\circ T$ involve $T'',\dots,T^{(m)}$, whereas the entropy
\eqref{eq:entropy-jacobian} controls only $T'$.  An extension to higher
order would require a version of Proposition~\ref{thm:tho} for Riesz or Green
potentials; such kernels are the basis of the general approach to Adams
inequalities in \cite{FontanaMorpurgo2011}.
\end{remark}

\section{Maximizers of the dual problem}
\label{sec:dual-extremals}

The uniform integrability in Lemma~\ref{lem:dhl} gives compactness of
maximizing sequences for the dual problem.  We then identify the functions
for which $\sup Z_f=1$ and show that their values lie strictly below
$\mathsf J_p$.  The resulting separation from the singularity permits the
Euler--Lagrange equation and the reconstruction of primal extremals.
Write
\begin{align*}
 \mathcal J_p(f)&:=\int_0^1-\log(1-Z_f(x))\,\dd x,\\
 \mathcal N_p&:=
 \{f\geq0:\|f\|_p=1,\ \mathcal J_p(f)=\mathsf J_p\}.
\end{align*}

\begin{proposition}
\label{thm:dual-extremizers}
For every $p>1$, $\mathcal N_p$ is nonempty and compact in $L^p(0,1)$.
If $f_k\geq0$, $\|f_k\|_p\leq1$ and $\mathcal J_p(f_k)\to\mathsf J_p$, then
\begin{equation}\label{eq:qual-stability}
 \operatorname{dist}_{L^p}(f_k,\mathcal N_p)\longrightarrow0.
\end{equation}
\end{proposition}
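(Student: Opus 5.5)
The plan is the direct method, with Lemma~\ref{lem:dhl} supplying the compactness that the primal problem lacks.

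Take any sequence $f_k\geq0$ with $\|f_k\|_p\leq1$ and $\mathcal J_p(f_k)\to\mathsf J_p$. For nonnegative maximizing sequences nothing is lost, since $Z_{|f|}\geq Z_f$ and $\Phi^*$ is increasing. By reflexivity, after passing to a subsequence we have $f_k\rightharpoonup f$ weakly in $L^p(0,1)$, with $f\geq0$ and $\|f\|_p\leq1$. For every fixed $x$, weak convergence gives $F_k(x)=\int_0^x f_k\to F(x)$. In fact this convergence is uniform, because the $F_k$ are equi-Hölder: $|F_k(x)-F_k(y)|\leq|x-y|^{1/p'}$. Hence $Z_{f_k}(x)\to Z_f(x)$ for every $x\in(0,1]$, and therefore $-\log(1-Z_{f_k})\to-\log(1-Z_f)$ pointwise, with values in $[0,\infty]$.

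Lemma~\ref{lem:dhl} says that the family $\{-\log(1-Z_{f_k})\}$ is uniformly integrable. By Vitali's theorem, or by Fatou's lemma applied after truncation at a level $A$ and then letting $A\to\infty$, we get $\mathcal J_p(f_k)\to\mathcal J_p(f)$. Pointwise convergence where the limit equals $+\infty$ is harmless: by \eqref{eq:level-measure} that set has measure zero. Consequently $\mathcal J_p(f)=\mathsf J_p$.

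Next I would show that $\|f\|_p=1$. Suppose instead that $0<\|f\|_p<1$ (if $f=0$ then $\mathcal J_p(f)=0<\mathsf J_p$, since $\mathsf J_p>0$). Put $\tilde f=f/\|f\|_p$. Then $Z_{\tilde f}=\|f\|_p^{-p'}Z_f$, which is strictly larger wherever $Z_f>0$. Because $\Phi^*$ is strictly increasing, $\mathcal J_p(\tilde f)>\mathsf J_p$, a contradiction. So $f\in\mathcal N_p$, and in particular $\mathcal N_p\neq\emptyset$.

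The key upgrade, and the point where some care is needed, is from weak to strong convergence. Since $\|f_k\|_p\leq1=\|f\|_p$, we have $\limsup_k\|f_k\|_p\leq\|f\|_p$. Weak lower semicontinuity of the norm gives the reverse inequality, so $\|f_k\|_p\to\|f\|_p$. For $1<p<\infty$ the space $L^p$ is uniformly convex, so the Radon--Riesz property applies and yields $f_k\to f$ strongly in $L^p$.

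The two claims now follow from this subsequence argument.

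\emph{Distance to $\mathcal N_p$.} Suppose $\operatorname{dist}_{L^p}(f_k,\mathcal N_p)\to0$ failed. Then some subsequence would stay at distance at least $\delta>0$ from $\mathcal N_p$. But that subsequence is itself a nonnegative maximizing sequence, so by the argument above a further subsequence converges strongly to an element of $\mathcal N_p$, which is a contradiction.

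\emph{Compactness of $\mathcal N_p$.} Any sequence in $\mathcal N_p$ is a maximizing sequence. It therefore has a subsequence converging strongly, and the limit lies in $\mathcal N_p$. Hence $\mathcal N_p$ is compact.
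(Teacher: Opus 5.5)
Your proposal is correct and follows essentially the same route as the paper's proof: a weakly convergent subsequence, uniform convergence of the primitives, Vitali's theorem via the uniform integrability in Lemma~\ref{lem:dhl}, the rescaling $f\mapsto f/\|f\|_p$ to force $\|f\|_p=1$, the upgrade from weak to strong convergence by uniform convexity, and then the same argument applied to arbitrary maximizing sequences to get both compactness and \eqref{eq:qual-stability}. You also supply small details that the paper leaves implicit, namely that $\{Z_f=1\}$ is a null set, the exclusion of $f=0$, and the treatment of sequences with $\|f_k\|_p\leq1$ rather than $\|f_k\|_p=1$.
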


\begin{proof}
Choose a nonnegative maximizing sequence with norm one.  After passing to
a subsequence, $f_k\rightharpoonup f$ in $L^p$.  The primitives
$F_k(x)=\int_0^x f_k$ are uniformly $1/p'$-H\"older continuous and converge
uniformly to $F(x)=\int_0^x f$.  Hence $Z_{f_k}\to Z_f$ almost everywhere.
The uniform integrability in Lemma~\ref{lem:dhl} and Vitali's theorem give
$\mathcal J_p(f_k)\to \mathcal J_p(f)$.  Thus $f$ is a maximizer.  It is nonzero, and its
norm must be one: if $r=\|f\|_p<1$, then
$Z_{f/r}=r^{-p'}Z_f$ and $\mathcal J_p(f/r)>\mathcal J_p(f)$.
Uniform convexity of $L^p$ upgrades weak to
strong convergence.  Applying the same argument to any maximizing
sequence proves both compactness and \eqref{eq:qual-stability}.
\end{proof}

Next we determine the functions with $\sup Z_f=1$ and the values of $\mathcal J_p$
on them.

\begin{proposition}
\label{prop:contact-profiles}
If $f\geq0$, $\|f\|_p=1$, and $Z_f(x_0)=1$ for some $x_0\in(0,1]$, then
\begin{equation}\label{eq:contact-profile}
 f(x)=x_0^{-1/p}\one_{(0,x_0)}(x)\quad\text{a.e.}
\end{equation}
For this function,
\begin{equation}\label{eq:contact-value}
 \mathcal J_p(f)=x_0\bigl(\psi(p)+\gamma\bigr)-x_0\log x_0
 -(1-x_0)\log(1-x_0),
\end{equation}
with $0\log0=0$; for $x_0=1$ this is $\mathcal J_p(f)=\psi(p)+\gamma$.
Moreover
\begin{equation}\label{eq:contact-threshold}
 \max_{0<x_0\leq1}
 \bigl(x_0(\psi(p)+\gamma)-x_0\log x_0-(1-x_0)\log(1-x_0)\bigr)
 =\log\bigl(1+e^{\psi(p)+\gamma}\bigr),
\end{equation}
attained at $x_0=e^{\psi(p)+\gamma}/(1+e^{\psi(p)+\gamma})$.
\end{proposition}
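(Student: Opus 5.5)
The plan is to read off the profile from the equality case of H\"older's inequality, then compute $\mathcal J_p$ by splitting $(0,1)$ at $x_0$, and finally maximize in $x_0$ by strict concavity.

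\emph{Step 1: the contact profile.} Use the chain \eqref{eq:Z-bound} from the proof of Lemma~\ref{lem:dhl}, namely $Z_f(x_0)\leq P(x_0)^{1/(p-1)}\leq1$. If $Z_f(x_0)=1$, both inequalities are equalities.
\begin{itemize}
\item Equality in the second gives $P(x_0)=\int_0^{x_0}f^p=1=\|f\|_p^p$, so $f=0$ a.e.\ on $(x_0,1)$.
\item Equality in the first is equality in H\"older's inequality $\int_0^{x_0}f\leq x_0^{1/p'}(\int_0^{x_0}f^p)^{1/p}$ with $f\geq0$. This forces $f$ to be a.e.\ constant on $(0,x_0)$.
\end{itemize}
The normalization $P(x_0)=1$ then fixes the constant as $x_0^{-1/p}$, which is \eqref{eq:contact-profile}. (Alternatively, Lemma~\ref{lem:holder-rigidity} with right-hand side zero gives $g_{x_0}\equiv1$ directly.)

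\emph{Step 2: the value \eqref{eq:contact-value}.} Compute the primitive explicitly. For $x\leq x_0$ we have $F(x)=x\,x_0^{-1/p}$, so $Z_f(x)=(x/x_0)^{p'-1}=(x/x_0)^{1/(p-1)}$. For $x\geq x_0$ we have $F=x_0^{1/p'}$, so $Z_f(x)=x_0/x$.

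On $(0,x_0)$, rescale $x=x_0s$ and then substitute $s=u^{p-1}$. This gives
\[
x_0\int_0^1-\log(1-u)\,(p-1)u^{p-2}\,\dd u .
\]
Expand $-\log(1-u)=\sum_k u^k/k$ and integrate termwise (monotone convergence) to get
\[
x_0\sum_{k\geq1}\frac1k\cdot\frac{p-1}{k+p-1}=x_0\sum_{k\geq1}\Bigl(\frac1k-\frac1{k+p-1}\Bigr)=x_0\bigl(\psi(p)+\gamma\bigr),
\]
using the standard series for the digamma function.

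On $(x_0,1)$, write $-\log(1-x_0/x)=\log x-\log(x-x_0)$ and integrate with the antiderivatives $x\log x-x$ and $(x-x_0)\log(x-x_0)-(x-x_0)$. This yields $-x_0\log x_0-(1-x_0)\log(1-x_0)$.

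Adding the two pieces gives \eqref{eq:contact-value}. The case $x_0=1$ is the degenerate case with $0\log0=0$.

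\emph{Step 3: the threshold \eqref{eq:contact-threshold}.} Set $c=\psi(p)+\gamma$ and $\phi(x_0)=cx_0-x_0\log x_0-(1-x_0)\log(1-x_0)$. This function is strictly concave on $(0,1)$ and continuous on $(0,1]$. Its critical point solves $c-\log x_0+\log(1-x_0)=0$, i.e.\ $x_0=e^c/(1+e^c)$. There $\log x_0=c+\log(1-x_0)$, so
\[
\phi(x_0)=-\log(1-x_0)=\log(1+e^c).
\]
This is the familiar Gibbs/log-sum-exp identity for two states. Since $\phi(1)=c<\log(1+e^c)$, the endpoint is not the maximum.

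\emph{Main obstacle.} I expect the only delicate point to be Step~1: one must ensure that equality holds in both links of the chain. The support condition ($f=0$ beyond $x_0$) comes from the $L^p$ normalization, not from H\"older's inequality, and has to be extracted separately. The remaining steps are explicit computations.
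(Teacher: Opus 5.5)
Your proposal is correct and follows the paper's proof essentially step by step. The profile comes from equality in both H\"older's inequality on $(0,x_0)$ and the energy bound $P(x_0)\leq1$, the value is computed by splitting at $x_0$ with the same substitution leading to $x_0(p-1)\int_0^1u^{p-2}[-\log(1-u)]\,\dd u$, and the threshold follows by maximizing in $x_0$; you simply supply details the paper leaves implicit: the digamma series, the antiderivatives for the entropy term, and the strict concavity.
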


\begin{proof}
Equality $Z_f(x_0)=1$ forces equality in both H\"older's inequality on
$(0,x_0)$ and the total energy bound.  Hence $f$ is constant on
$(0,x_0)$ and vanishes afterwards, which gives \eqref{eq:contact-profile}.
For $x\leq x_0$ one has
\[
 Z_f(x)=\left(\frac{x}{x_0}\right)^{1/(p-1)},
\]
and therefore
\[
 \int_0^{x_0}-\log(1-Z_f(x))\,\dd x
 =x_0(p-1)\int_0^1y^{p-2}[-\log(1-y)]\,\dd y
 =x_0(\psi(p)+\gamma).
\]
For $x>x_0$, $Z_f(x)=x_0/x$, whose integral is the binary entropy term in
\eqref{eq:contact-value}.  Maximizing in $x_0$ gives
\eqref{eq:contact-threshold}.
\end{proof}

\begin{proposition}\label{thm:strict-contact}
For every $p>1$,
\begin{equation}\label{eq:strict-contact}
 \mathsf J_p>\log\bigl(1+e^{\psi(p)+\gamma}\bigr).
\end{equation}
In particular, every $f\in\mathcal N_p$ satisfies
\begin{equation}\label{eq:uniform-noncontact}
 \sup_{0<x\leq1}Z_f(x)<1.
\end{equation}
\end{proposition}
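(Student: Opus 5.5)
The plan is to show that the optimal contact profile of Proposition~\ref{prop:contact-profiles} is not a maximizer, using an explicit one-parameter perturbation whose first-order gain is linear in the parameter while the loss is only of order $t^p\log(1/t)$. Fix $x_0=e^{\psi(p)+\gamma}/(1+e^{\psi(p)+\gamma})\in(0,1)$ and $f_0=x_0^{-1/p}\one_{(0,x_0)}$, so $\mathcal J_p(f_0)=\log(1+e^{\psi(p)+\gamma})$ by \eqref{eq:contact-threshold}. For small $t>0$ put
\[
 f_t:=\frac{f_0+t\,\one_{(x_0,1)}}{\|f_0+t\,\one_{(x_0,1)}\|_p},\qquad
 \|f_0+t\one_{(x_0,1)}\|_p^p=1+t^p(1-x_0).
\]
The norm grows only at order $t^p$, because the perturbation has disjoint support from $f_0$. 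This is the key point, and it is where $p>1$ enters. Writing $\delta_t:=1-(1+t^p(1-x_0))^{-p'/p}\asymp t^p$, we get $Z_{f_t}=(1-\delta_t)Z_{f_0}$ on $(0,x_0]$. On $(x_0,1)$, with $F_0(x_0)=x_0^{1/p'}$,
\[
 Z_{f_t}(x)=(1-\delta_t)\frac{\bigl(x_0^{1/p'}+t(x-x_0)\bigr)^{p'}}{x}.
\]

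I then estimate the two pieces of $\mathcal J_p(f_t)-\mathcal J_p(f_0)$ separately.

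\emph{Loss on $(0,x_0)$.} Here $1-Z_{f_0}(x)=1-(x/x_0)^{1/(p-1)}\asymp(x_0-x)$ near $x_0$. The difference $\int_0^{x_0}\bigl[\log(1-Z_{f_0})-\log(1-(1-\delta)Z_{f_0})\bigr]\dd x$ is negative and, in absolute value, at most $C\int_0^{x_0}\min\{\delta/(x_0-x),\,\log(1+\delta/(x_0-x))\}\dd x\leq C\delta\log(1/\delta)$. This is $O(t^p\log(1/t))=o(t)$.

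\emph{Gain on $(x_0,1)$.} Expand $Z_{f_t}(x)=\frac{x_0}{x}+p'x_0^{1/p}\,t\,\frac{x-x_0}{x}+O(t^2)+O(\delta_t)$. Set $\eta(x):=p'x_0^{1/p}(x-x_0)/x$. Since $1-x_0/x=(x-x_0)/x$, the increment of $-\log(1-Z)$ is $-\log\bigl(1-t\eta x/(x-x_0)+\dots\bigr)=-\log\bigl(1-p'x_0^{1/p}t+O(t^2)\bigr)$. This is bounded uniformly on $(x_0,1)$: the denominator $1-Z_{f_0}$ vanishes at $x_0$ exactly like the numerator of $\eta$, which is where the delicate cancellation sits. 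The $O(\delta_t)$ correction near $x_0^+$ must be treated as in the loss estimate, costing another $O(t^p\log(1/t))$. The gain is therefore
\[
 (1-x_0)\,p'x_0^{1/p}\,t+o(t).
\]

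Combining the two estimates gives $\mathcal J_p(f_t)>\log(1+e^{\psi(p)+\gamma})$ for small $t$, which proves \eqref{eq:strict-contact}. I expect the main obstacle to be the uniform control near the contact point $x_0$ from both sides, where $\delta_t$ and the $O(t^2)$ terms are not small relative to $1-Z_{f_0}$. I would handle this by splitting off the window $|x-x_0|\leq t$, whose contribution is $O(t\log(1/t))$ with a small constant times... To be safe, I would instead bound that window directly by $\int_{|x-x_0|\le t}-\log(1-Z_{f_t})$ using $1-Z_{f_t}\gtrsim \delta_t+|x-x_0|$ on the left and an explicit lower bound on the right. If this does not give a clean $o(t)$, I would fall back on a more careful choice of perturbation, for instance $t\,\one_{(x_0+t,1)}$.

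For \eqref{eq:uniform-noncontact}, take $f\in\mathcal N_p$ and suppose $\sup Z_f=1$. By continuity of $Z_f$ on $(0,1]$ and $Z_f\le1$, either $Z_f(x_0)=1$ for some $x_0\in(0,1]$, or $Z_f(x)\to1$ as $x\to0$. In the first case, Proposition~\ref{prop:contact-profiles} gives $\mathcal J_p(f)\le\log(1+e^{\psi(p)+\gamma})<\mathsf J_p$, a contradiction. The second case is excluded because it forces $\int_0^x f^p\to1$ as $x\to0$ by \eqref{eq:Z-bound}, which is impossible for $f\in L^p$.
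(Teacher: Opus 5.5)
Your proposal is correct and is essentially the paper's proof. It uses the same perturbation (a small constant on $(x_0,1)$, up to how you normalize), the same $O(\delta_t\log(1/\delta_t))$ loss on $(0,x_0)$, the same first-order gain $p'x_0^{1/p}(1-x_0)\,t$ on $(x_0,1)$, and the same exclusion of contact for maximizers. The only difference is that the paper computes the gain by differentiating under the integral, using $x_0+y-(b_\delta+\delta y)^{p'}\geq c(y+\delta^p)$, whereas you expand directly.

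The obstacle you flag at the end does not arise. Write $y=x-x_0$. The function $y\mapsto x_0+y-(x_0^{1/p'}+ty)^{p'}$ vanishes at $y=0$ and has derivative $1-p'x_0^{1/p}t+O(t^2y)$. It therefore equals $y\bigl(1-p'x_0^{1/p}t+O(t^2)\bigr)$ uniformly: the Taylor remainder carries a factor $y^2$, not just $t^2$. The factor $1-\delta_t$ costs at most $\int_0^{1-x_0}\log(1+C\delta_t/y)\,\dd y=O(t^p\log(1/t))$. So drop the window-splitting fallback. It is unnecessary, and it would not work anyway, since an $O(t\log(1/t))$ bound is not $o(t)$.
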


\begin{proof}
Let $0<x_0<1$, let $f_0=x_0^{-1/p}\one_{(0,x_0)}$, put $m_0=1-x_0$, and
for small $\delta>0$ define
\[
 f_\delta(x)=
 \begin{cases}
  a_\delta,&0<x<x_0,\\
  \delta,&x_0<x<1,
 \end{cases}
 \qquad
 a_\delta:=\left(\frac{1-m_0\delta^p}{x_0}\right)^{1/p}.
\]
Then $\|f_\delta\|_p=1$, and on $(0,x_0)$ one has
$Z_{f_\delta}=\ell_\delta Z_{f_0}$ with
$\ell_\delta=(1-m_0\delta^p)^{1/(p-1)}$.  Put
$h_\delta=1-\ell_\delta=O(\delta^p)$.  On $(0,x_0/2)$ the change in the
integrand is $O(h_\delta)$.  On $(x_0/2,x_0)$ use
$s=1-Z_{f_0}(x)$, whose Jacobian is bounded there.  The absolute value
of the change in the integral is at most
\[
 C_{p,x_0}\left(
 h_\delta+\int_0^{h_\delta}\log(1+h_\delta/s)\,\dd s
       +h_\delta\int_{h_\delta}^1\frac{\dd s}s\right)
 \leq C_{p,x_0}h_\delta(1+|\log h_\delta|).
\]
Consequently,
\begin{equation}\label{eq:contact-left-variation}
 \int_0^{x_0}\bigl[-\log(1-Z_{f_\delta})
                    +\log(1-Z_{f_0})\bigr]\,\dd x
 =O(\delta^p|\log\delta|)=o(\delta).
\end{equation}

On the second interval write $x=x_0+y$ and
$b_\delta=a_\delta x_0=x_0^{1/p'}(1-m_0\delta^p)^{1/p}$.  Set
\[
 D_\delta(y):=x_0+y-(b_\delta+\delta y)^{p'}.
\]
For small $\delta>0$,
$D_\delta(0)=x_0[1-(1-m_0\delta^p)^{1/(p-1)}]\geq c_{p,x_0}\delta^p$
and $\partial_yD_\delta\geq1/2$.  Thus
$D_\delta(y)\geq c_{p,x_0}(y+\delta^p)$ on $[0,m_0]$.
For each $\delta>0$ we may differentiate under the integral, obtaining
\[
 \partial_\delta[-\log(1-Z_{f_\delta}(x_0+y))]
 =\frac{p'(b_\delta+\delta y)^{p'-1}y}{D_\delta(y)}
  +\frac{p'(b_\delta+\delta y)^{p'-1}b_\delta'}{D_\delta(y)}.
\]
The first term is uniformly bounded and tends to $p'x_0^{1/p}$ for
every $y>0$.  Since $b_\delta'=O(\delta^{p-1})$, the integral of the
absolute value of the second term is bounded by
\[
 C_{p,x_0}\left(
 \int_0^{\delta^p}\delta^{-1}\,\dd y
 +\delta^{p-1}\int_{\delta^p}^{m_0}\frac{\dd y}y\right)
 =O(\delta^{p-1}|\log\delta|)=o(1).
\]
The integral itself is continuous at $\delta=0$: its nonnegative
integrand is bounded by $C_{p,x_0}+|\log y|$.
Dominated convergence for the first derivative term and the preceding
error estimate therefore give
\begin{equation}\label{eq:contact-right-variation}
 \left.\frac{\dd}{\dd\delta}\right|_{0+}
 \int_{x_0}^1-\log(1-Z_{f_\delta}(x))\,\dd x
 =p'x_0^{1/p}(1-x_0)>0.
\end{equation}
Combining \eqref{eq:contact-left-variation} and
\eqref{eq:contact-right-variation} gives
\[
 \mathcal J_p(f_\delta)=\mathcal J_p(f_0)+p'x_0^{1/p}(1-x_0)\delta+o(\delta).
\]
Taking $x_0=e^{\psi(p)+\gamma}/(1+e^{\psi(p)+\gamma})$ and using
Proposition~\ref{prop:contact-profiles} proves \eqref{eq:strict-contact}.

For a maximizer, $Z_f$ is continuous on $(0,1]$, while
\[
 Z_f(x)\leq
 \left(\int_0^x f^p\right)^{1/(p-1)}\longrightarrow0
 \qquad(x\downarrow0).
\]
If it reached
one at some $x_0\in(0,1]$, Proposition~\ref{prop:contact-profiles} and
\eqref{eq:strict-contact} would give a contradiction.  This proves
\eqref{eq:uniform-noncontact}.  Moreover, H\"older's inequality gives
$|Z_f(x)^{1/p'}-Z_g(x)^{1/p'}|\leq\|f-g\|_{L^p}$ for all $x$, so
$f\mapsto\sup Z_f$ is continuous on $L^p(0,1)$.  By compactness of
$\mathcal N_p$, the bound \eqref{eq:uniform-noncontact} is uniform:
$\sup_{f\in\mathcal N_p}\sup_xZ_f(x)<1$.
\end{proof}

Because of \eqref{eq:uniform-noncontact}, a maximizer satisfies an
Euler--Lagrange equation, and it determines a maximizer for
$\mathsf K_p$.

\begin{proposition}
\label{prop:euler-reconstruction}
Let $f\in\mathcal N_p$ and $F(x)=\int_0^x f$.  There is $\lambda>0$ such
that
\begin{equation}\label{eq:DHL-euler}
 p'\int_y^1\frac{F(x)^{p'-1}}{x-F(x)^{p'}}\,\dd x
 =\lambda p f(y)^{p-1}\quad\text{a.e.}
\end{equation}
In particular, $f$ has a representative in
$C([0,1])\cap C^\infty(0,1)$ that is positive and strictly decreasing
on $(0,1)$, with $f(0)>0$ and $f(1)=0$.

Moreover, let $h(r)=f(e^{-r})$, define $z=z_h$ and
\[
 a=(1-z)^{-1},\qquad T(s)=\int_0^s a(r)\,\dd r,
\]
and let
\[
 C(r)=a(r)^{1/p'}B_h(r),\qquad
 g(r)=\frac{C(r)^{p'-1}}{\|C\|_{p'}^{p'/p}}.
\]
Then the function determined by
\begin{equation}\label{eq:extremal-reconstruction}
 w(T(s))=\int_0^s g(r)a(r)^{1/p'}\,\dd r
\end{equation}
satisfies $\|w'\|_p=1$ and attains $\mathsf K_p$.
\end{proposition}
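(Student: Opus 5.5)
We first derive the Euler--Lagrange equation \eqref{eq:DHL-euler}, then read off regularity of $f$, and finally check that the reconstruction turns every inequality in the proof of Theorem~\ref{thm:K-upper} into an equality.

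\emph{Step 1: the Euler--Lagrange equation.} Fix $f\in\mathcal N_p$. By Proposition~\ref{thm:strict-contact} we have $\sup_{(0,1]}Z_f\le 1-\eta$ for some $\eta>0$, and by the bound \eqref{eq:Z-bound} also $Z_f(x)\to0$ as $x\downarrow0$.

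Take $\varphi\in L^\infty(0,1)$ and consider $f_t=(f+t\varphi)/\|f+t\varphi\|_p$. The primitive $\Phi_\varphi=\int_0^x\varphi$ satisfies $|\Phi_\varphi(x)|\le x\|\varphi\|_\infty$. Hence $Z_{f_t}$ stays uniformly below $1$ for small $t$, and the integrand $-\log(1-Z)$ is smooth in this range. Differentiating under the integral (dominated convergence) gives
\[
\frac{\dd}{\dd t}\Big|_0\mathcal J_p(f_t)=\int_0^1\frac{p'F^{p'-1}\Phi_\varphi}{x-F^{p'}}\,\dd x-\Bigl(\int f^{p-1}\varphi\Bigr)\int_0^1\frac{p'F^{p'}}{x-F^{p'}}\,\dd x .
\]
The kernel $F^{p'-1}/(x-F^{p'})$ is integrable near $0$, because $F(x)\le x^{1/p'}$ and $x-F^{p'}\ge\eta x$. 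So Fubini turns the first term into $\int\varphi(y)\,p'\int_y^1 F^{p'-1}/(x-F^{p'})\,\dd x\,\dd y$.

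This derivative vanishes for every bounded $\varphi$; here we use that $f\ge0$ and the $\varphi$ supported where $f=0$ are handled by one-sided variations. This yields \eqref{eq:DHL-euler} with
\[
\lambda p=\int p'F^{p'}/(x-F^{p'}),
\]
which is positive since $f\neq0$. On $\{f=0\}$ a one-sided variation gives $\le$. But the left side $G(y)$ is strictly positive for $y<1$, so $\{f=0\}$ is null in $(0,1)$.

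\emph{Step 2: regularity.} We have $f=(G/\lambda p)^{1/(p-1)}$, and $G$ is continuous, strictly decreasing, and positive on $[0,1)$ with $G(1)=0$. This gives continuity, strict monotonicity, $f(0)>0$ and $f(1)=0$. Since $F$ is then $C^1$, the integrand is $C^1$ on $(0,1)$, hence $G$ is $C^2$. Bootstrapping gives $C^\infty(0,1)$; we use $G>0$ on $(0,1)$ so that the power $1/(p-1)$ is smooth there.

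\emph{Step 3: reconstruction.} Set $h(r)=f(e^{-r})$, so $\|h\|_{L^p(\nu)}=1$ and $z=z_h=Z_f(e^{-r})\le1-\eta$. Then $a=(1-z)^{-1}$ takes values in $[1,\eta^{-1}]$, so we can skip the truncation in the proof of Theorem~\ref{thm:exact-duality} and run it with $a_N=a$ directly. That chain gives
\[
\log\int e^{|w|^{p'}}\dd\nu\ \ge\ \Bigl|\int H_ag\,h\,\dd\nu\Bigr|^{p'}-\int\Phi(a)\,\dd\nu .
\]
Moreover $\bigl|\int H_agh\,\dd\nu\bigr|^{p'}=\|C\|_{p'}^{p'}=\int az\,\dd\nu$, and $az-\Phi(a)=\Phi^*(z)$ pointwise by the choice of $a$. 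So the right side equals $\mathcal J_p(f)=\mathsf J_p=\mathsf K_p$ by Theorem~\ref{thm:exact-duality}. The energy identity \eqref{eq:reverse-energy} gives $\|w'\|_p=\|g\|_p=1$, so $w$ is admissible and attains $\mathsf K_p$.

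\emph{Main obstacle.} The delicate point is Step 1. We must justify differentiation at $x\to0$ and at the constraint, and exclude a set where $f$ vanishes. We would handle both through the uniform non-contact bound and the linear smallness of $\Phi_\varphi$ near $0$, as above.
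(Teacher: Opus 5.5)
Your proposal is correct and follows the paper's proof. You compute the first variation, use Fubini and the Lagrange multiplier rule (justified by the uniform non-contact bound $\sup Z_f<1$), read the regularity off the integral formula for $f^{p-1}$, and run the construction from the proof of Theorem~\ref{thm:exact-duality} without truncation because $a=(1-z)^{-1}$ is bounded. The one-sided variations on $\{f=0\}$ are harmless but unnecessary: $f$ maximizes $\mathcal J_p$ over the whole unit ball of $L^p(0,1)$, not only over nonnegative functions, so two-sided variations are admissible for every bounded $\varphi$.
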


\begin{proof}
By \eqref{eq:uniform-noncontact}, $\mathcal J_p$ is differentiable near $f$.  Its first variation, followed by Fubini, is
\[
 D\mathcal J_p(f)[\varphi]
 =p'\int_0^1\varphi(y)
   \int_y^1\frac{F(x)^{p'-1}}{x-F(x)^{p'}}\,\dd x\,\dd y.
\]
The Lagrange multiplier rule gives \eqref{eq:DHL-euler}.  Its left-hand
side is positive and decreasing in $y$, so $f$ is positive and
decreasing.  It is also bounded: since $F(x)\leq x^{1/p'}$ and $\sup Z_f<1$, the
integrand in \eqref{eq:DHL-euler} is at most
$x^{1/p-1}/(1-\sup Z_f)$, so
$\lambda pf(y)^{p-1}\leq p'\int_0^1F^{p'-1}/(x-F^{p'})\,\dd x<\infty$.
The same integral formula gives a continuous representative on $[0,1]$,
with a positive value at zero and value zero at one.  Differentiating
the formula on $(0,1)$ and using $f>0$ gives smoothness there.

By \eqref{eq:uniform-noncontact}, $a$ is bounded, so this is the
construction in the proof of Theorem~\ref{thm:exact-duality} without
truncation.  It gives
\[
 \log\int e^{|w|^{p'}}\,\dd\nu
 \geq \mathcal J_p(f)=\mathsf J_p=\mathsf K_p.
\]
The opposite inequality follows from the definition of the optimal
constant, so equality holds throughout.
\end{proof}

\begin{proof}[Proof of Theorem~\ref{thm:A}]
Part (i) is Theorem~\ref{thm:exact-duality}.  Part (ii) is
Proposition~\ref{thm:dual-extremizers}.  Part (iii) is
Proposition~\ref{prop:contact-profiles} together with
Proposition~\ref{thm:strict-contact}, and part (iv) is
Proposition~\ref{prop:euler-reconstruction}.  For $p=n$, the maximizer $w$
of Proposition~\ref{prop:euler-reconstruction} gives, through
\eqref{eq:radial-change}, a radial function in $W_0^{1,n}(B_R)$ with
$\lVert\nabla u\rVert_n=1$ at which the supremum in \eqref{eq:intro-mt} on
$B_R$ is attained, by Proposition~\ref{prop:ps}.
\end{proof}

\section{Second variation and stability for \texorpdfstring{$p=2$}{p=2}}
\label{sec:p2-spectrum}

Throughout this section $p=2$.  A Pohozaev identity and the linearized
Euler--Lagrange equation reduce coercivity of the second variation to a
slope condition along the curve of solutions.  We derive an endpoint
formula for this slope and then prove the finite-power stability estimate
that will yield quadratic stability once the slope is shown to be positive.

\subsection{A Pohozaev identity}

Let $f\in\mathcal N_2$ and $F(x)=\int_0^x f$.  Equation
\eqref{eq:DHL-euler} becomes
\begin{equation}\label{eq:p2-bvp}
 -\lambda F''=\frac{F}{x-F^2},\qquad
 F(0)=0,\quad F'(1)=0,\quad\int_0^1F'^2=1.
\end{equation}

\begin{proposition}\label{prop:p2-pohozaev}
Let $\lambda>0$ and let $F$ solve $-\lambda F''=F/(x-F^2)$ with $F(0)=0$,
$F'(1)=0$, $F>0$ on $(0,1]$, $f=F'$ bounded, and $\sup_{0<x\leq1}F^2/x<1$.
Then
\begin{equation}\label{eq:p2-pohozaev}
 \mathcal J_2(f)=-\log(1-F(1)^2).
\end{equation}
In particular this holds for every $f\in\mathcal N_2$.
\end{proposition}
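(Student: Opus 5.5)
The identity should follow from a single integration by parts against the weight $x$, which is the Pohozaev multiplier for this problem. Write
\[
 G(x):=-\log\Bigl(1-\frac{F(x)^2}{x}\Bigr)=\log x-\log\bigl(x-F(x)^2\bigr),
\]
so that $\mathcal J_2(f)=\int_0^1G\,\dd x$. I will integrate by parts:
\[
 \int_0^1G\,\dd x=\bigl[xG(x)\bigr]_{0}^{1}-\int_0^1xG'(x)\,\dd x .
\]
The boundary term at $x=1$ is $-\log(1-F(1)^2)$. The boundary term at $0$ vanishes, because $\sup F^2/x<1$ makes $G$ bounded. So the whole statement reduces to showing that $\int_0^1xG'\,\dd x=0$.

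A direct computation gives $G'=1/x-(1-2FF')/(x-F^2)$. Hence
\[
 xG'=\frac{2xFF'-F^2}{x-F^2}=(2xF'-F)\,\frac{F}{x-F^2}=-\lambda\,(2xF'F''-FF''),
\]
where the last step uses the equation $F/(x-F^2)=-\lambda F''$.

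Both pieces integrate by parts back to the energy $E=\int_0^1F'^2$. For the first,
\[
 \int_0^1 2xF'F''\,\dd x=\bigl[xF'^2\bigr]_0^1-E=-E,
\]
since $F'(1)=0$ and $xf(x)^2\to0$ as $x\to0$ because $f$ is bounded. For the second,
\[
 \int_0^1FF''\,\dd x=\bigl[FF'\bigr]_0^1-E=-E,
\]
using $F(0)=0$ and $F'(1)=0$. The two contributions cancel, so $\int_0^1 xG'\,\dd x=-\lambda(-E+E)=0$, and \eqref{eq:p2-pohozaev} follows.

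The main technical point is justifying these integrations by parts near $x=0$. I would work on $[\varepsilon,1]$ and let $\varepsilon\to0$. This is legitimate for three reasons.
\begin{itemize}
 \item Since $\sup F^2/x<1$, the denominator satisfies $x-F^2\geq cx$, and $F\leq x^{1/2}$ by H\"older. Therefore $|F''|\leq Cx^{-1/2}$, which is integrable on $(0,1]$.
 \item $F\in C^2(0,1]$ by the equation.
 \item Every boundary term at $\varepsilon$ tends to zero: $\varepsilon G(\varepsilon)\to0$, $\varepsilon f(\varepsilon)^2\to0$, and $F(\varepsilon)f(\varepsilon)\to0$.
\end{itemize}

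Finally, every $f\in\mathcal N_2$ satisfies the hypotheses. Proposition~\ref{prop:euler-reconstruction} with $p=2$ gives $\lambda>0$, and it shows that $f$ is bounded, continuous, positive on $(0,1)$, smooth on $(0,1)$, and has $f(1)=0$. Positivity of $f$ gives $F>0$ on $(0,1]$. Differentiating \eqref{eq:DHL-euler} in $y$ yields the ODE together with $F'(1)=0$. The bound $\sup Z_f<1$ is \eqref{eq:uniform-noncontact}.
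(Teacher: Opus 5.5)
Your proof is correct and is essentially the paper's Pohozaev argument. The paper packages the same computation as $[x\mathcal P]_0^1$ with $\mathcal P=-\lambda F'^2+\log(1-F^2/x)$, which is $x\mathcal P=-xG-\lambda xF'^2$ in your notation, and it uses the same identity $\int_0^1 F^2/(x-F^2)\,\dd x=\lambda E$, obtained by testing the equation with $F$. One cosmetic point: no normalization of $\int_0^1 f^2$ is assumed here, so the bound $F\le x^{1/2}$ should be read off from $\sup F^2/x<1$ rather than from H\"older; alternatively, use $F\le\|f\|_\infty x$, which even makes $F''$ bounded.
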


\begin{proof}
For
\[
 \mathcal L=-\log(1-F^2/x)-\lambda F'^2,
 \qquad
 \mathcal P=F'\partial_{F'}\mathcal L-\mathcal L
 =-\lambda F'^2+\log(1-F^2/x),
\]
the Euler equation gives
\[
 \mathcal P'(x)=\frac{F(x)^2}{x(x-F(x)^2)}.
\]
Thus
\[
 [x\mathcal P]_0^1
 =\int_0^1\mathcal P\,\dd x
  +\int_0^1\frac{F^2}{x-F^2}\,\dd x.
\]
Multiplying the equation by $F$ and integrating by parts shows that the
last integral is $\lambda E$, where $E=\int F'^2$.  Since
$\int_0^1\mathcal P=-\lambda E-\mathcal J_2(f)$, the right-hand side equals
$-\mathcal J_2(f)$.  On the left, $x\mathcal P(x)\to0$ as $x\to0$, because $f$ is
bounded and $\log(1-F^2/x)$ is bounded; and $\mathcal P(1)=\log(1-F(1)^2)$
because $F'(1)=0$.
\end{proof}

\subsection{The second variation and the slope criterion}

Set
\[
 U=(x-F^2)^{-1},\qquad
 V=\frac{x+F^2}{(x-F^2)^2},
\]
and define on
$\mathcal H_0=\{\Xi\in H^1(0,1):\Xi(0)=0\}$
\[
 \mathfrak q[\Xi]
 =\lambda\int_0^1|\Xi'|^2\,\dd x-\int_0^1V\Xi^2\,\dd x.
\]
The associated operator is
\begin{equation}\label{eq:p2-linearized}
 \mathcal A\Xi=-\lambda\Xi''-V\Xi,\qquad
 \Xi(0)=0,\quad\Xi'(1)=0.
\end{equation}
The potential satisfies $V\leq C/x$, so the form is closed and has compact
resolvent.  The tangent space is
\[
 \mathcal T_F=\left\{\Xi\in\mathcal H_0:
 \ell(\Xi):=\int_0^1F'\Xi'\,\dd x=0\right\},
\]
and we write
\[
 C^2_{\rm DN}:=\{F\in C^2([0,1]):F(0)=0,\ F'(1)=0\}.
\]

\begin{proposition}\label{prop:p2-spectral-reduction}
Let $f=F'\in\mathcal N_2$, as above.  The operator $\mathcal A$ has Morse index one and
$\ker\mathcal A=\{0\}$.  Consequently there are $\delta>0$ and a $C^1$
curve $\tilde\lambda\mapsto F_{\tilde\lambda}\in C^2_{\rm DN}$,
$|\tilde\lambda-\lambda|<\delta$, with $F_\lambda=F$, such that
$F_{\tilde\lambda}$ solves $-\tilde\lambda F''=F/(x-F^2)$, is positive on
$(0,1]$, satisfies $\sup_{0<x\leq1}F_{\tilde\lambda}^2/x<1$, and is the
only solution of this equation in $C^2_{\rm DN}$ near $F$.  Set
\[
 E(\tilde\lambda)=\int_0^1F_{\tilde\lambda}'^2\,\dd x,\qquad
 A(\tilde\lambda)=F_{\tilde\lambda}(1).
\]
Then the following are equivalent:
\begin{enumerate}
 \item $\mathfrak q$ is coercive on $\mathcal T_F$;
 \item $\dd E/\dd\lambda(\lambda)>0$;
 \item $\dd A/\dd\lambda(\lambda)>0$.
\end{enumerate}
\end{proposition}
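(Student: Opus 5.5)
The proof has four steps. First, bound the Morse index from both sides using maximality and a test function. Second, exclude a kernel using the scaling structure of the equation. Third, obtain the curve $F_{\tilde\lambda}$ from the implicit function theorem. Fourth, prove the equivalences by the standard Vakhitov--Kolokolov computation with a Riesz representative of $\ell$.

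\emph{Index one.} Write $\mathcal J_2$ as a functional of $F$. The pointwise second derivative of $-\log(1-F^2/x)$ in $F$ is $2V$, and the Euler--Lagrange equation \eqref{eq:p2-bvp} comes from the Lagrangian $\mathcal J_2-\lambda(\int F'^2-1)$. Hence the constrained Hessian at $F$ is $-2\mathfrak q$ on $\mathcal T_F$. Since $f$ is a maximizer, $\mathfrak q\geq0$ on $\mathcal T_F$, which has codimension one, so the Morse index is at most one.

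For the lower bound, test with $F$ itself. Multiplying the equation by $F$ gives $\lambda\int F'^2=\int UF^2$. Since $V-U=2F^2/(x-F^2)^2>0$, this yields $\mathfrak q[F]=\int(U-V)F^2<0$. All integrations by parts are justified by $\sup F^2/x<1$ and the boundedness of $f$ from Proposition~\ref{prop:euler-reconstruction}.

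\emph{Trivial kernel.} Let $\Xi\in\ker\mathcal A$, and decompose $\Xi=\tau+cF$ with $\tau\in\mathcal T_F$; this is possible because $\ell(F)=E\neq0. $ From $\mathfrak q(\Xi,\cdot)=0$ we get $\mathfrak q(\tau,\tau)=c^2\mathfrak q[F]$. Since $\mathfrak q(\tau,\tau)\geq0$ and $\mathfrak q[F]<0$, this forces $c=0$, so $\ell(\Xi)=0$.

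Next use the scaling of the equation. If $F$ solves it with parameter $\lambda$, then $F_t(x)=t^{1/2}F(x/t)$ solves it with parameter $t\lambda$. Differentiating at $t=1$ gives $G:=\tfrac12F-xF'$, which satisfies $G(0)=0$ and $\mathcal AG=\lambda F''=-UF$. Integrating $G\,\mathcal A\Xi-\Xi\,\mathcal AG$ over $(0,1)$ gives
\[
 \int_0^1UF\Xi=\lambda\,\Xi(1)G'(1).
\]
The left side equals $\lambda\ell(\Xi)=0$. On the right, $G'(1)=-F''(1)=F(1)/(\lambda(1-F(1)^2))\neq0$. Hence $\Xi(1)=\Xi'(1)=0$. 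Since $x=1$ is a regular point of the ODE, uniqueness gives $\Xi\equiv0$.

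\emph{The curve.} Apply the implicit function theorem to
\[
 (\tilde\lambda,F)\mapsto\tilde\lambda F''+F/(x-F^2)
\]
on $C^2_{\rm DN}$, in a neighbourhood where $\sup F^2/x<1$; the weight $1/x$ is handled by the bound $V\leq C/x$. The linearization is $-\mathcal A$, which is invertible because its kernel is trivial. This gives the $C^1$ curve, its local uniqueness, and positivity of $F_{\tilde\lambda}$ by continuity.

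\emph{The equivalences.} Set $\dot F=\partial_\lambda F_\lambda$. Differentiating the equation gives $\mathcal A\dot F=F''$ with the same boundary conditions. Since $\Xi(0)=0$ and $F'(1)=0$, the functional is $\ell(\Xi)=\int(-F'')\Xi$, so $-F''$ represents $\ell$ and $\mathcal A^{-1}(-F'')=-\dot F$. For a form with one negative eigenvalue and no kernel, the standard lemma says that $\mathfrak q$ is coercive on $\{\ell=0\}$ if and only if $\langle\mathcal A^{-1}(-F''),-F''\rangle<0$. Here
\[
 \langle\mathcal A^{-1}(-F''),-F''\rangle=\int\dot FF''=-\int F'\dot F'=-\tfrac12E'(\lambda),
\]
so (i) holds if and only if (ii) holds.

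For (ii)$\Leftrightarrow$(iii), pair $\dot F$ with $F$ and with $G$ through Green's formula. The boundary terms are $\lambda\dot F(1)F'(1)=0$ and $\lambda\dot F(1)G'(1)$, and $\dot F(1)=A'(\lambda)$. One can also differentiate the Pohozaev identity \eqref{eq:p2-pohozaev} along the curve. The aim is an identity of the form $E'(\lambda)=\kappa A'(\lambda)$ with an explicit $\kappa>0$. I expect this bookkeeping to be the main obstacle: the terms $\int GF''$ and $\int\dot FUF$ have to be eliminated using $\lambda E=\int UF^2$ and its $\lambda$-derivative. If a clean identity does not emerge, the fallback is to express both $E'$ and $A'$ through the single scalar $\dot F(1)$ via the Wronskian with $G$, and then compare signs.
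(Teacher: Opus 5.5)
Your treatment of the Morse index, the trivial kernel, the implicit-function curve, and (i)$\Leftrightarrow$(ii) matches the paper's argument essentially line by line. For the kernel you decompose $\Xi=\tau+cF$ and then apply Green's identity against the scaling generator $G=\tfrac12F-xF'$, using $G'(1)=F(1)/(\lambda(1-F(1)^2))$. For (i)$\Leftrightarrow$(ii) you use the Schur complement $\langle\mathcal A^{-1}(-F''),-F''\rangle=-\tfrac12E'(\lambda)$.

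The genuine gap is (ii)$\Leftrightarrow$(iii): you list candidate strategies and stop, so this equivalence is not proved. Your first suggestion closes it in two lines, and no elimination through the $\lambda$-derivative of $\lambda E=\int UF^2$ is needed. Pair $\dot F$ with $G$ only. Since $\dot F(0)=G(0)=0$, both have bounded derivatives, and $\dot F'(1)=0$,
\[
 \int_0^1\bigl(G\,\mathcal A\dot F-\dot F\,\mathcal AG\bigr)\,\dd x
 =\lambda\,\dot F(1)\,G'(1)=\frac{A\,A'(\lambda)}{1-A^2}.
\]
The left side equals $\int_0^1GF''+\int_0^1\dot F\,UF$. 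The first integral vanishes by direct integration by parts, because $\int FF''=-E$ and $\int xF'F''=-\tfrac12E$; equivalently, $\ell(G)=0$. For the second, the equation gives $\int_0^1\dot F\,UF=-\lambda\int_0^1\dot F F''=\lambda\int_0^1F'\dot F'=\tfrac\lambda2E'(\lambda)$. Hence
\[
 \lambda E'(\lambda)=\frac{2A\,A'(\lambda)}{1-A^2},
\]
and $0<A<1$ gives the equivalence, with $\kappa=2A/(\lambda(1-A^2))$.

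The paper reaches the same identity by a different route. It differentiates the Pohozaev identity $\mathcal J_2(F'_{\tilde\lambda})=-\log(1-A(\tilde\lambda)^2)$, which holds along the whole curve because Proposition~\ref{prop:p2-pohozaev} requires no normalization. It then uses the fact that each $F_{\tilde\lambda}$ satisfies $D\mathcal J_2(F'_{\tilde\lambda})=2\tilde\lambda\langle F'_{\tilde\lambda},\cdot\,\rangle$, so the left side differentiates to $\lambda E'(\lambda)$. Your Green's-formula version stays at the linearized level and reuses $\ell(G)=0$ and $G'(1)$ from the kernel step; it is the same computation the paper later performs for the endpoint formula. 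The paper's version uses the variational structure instead.

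A minor point: in the implicit-function step, trivial kernel alone does not make $-\mathcal A$ an isomorphism $C^2_{\rm DN}\to C([0,1])$. You also need surjectivity, which follows by variation of constants with $\varphi_0$. Continuity of $xV$ on $[0,1]$, rather than the bound $V\leq C/x$, is what makes the map $C^1$ into $C([0,1])$.
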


\begin{proof}
The second variation gives $\mathfrak q\geq0$ on the codimension-one
space $\mathcal T_F$, while
\[
 \mathfrak q[F]
 =-2\int_0^1\frac{F^4}{(x-F^2)^2}\,\dd x<0.
\]
Hence the Morse index is one.

Next we show that $\ker\mathcal A=\{0\}$.  The differential equation is
invariant under $F_\tau(x)=\sqrt\tau F(x/\tau)$, $\lambda_\tau=\tau\lambda$.
Differentiating at $\tau=1$ shows that
\[
 \Psi=\frac12F-xF'
\]
satisfies
\[
 \mathcal A\Psi=-UF,\qquad
 \Psi(0)=0,\qquad
 \Psi'(1)=\frac{F(1)}{\lambda(1-F(1)^2)}>0.
\]
If $Y\in\ker\mathcal A$ and $\ell(Y)\neq0$, then
$Y-\ell(Y)F\in\mathcal T_F$ and
\[
 \mathfrak q[Y-\ell(Y)F]=\ell(Y)^2\mathfrak q[F]<0,
\]
a contradiction.  Thus $\ell(Y)=0$, or equivalently $\int UFY=0$.
Green's identity applied to $Y,\Psi$ now gives
\[
 0=\int(Y\mathcal A\Psi-\Psi\mathcal AY)
   =-\lambda Y(1)\Psi'(1).
\]
Thus $Y(1)=Y'(1)=0$, and ODE uniqueness gives $Y=0$.

The curve $\tilde\lambda\mapsto F_{\tilde\lambda}$ is obtained from the
implicit function theorem, applied to the map
$(\tilde\lambda,G)\mapsto\tilde\lambda G''+G/(x-G^2)$ from
$(0,\infty)\times C^2_{\rm DN}$ to $C([0,1])$ near $(\lambda,F)$.  Since
$xV$ is continuous on $[0,1]$, the derivative $-\mathcal A$ in $G$ maps
$C^2_{\rm DN}$ boundedly into $C([0,1])$.  Because $\ker\mathcal A=\{0\}$,
variation of constants shows that it is an isomorphism (see also
Lemma~\ref{lem:p2-kernel}).  Since $F(x)/x$ is bounded below by a positive
constant on $(0,1]$ and $F_{\tilde\lambda}(x)/x\to F(x)/x$ uniformly as
$\tilde\lambda\to\lambda$, the functions $F_{\tilde\lambda}$ are positive
on $(0,1]$ and satisfy $\sup F_{\tilde\lambda}^2/x<1$ for $\tilde\lambda$
close to $\lambda$.  Since
\[
 \ell(\Xi)=\frac1\lambda\int UF\Xi,
\]
write $b=UF/\lambda$.  Differentiating the equation at
$\tilde\lambda=\lambda$ gives
\[
 \mathcal A\dot F=F''=-b,
 \qquad
 \dot F:=\partial_{\tilde\lambda}F_{\tilde\lambda}\big|_{\tilde\lambda=\lambda}.
\]
The Schur complement of $\mathfrak q$ relative to $\ker\ell$ is therefore
\[
 \langle b,\mathcal A^{-1}b\rangle
 =-\ell(\dot F)=-\frac12\frac{\dd E}{\dd\lambda}(\lambda).
\]
For an invertible index-one form, nonnegativity on $\ker\ell$ is
equivalent to this scalar being nonpositive, and coercivity is equivalent
to strict negativity \cite{Maddocks1985}.  In the stability theory of
solitary waves this is the condition of Vakhitov and Kolokolov
\cite{VK1973,GSS1987}.  This proves the equivalence with $\dd E/\dd\lambda>0$.

Finally, Proposition~\ref{prop:p2-pohozaev} gives
$\mathcal J_2(F_{\tilde\lambda}')=-\log(1-A(\tilde\lambda)^2)$.  Integrating the
equation for $F_{\tilde\lambda}$ from $y$ to $1$ and using the formula for
$D\mathcal J_p$ in the proof of Proposition~\ref{prop:euler-reconstruction}, we
obtain $D\mathcal J_2(F_{\tilde\lambda}')=2\tilde\lambda\langle
F_{\tilde\lambda}',\cdot\,\rangle$.  Differentiating at
$\tilde\lambda=\lambda$ yields
\[
 \lambda\frac{\dd E}{\dd\lambda}(\lambda)
 =\frac{2A(\lambda)A'(\lambda)}{1-A(\lambda)^2},
\]
and $A(\lambda)>0$ proves the last equivalence.
\end{proof}

The criterion reduces nondegeneracy to the sign of $\dd E/\dd\lambda$.
We next express this derivative through the endpoint values of a solution
of the linearized equation.

\subsection{An endpoint formula for \texorpdfstring{$\dd E/\dd\lambda$}{dE/dlambda}}
\label{subsec:p2-endpoint}

Throughout this subsection, $(\lambda,F)$ denotes a solution of
\begin{equation}\label{eq:p2-dn}
 -\lambda F''=\frac F{x-F^2},\qquad F(0)=0,\quad F'(1)=0,
\end{equation}
with $\lambda>0$, $F>0$ on $(0,1]$, $F'$ bounded, and
$\sup_{0<x\leq1}F(x)^2/x<1$.  No normalization is imposed on
$\int_0^1F'^2$.  By Proposition~\ref{thm:strict-contact} and
Proposition~\ref{prop:euler-reconstruction}, every $f\in\mathcal N_2$
gives such a solution.  We keep the notation
$U,V,\mathcal A,\Psi$ introduced above.  The same computation shows,
for every solution of \eqref{eq:p2-dn} and
with $A:=F(1)$,
\begin{equation}\label{eq:p2-psi-facts}
 \mathcal A\Psi=-UF,\qquad \Psi(0)=0,\qquad
 \Psi(1)=\frac A2,\qquad \Psi'(1)=\frac{A}{\lambda(1-A^2)}.
\end{equation}
Since $F(x)\leq Cx$ and $\sup F^2/x<1$, we have $V\leq C/x$ and
$F''=-UF/\lambda$ is bounded.  Let $\varphi_0$ be the solution of
\[
 -\lambda\varphi''=V\varphi,\qquad \varphi(0)=0,\quad \varphi'(0)=1,
\]
obtained as $\varphi_0=x\vartheta$, where $\vartheta$ is the unique
solution of the Volterra equation
$\vartheta(x)=1-\lambda^{-1}\int_0^x(1-s/x)\,sV(s)\,\vartheta(s)\,\dd s$,
whose kernel is bounded because $sV(s)$ is.

\begin{lemma}\label{lem:p2-kernel}
Every $w\in H^1(0,1)$ with $w(0)=0$ and $-\lambda w''=Vw$ on $(0,1]$ is a
multiple of $\varphi_0$.  In particular, $\mathcal A$ is invertible if and
only if $\varphi_0'(1)\neq0$.
\end{lemma}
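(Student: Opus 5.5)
We must show that solutions of $-\lambda w''=Vw$ with $w(0)=0$, $w\in H^1$, form a one-dimensional space spanned by $\varphi_0$, and then deduce the invertibility criterion. The singularity of $V$ at $x=0$ is only of order $1/x$, with $xV$ bounded (indeed continuous on $[0,1]$). So the plan is to show that every such $w$ satisfies $w(x)=cx+o(x)$ with $w'$ continuous up to $0$. Then ODE uniqueness on $[0,1]$ in the Volterra formulation identifies $w=c\varphi_0$.

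\textbf{Regularity near $0$.} On $(0,1]$ the equation is regular, so $w\in C^2(0,1]$. Since $w\in H^1$ and $w(0)=0$, we have $|w(x)|\leq\|w'\|_{L^2}x^{1/2}$. Hence $|w''|\leq\lambda^{-1}|Vw|\leq Cx^{-1/2}$, which is integrable. Thus $w'$ extends continuously to $[0,1]$, and
\[
 w'(x)=w'(0)-\lambda^{-1}\int_0^xV w.
\]
This bootstraps to $|w(x)|\leq Cx$, so $Vw$ is bounded and $w\in C^1([0,1])$. Set $c=w'(0)$. Integrating again gives
\[
 w(x)=cx-\lambda^{-1}\int_0^x(x-s)V(s)w(s)\,\dd s.
\]
Writing $w=x\eta$, this is exactly the Volterra equation satisfied by $c\vartheta$:
\[
 \eta(x)=c-\lambda^{-1}\int_0^x(1-s/x)\,sV(s)\eta(s)\,\dd s.
\]
Its kernel is bounded, by $\sup sV$, and $\eta$ is bounded. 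The standard Gronwall estimate for bounded-kernel Volterra equations therefore gives uniqueness, so $\eta=c\vartheta$ and $w=c\varphi_0$. The only delicate point is this first bootstrap, which rules out solutions with a worse singularity such as $x\log x$-type behaviour at $0$. Here $H^1$ together with $xV$ bounded suffices, since Frobenius indices of $-\lambda w''=(k/x)w$ are $0$ and $1$ with no true singular branch beyond a logarithmic correction in the second solution. The non-multiple solution behaves like $1+O(x\log x)$ and violates $w(0)=0$.

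\textbf{The invertibility criterion.} The form $\mathfrak q$ has compact resolvent, so $\mathcal A$ is invertible exactly when $\ker\mathcal A=\{0\}$. An element of the kernel is a weak solution in $\mathcal H_0$; the natural boundary condition of the form gives $Y'(1)=0$. By the first part, $Y=c\varphi_0$, so a nontrivial kernel exists if and only if $\varphi_0'(1)=0$.

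To see that weak solutions are classical on $(0,1]$, test against compactly supported functions, which gives the ODE in the distributional sense and hence classically. The boundary condition at $1$ is then recovered by testing with functions that do not vanish at $1$. Conversely, if $\varphi_0'(1)=0$, then $\varphi_0\in\mathcal H_0$ lies in the kernel.
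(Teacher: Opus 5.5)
Your proof is correct and essentially the paper's: the same bound $|w(x)|\leq\sqrt x\,\lVert w'\rVert_2$ makes $Vw$ integrable, so $w'(0^+)$ exists, and the invertibility criterion is deduced the same way from compact resolvent and the Neumann condition at $1$. The only difference is the final step: you bootstrap to $|w|\leq Cx$ and invoke uniqueness for the bounded-kernel Volterra equation that defines $\vartheta$, whereas the paper notes that the Wronskian $w\varphi_0'-w'\varphi_0$ is constant and tends to zero at $0$; the two are equivalent, and your Frobenius remark is not needed.
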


\begin{proof}
Since $|w(x)|\leq\sqrt x\,\lVert w'\rVert_2$, the function $Vw$ is
integrable near zero, so $w'$ has a finite limit at zero.  The Wronskian
$w\varphi_0'-w'\varphi_0$ is constant and tends to zero at $x=0$; hence $w$
and $\varphi_0$ are linearly dependent.  The kernel of $\mathcal A$
consists of such $w$ with $w'(1)=0$, and $\mathcal A$ has compact
resolvent.
\end{proof}

If $\mathcal A$ is invertible, the implicit-function theorem in the
setting of the proof of Proposition~\ref{prop:p2-spectral-reduction} gives
a local $C^1$ curve $\lambda\mapsto F_\lambda\in C^2_{\rm DN}$ of solutions of
\eqref{eq:p2-dn} through $F$, unique near $(\lambda,F)$.  As before,
$E(\lambda)=\int_0^1F_\lambda'^2$ and $A(\lambda)=F_\lambda(1)$.

\begin{proposition}\label{prop:p2-endpoint}
Let $(\lambda,F)$ be as above with $\mathcal A$ invertible, and set
$\varrho:=\varphi_0(1)/\varphi_0'(1)$.  Then
\begin{equation}\label{eq:p2-endpoint}
 \frac{\dd E}{\dd\lambda}(\lambda)
 =\frac{2\Psi'(1)}{\lambda}\bigl(\Psi(1)-\Psi'(1)\varrho\bigr)
 =\frac{A^2}{\lambda^2(1-A^2)}
  \left[1-\frac{2\varrho}{\lambda(1-A^2)}\right].
\end{equation}
\end{proposition}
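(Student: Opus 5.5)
We compute $\dot F=\partial_\lambda F_\lambda$ explicitly in terms of $\Psi$ and $\varphi_0$, and then evaluate $\tfrac{\dd E}{\dd\lambda}=2\ell(\dot F)$ through boundary terms. Differentiating the equation along the curve, as in the proof of Proposition~\ref{prop:p2-spectral-reduction}, gives
\[
 \mathcal A\dot F=F''=-UF/\lambda,\qquad \dot F(0)=0,\quad \dot F'(1)=0 .
\]
By \eqref{eq:p2-psi-facts}, $\mathcal A(\Psi/\lambda)=-UF/\lambda$ as well, so $\dot F-\Psi/\lambda$ solves the homogeneous equation and vanishes at $0$. Lemma~\ref{lem:p2-kernel} then yields $\dot F=\lambda^{-1}\Psi+c\varphi_0$. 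The Neumann condition at $1$ fixes the constant: $c=-\Psi'(1)/(\lambda\varphi_0'(1))$, which is legitimate because invertibility of $\mathcal A$ means $\varphi_0'(1)\neq0$. Hence
\[
 \dot F(1)=\frac1\lambda\bigl(\Psi(1)-\Psi'(1)\varrho\bigr).
\]

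Next we express $\tfrac{\dd E}{\dd\lambda}$ through $\dot F(1)$. Since $E=\int F_\lambda'^2$, we have
\[
 \tfrac{\dd E}{\dd\lambda}=2\int F'\dot F'=2\ell(\dot F).
\]
Integrating by parts and using $F'(1)=0$ and $\dot F(0)=0$ gives $\ell(\dot F)=-\int F''\dot F=\lambda^{-1}\int UF\dot F$. The cleanest way to evaluate this is Green's identity between $\dot F$ and $\Psi$. Since $\mathcal A\dot F=\lambda^{-1}\mathcal A\Psi=-UF/\lambda$,
\[
 \int(\Psi\,\mathcal A\dot F-\dot F\,\mathcal A\Psi)=\frac1\lambda\int UF(\dot F-\Psi),
\]
while the left-hand side equals the boundary term $-\lambda[\Psi\dot F'-\dot F\Psi']_0^1=\lambda\dot F(1)\Psi'(1)$. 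This uses $\dot F'(1)=0$ and that both functions vanish at $0$ with bounded derivatives.

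It then remains to know $\int UF\Psi$. For this we compute $\ell(\Psi)=\int F'\Psi'$ directly. From $\Psi'=-\tfrac12F'-xF''$ we get
\[
 \int F'\Psi'=-\tfrac12E-\int xF'F'' ,
\]
and $\int xF'F''=\bigl[\tfrac12 xF'^2\bigr]_0^1-\tfrac12E=-\tfrac12E$ because $F'(1)=0$. So $\ell(\Psi)=0$, i.e.\ $\int UF\Psi=0$. Combining,
\[
 \lambda^{-1}\int UF\dot F=\lambda\dot F(1)\Psi'(1),
\]
so
\[
 \frac{\dd E}{\dd\lambda}=\frac{2}{\lambda}\int UF\dot F\cdot\frac1\lambda\cdot\lambda
 =\frac{2\Psi'(1)}{\lambda}\bigl(\Psi(1)-\Psi'(1)\varrho\bigr).
\]
The factors of $\lambda$ need to be tracked carefully at this point, and I would recheck them against the Schur-complement formula $\langle b,\mathcal A^{-1}b\rangle=-\ell(\dot F)$, with $b=UF/\lambda$, from the earlier proposition. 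Substituting $\Psi(1)=A/2$ and $\Psi'(1)=A/(\lambda(1-A^2))$ gives the second expression in \eqref{eq:p2-endpoint} by algebra.

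The main obstacle is justifying the boundary terms at $x=0$, where $V\sim C/x$ is singular. We need $\dot F,\Psi,\varphi_0$ to vanish at $0$ with bounded derivatives, so that $[\Psi\dot F'-\dot F\Psi'](0)=0$, and $UF\Psi$ to be integrable. For $\varphi_0=x\vartheta$ with $\vartheta$ from the Volterra equation, and for $\Psi$ (where $F'$ bounded and $xF''$ bounded), these facts are immediate. For $\dot F\in C^2_{\rm DN}$ they follow from the implicit-function setting.
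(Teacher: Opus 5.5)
Your route is essentially the paper's: you use the same decomposition $\dot F=\Psi/\lambda+c\varphi_0$, the same identity $\ell(\Psi)=0$, and a Green identity. The only difference is that you pair $\Psi$ with $\dot F$ rather than with $\varphi_0$. This gives the clean intermediate formula $\frac{\dd E}{\dd\lambda}=2\Psi'(1)\dot F(1)=2\Psi'(1)A'(\lambda)$, which is consistent with $\lambda\,\frac{\dd E}{\dd\lambda}=2AA'/(1-A^2)$ from Proposition~\ref{prop:p2-spectral-reduction}.

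The $\lambda$-bookkeeping you were worried about is wrong as written, though the fix is local. Since $\mathcal A\Psi=-UF$ but $\mathcal A\dot F=-UF/\lambda$, the Green integrand is $\int UF\dot F-\lambda^{-1}\int UF\Psi$, not $\lambda^{-1}\int UF(\dot F-\Psi)$. The correct conclusion is therefore $\int UF\dot F=\lambda\dot F(1)\Psi'(1)$. Then $\frac{\dd E}{\dd\lambda}=2\lambda^{-1}\int UF\dot F=2\Psi'(1)\dot F(1)$, which yields \eqref{eq:p2-endpoint}. Your displayed line $\lambda^{-1}\int UF\dot F=\lambda\dot F(1)\Psi'(1)$ would give an answer too large by a factor $\lambda$, so your stated final formula is reached only through a compensating slip.
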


\begin{proof}
Write $\dot F=\partial_\lambda F_\lambda$.  Differentiating
\eqref{eq:p2-dn} gives $\mathcal A\dot F=F''=-UF/\lambda$ with
$\dot F(0)=0$ and $\dot F'(1)=0$.  By \eqref{eq:p2-psi-facts},
$\dot F-\Psi/\lambda$ solves the homogeneous equation and vanishes at
zero, so Lemma~\ref{lem:p2-kernel} gives
$\dot F=\Psi/\lambda+c_0\varphi_0$.  The Neumann condition yields
$c_0=-\Psi'(1)/(\lambda\varphi_0'(1))$.

Moreover $\ell(\Psi)=0$:
\[
 \int_0^1F'\Psi'
 =\int_0^1F'\Bigl(-\frac12F'-xF''\Bigr)
 =-\frac12E-\Bigl[\frac12xF'^2\Bigr]_0^1+\frac12E=0.
\]
Hence, integrating by parts and using $F'(1)=0$,
\[
 \frac12\frac{\dd E}{\dd\lambda}=\int_0^1F'\dot F'
 =c_0\int_0^1F'\varphi_0'
 =-c_0\int_0^1F''\varphi_0
 =\frac{c_0}\lambda\int_0^1UF\varphi_0.
\]
Green's identity for $\Psi$ and $\varphi_0$, whose boundary term at zero
vanishes because $\Psi'$ and $\varphi_0'$ are bounded and
$\Psi(0)=\varphi_0(0)=0$, gives
\[
 \int_0^1UF\varphi_0
 =\lambda\bigl(\varphi_0(1)\Psi'(1)-\Psi(1)\varphi_0'(1)\bigr).
\]
Substituting $c_0$ proves the first identity in
\eqref{eq:p2-endpoint}.  The second follows from
\eqref{eq:p2-psi-facts}.
\end{proof}

\begin{corollary}\label{cor:p2-robin}
At every $f\in\mathcal N_2$, the equivalent conditions of
Proposition~\ref{prop:p2-spectral-reduction} hold if and only if
\[
 \frac{\varphi_0(1)}{\varphi_0'(1)}<\frac{\lambda(1-A^2)}2
 =\frac{\Psi(1)}{\Psi'(1)}.
\]
\end{corollary}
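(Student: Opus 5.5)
The corollary follows by combining Proposition~\ref{prop:p2-spectral-reduction} with the endpoint formula of Proposition~\ref{prop:p2-endpoint}.

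Let $f=F'\in\mathcal N_2$ with multiplier $\lambda$. By Proposition~\ref{prop:p2-spectral-reduction}, $\ker\mathcal A=\{0\}$. Lemma~\ref{lem:p2-kernel} then gives $\varphi_0'(1)\neq0$, so $\varrho=\varphi_0(1)/\varphi_0'(1)$ is well defined. Propositions~\ref{thm:strict-contact} and~\ref{prop:euler-reconstruction} show that $(\lambda,F)$ satisfies the standing hypotheses of Subsection~\ref{subsec:p2-endpoint}. Hence Proposition~\ref{prop:p2-endpoint} applies to the same local curve $\tilde\lambda\mapsto F_{\tilde\lambda}$, which is unique near $(\lambda,F)$. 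That formula gives
\[
\frac{\dd E}{\dd\lambda}(\lambda)=\frac{A^2}{\lambda^2(1-A^2)}\Bigl[1-\frac{2\varrho}{\lambda(1-A^2)}\Bigr].
\]

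We next check the sign of the prefactor. We have $A=F(1)>0$ because $F>0$ on $(0,1]$. We also have $A^2=Z_f(1)<1$ by \eqref{eq:uniform-noncontact}, and $\lambda>0$. So the prefactor is strictly positive. Condition (ii) of Proposition~\ref{prop:p2-spectral-reduction}, namely $\dd E/\dd\lambda>0$, is therefore equivalent to $1-2\varrho/(\lambda(1-A^2))>0$. Multiplying by the positive number $\lambda(1-A^2)/2$, this is equivalent to $\varrho<\lambda(1-A^2)/2$.

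Finally, the identity $\lambda(1-A^2)/2=\Psi(1)/\Psi'(1)$ is read off directly from \eqref{eq:p2-psi-facts}:
\[
\Psi(1)=\frac A2,\qquad \Psi'(1)=\frac{A}{\lambda(1-A^2)},
\]
so the quotient equals $\lambda(1-A^2)/2$. Here $A\ne0$, so the division is legitimate.

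The only point needing care is the first step: confirming that invertibility of $\mathcal A$ holds for members of $\mathcal N_2$. This is exactly what Proposition~\ref{prop:p2-spectral-reduction} provides, and it is what makes $\varrho$ finite and the endpoint formula available.
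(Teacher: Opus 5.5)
Your proposal is correct and matches the paper's proof, which takes the invertibility of $\mathcal A$ at a maximizer from Proposition~\ref{prop:p2-spectral-reduction} and then applies the endpoint formula \eqref{eq:p2-endpoint}. You also supply the details the paper leaves implicit: that the prefactor is positive because $A>0$, $A^2<1$ and $\lambda>0$, and that $\Psi(1)/\Psi'(1)=\lambda(1-A^2)/2$ follows from \eqref{eq:p2-psi-facts}.
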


\begin{proof}
By Proposition~\ref{prop:p2-spectral-reduction}, at a maximizer
$\mathcal A$ is invertible and has Morse index one; apply
\eqref{eq:p2-endpoint}.
\end{proof}

\subsection{The deficit controls a power of the distance}

The compactness of $\mathcal N_2$ and the analyticity of $\mathcal J_2$
near its maximizers give a deficit estimate with a finite power.  If the
second variation is coercive, the power is two.  The strict slope condition
needed for this conclusion will be established in
Section~\ref{sec:p2-quadratic}.

\begin{theorem}
\label{thm:p2-finite-power}
There exist an integer $m\geq2$ and a constant $c>0$ such that every
nonnegative $f\in L^2(0,1)$ with $\|f\|_2\leq1$ satisfies
\begin{equation}\label{eq:p2-finite-power}
 \mathsf J_2-\mathcal J_2(f)
 \geq c\,\operatorname{dist}_{L^2}(f,\mathcal N_2)^m.
\end{equation}
If the equivalent conditions in
Proposition~\ref{prop:p2-spectral-reduction} hold at every maximizer,
then one may take $m=2$.
\end{theorem}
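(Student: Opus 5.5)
We argue by contradiction for the global statement, after proving a local estimate near $\mathcal N_2$. The plan has three steps.

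\emph{Step 1: reduce to the sphere.} For $f\geq0$ with $r=\|f\|_2\leq1$ we have $\mathcal J_2(f/r)\geq\mathcal J_2(f)$, since $Z_{f/r}=r^{-2}Z_f$. In fact
\[
 \mathcal J_2(f/r)-\mathcal J_2(f)\geq\int_0^1Z_f\,(r^{-2}-1)\,\dd x,
\]
which is bounded below by a positive multiple of $1-r$ once $f$ stays away from zero. Also
\[
 \operatorname{dist}(f,\mathcal N_2)\leq(1-r)+\operatorname{dist}(f/r,\mathcal N_2).
\]
Since $m\geq2$ and the distances are bounded, it suffices to treat $\|f\|_2=1$. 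The case of small $f$ is trivial, because there $\mathcal J_2(f)$ is bounded away from $\mathsf J_2$ while the distance is at most $2$.

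\emph{Step 2: local estimate via analyticity.} By Proposition~\ref{thm:strict-contact}, the set $\mathcal N_2$ is compact and uniformly bounded away from contact: $\sup_{\mathcal N_2}\sup Z_f\leq1-2\eta$. By the continuity of $f\mapsto\sup Z_f$, the same bound with $\eta$ holds on an $L^2$-neighbourhood $\mathcal U$ of $\mathcal N_2$. On $\mathcal U$ the map $f\mapsto\mathcal J_2(f)=\int-\log(1-F^2/x)$ is real-analytic. It is a composition of the bounded linear map $f\mapsto F/\sqrt x\in L^\infty$ with an analytic function on $\{|s|<1\}$, so its power series converges in $L^\infty$. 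Restrict it to the unit sphere, an analytic Hilbert manifold.

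Near each $f_0\in\mathcal N_2$, take the Lyapunov--Schmidt reduction. The Hessian of the restriction is $\mathfrak q$ transported via $\Xi=\int\varphi$. This is coercive on the tangent space modulo the finite-dimensional kernel $K$: a compact perturbation of $\lambda\|\cdot\|^2$, since $V\leq C/x$ and Hardy's inequality make $\int V\Xi^2$ compact. Writing $\varphi=k+\varphi^\perp$, the implicit function theorem solves the $K^\perp$-component of the Euler--Lagrange equation analytically, $\varphi^\perp=\Theta(k)$. This gives
\[
 \mathsf J_2-\mathcal J_2\geq c\|\varphi^\perp-\Theta(k)\|^2+R(k),
\]
where $R$ is a real-analytic function on a ball in $K\cong\R^d$, with $R\geq0$ and zero set corresponding to $\mathcal N_2$ near $f_0$. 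The \L ojasiewicz inequality for analytic functions gives $R(k)\geq c\,\operatorname{dist}(k,R^{-1}(0))^{m_0}$ for some integer $m_0$.

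Combining the two pieces yields the local bound with $m=\max(2,m_0)$. The point needing care is that the distance to $R^{-1}(0)$ in $K$ controls the $L^2$ distance to $\mathcal N_2$, which holds because $\Theta$ is Lipschitz. Compactness of $\mathcal N_2$ lets finitely many such neighbourhoods cover it, and we take the largest exponent. If the conditions of Proposition~\ref{prop:p2-spectral-reduction} hold at every maximizer, then $K=\{0\}$, the maximizers are isolated (hence finite), and $m=2$ directly from coercivity via a Taylor expansion with uniform $C^3$ bounds on $\mathcal U$.

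\emph{Step 3: globalize.} Outside the neighbourhood, suppose the estimate failed along a sequence $f_k$ with distance at least $\rho>0$. Then $\mathcal J_2(f_k)\to\mathsf J_2$, contradicting \eqref{eq:qual-stability} of Proposition~\ref{thm:dual-extremizers}. Hence the deficit is bounded below by a positive constant there, which dominates $c\cdot2^m$.

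The main obstacle is Step 2: making the Lyapunov--Schmidt and \L ojasiewicz argument rigorous on the sphere. That requires the Fredholm property of the Hessian and uniform analyticity. To avoid chart issues, I would first try to parametrize the sphere by $f=(f_0+\varphi)/\|f_0+\varphi\|$ with $\varphi\perp f_0$.
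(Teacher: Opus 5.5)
Your proposal is correct and follows essentially the same route as the paper. The shared steps are: real analyticity of $\mathcal J_2$ on a neighbourhood of $\mathcal N_2$ where $\sup Z_f<1$; the chart $(f_0+\varphi)/\|f_0+\varphi\|_2$ with $\varphi\perp f_0$; a Lyapunov--Schmidt reduction onto the finite-dimensional kernel of the nonnegative Fredholm Hessian; the \L{}ojasiewicz inequality for the reduced function; a finite cover of $\mathcal N_2$ combined with \eqref{eq:qual-stability} away from it; and the passage between sphere and ball through $Z_{rh}=r^2Z_h$. Two small points need a line of justification. First, you assert without argument that the local zero set of the deficit corresponds to $\mathcal N_2$, although a priori it consists of all unit-sphere maximizers near $f_0$, possibly sign-changing; the paper shows these maximizers are exactly $\mathcal N_2\cup(-\mathcal N_2)$, and for nonnegative $f$ it also suffices that every maximizer $g$ has $|g|\in\mathcal N_2$ and $\|f-|g|\|_2\leq\|f-g\|_2$. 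Second, in Step~1 the bound $\mathcal J_2(f/r)-\mathcal J_2(f)\geq c(1-r)$ is not uniform merely for $f$ away from zero, but it does hold for $f/r$ near $\mathcal N_2$, which suffices because the deficit is bounded below elsewhere.
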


\begin{proof}
Fix $f_*\in\mathcal N_2$ and let $F_*(x)=\int_0^x f_*$.  The bound
\eqref{eq:uniform-noncontact} and
\[
 \sup_{0<x\leq1}\frac{|\int_0^x h|}{\sqrt x}\leq\|h\|_2
\]
show that $Z_h=(\int_0^x h)^2/x$ remains uniformly below one in a fixed
$L^2$ neighborhood of $f_*$.  In that neighborhood the series
\[
 \mathcal J_2(h)=\sum_{k=1}^\infty\frac1k
 \int_0^1\frac{|\int_0^x h|^{2k}}{x^k}\,\dd x
\]
converges locally uniformly together with all Fr\'echet derivatives.
Thus $\mathcal J_2$ is real analytic there.

Use the analytic chart
\[
 \Gamma_{f_*}(\varphi)
 =\frac{f_*+\varphi}{(1+\|\varphi\|_2^2)^{1/2}},
 \qquad \langle f_*,\varphi\rangle_2=0,
\]
on the unit sphere, and set
\[
 \mathcal D_{f_*}(\varphi)
 =\mathsf J_2-\mathcal J_2(\Gamma_{f_*}(\varphi)).
\]
This is a nonnegative real-analytic function which vanishes together
with its derivative at the origin.  Its Hessian is
\begin{equation}\label{eq:p2-analytic-hessian}
 2\bigl(\lambda I-\mathcal V^*M_V\mathcal V\bigr)
\end{equation}
on the tangent space, where
$\mathcal V\varphi(x)=\int_0^x\varphi$ and $V$ is defined above
\eqref{eq:p2-linearized}.  Since $V(x)\leq C/x$, the compact part in
\eqref{eq:p2-analytic-hessian} has the integral kernel
\[
 k(y,z)=\int_{\max\{y,z\}}^1V(x)\,\dd x,
 \qquad
 |k(y,z)|\leq C\bigl(1+|\log\max\{y,z\}|\bigr).
\]
The kernel is square integrable on $(0,1)^2$.  Hence the Hessian is a
nonnegative self-adjoint Fredholm operator.

Let $N_*$ be its finite-dimensional kernel and write
$\varphi=\xi+\eta$ with $\xi\in N_*$ and $\eta\in N_*^\perp$.  We use a
Lyapunov--Schmidt reduction.  The Hessian is invertible on $N_*^\perp$,
so the analytic implicit-function theorem solves
$\partial_\eta\mathcal D_{f_*}(\xi,\eta)=0$ near the origin by an analytic
map $\eta=\eta(\xi)$ with $\eta(0)=0$.  Put
$d_*(\xi):=\mathcal D_{f_*}(\xi,\eta(\xi))$, a nonnegative real-analytic
function on the finite-dimensional space $N_*$.  Since
$\partial_\eta^2\mathcal D_{f_*}$ is uniformly positive near the origin,
Taylor's formula in $\eta$ gives
\begin{equation}\label{eq:p2-analytic-splitting}
 \mathcal D_{f_*}(\xi,\eta)
 \geq d_*(\xi)+c\,\lVert\eta-\eta(\xi)\rVert^2 .
\end{equation}
In particular, the zeros of $\mathcal D_{f_*}$ near the origin are the
points $(\xi,\eta(\xi))$ with $d_*(\xi)=0$.

We identify these zeros with the nonnegative maximizers.  If a real unit
vector $h$ satisfies $\mathcal J_2(h)=\mathsf J_2$, then
$\mathcal J_2(h)\leq \mathcal J_2(|h|)\leq\mathsf J_2$, so $|h|\in\mathcal N_2$
and $Z_{|h|}$ stays strictly below one.  If both the positive and negative
parts of $h$ are nonzero, then
$|\int_0^1h|<\int_0^1|h|$.  By continuity of the primitives this strict
inequality persists on an interval next to $1$, and strict monotonicity
of $z\mapsto-\log(1-z)$ gives $\mathcal J_2(h)<\mathcal J_2(|h|)$, a contradiction.
Thus the real unit-sphere maximizers are exactly
$\mathcal N_2\cup(-\mathcal N_2)$.  Since
$\|f_*+g\|_2^2=2+2\langle f_*,g\rangle_2\geq2$ for $g\in\mathcal N_2$,
a sufficiently small neighborhood of $f_*$ excludes $-\mathcal N_2$.
Consequently the local zero set of $\mathcal D_{f_*}$ is precisely the
inverse image of $\mathcal N_2$ under $\Gamma_{f_*}$.

Applied to the real-analytic function $d_*$ on $N_*$, the
\L{}ojasiewicz inequality \cite{Lojasiewicz1965} (see \cite{Simon1983} for
its use together with a Lyapunov--Schmidt reduction) gives an exponent
$m_*\geq2$ with
\[
 d_*(\xi)\geq
 c_*\operatorname{dist}(\xi,d_*^{-1}(0))^{m_*}.
\]
The \L{}ojasiewicz exponent is in general rational; increasing it, we
may take $m_*$ to be an integer.  Since $\eta(\cdot)$ is Lipschitz, the
distance from $(\xi,\eta)$ to the zero set is at most a constant times
$\operatorname{dist}(\xi,d_*^{-1}(0))+\lVert\eta-\eta(\xi)\rVert$.
Together with \eqref{eq:p2-analytic-splitting}, this proves
\begin{equation}\label{eq:p2-local-power}
 \mathsf J_2-\mathcal J_2(h)
 \geq c_*\operatorname{dist}_{L^2}(h,\mathcal N_2)^{m_*}
\end{equation}
for unit vectors $h$ near $f_*$.  If $N_*=\{0\}$, then $\xi=0$ and
\eqref{eq:p2-analytic-splitting} gives $m_*=2$.

Compactness of $\mathcal N_2$ supplies a finite cover, so the maximum of
the finitely many local exponents gives a uniform $m$.  Away from this
cover the deficit has a positive lower bound: otherwise
Proposition~\ref{thm:dual-extremizers} would produce a maximizing
sequence staying a fixed distance from $\mathcal N_2$.  Thus
\eqref{eq:p2-local-power} holds globally on the nonnegative unit sphere.

Finally, write a nonzero point of the closed unit ball as $f=rh$, with
$\|h\|_2=1$.  Near $\mathcal N_2$, $\sup Z_h$ is bounded away from one
and $\int Z_h$ is bounded below by a positive constant.  Hence
\begin{align*}
 \mathcal J_2(h)-\mathcal J_2(rh)
 &=\int_0^1\log\frac{1-r^2Z_h}{1-Z_h}\,\dd x\\
 &\geq c(1-r)\geq c(1-r)^m.
\end{align*}
Combining this with the sphere estimate and
\[
 \operatorname{dist}_{L^2}(f,\mathcal N_2)
 \leq(1-r)+\operatorname{dist}_{L^2}(h,\mathcal N_2)
\]
proves \eqref{eq:p2-finite-power} near $\mathcal N_2$.  For $f=rh$ with
$h$ away from $\mathcal N_2$, we use $\mathcal J_2(rh)\leq \mathcal J_2(h)$ for $0\leq r\leq1$,
which holds because $Z_{rh}=r^2Z_h\leq Z_h$.  Hence these directions have a
fixed deficit, and $f=0$ is immediate.
\end{proof}

\section{Uniqueness and nondegeneracy for \texorpdfstring{$p=2$}{p=2}}
\label{sec:p2-quadratic}

We now prove Theorem~\ref{thm:B}.  The Euler--Lagrange equation is reduced
to an initial value problem with one parameter $\beta$, and every maximizer
has energy $E(\beta)=1$.  Estimates for small $\beta$ and for functions
with $\sup Z_f$ close to one confine the parameter of a maximizer to a
compact interval.  Validated integration further confines every maximizing
parameter to an interval on which $E$ is strictly increasing, and verifies
the slope condition from Section~\ref{sec:p2-spectrum}.  The stability
result of that section completes the proof.

\subsection{Reduction to an initial value problem}

We retain the assumptions and notation for positive solutions of
\eqref{eq:p2-dn} from Subsection~\ref{subsec:p2-endpoint}; in particular,
$A=F(1)$, and no normalization of $\int_0^1F'^2$ is imposed.

\begin{lemma}\label{lem:p2-shooting}
Let $(\lambda,F)$ solve \eqref{eq:p2-dn}, let $f=F'$, and set
\[
 Y:=f(0)^2,\qquad
 \varepsilon:=\frac1{\lambda Y},\qquad
 \beta:=\varepsilon^{-1/2}=\sqrt\lambda\,f(0),
\]
and $G(y):=\sqrt Y\,F(y/Y)$ for $0\leq y\leq Y$.
Then $G$ solves
\begin{equation}\label{eq:p2-ivp}
 G''=-\frac{\varepsilon G}{y-G^2},\qquad G(0)=0,\quad G'(0)=1,
\end{equation}
$Y$ is the first zero of $G'$, and
\begin{equation}\label{eq:p2-shooting-dictionary}
 \int_0^YG'^2\,\dd y=\int_0^1f^2,\quad
 \frac{G(y)^2}y=Z_f\Bigl(\frac yY\Bigr),\quad
 \frac{G(Y)^2}Y=A^2,\quad
 \lambda=\frac1{\varepsilon Y}.
\end{equation}
For every $\varepsilon>0$, problem \eqref{eq:p2-ivp} has a unique
solution on every interval $[0,y_1]$ such that $G>0$ and $G^2<y$ on
$(0,y_1]$.  Conversely, if this solution
satisfies $G>0$ and $G^2<y$ up to the first zero $Y_\varepsilon$ of $G'$,
then $F(x)=G(Y_\varepsilon x)/\sqrt{Y_\varepsilon}$ solves
\eqref{eq:p2-dn} with $\lambda=1/(\varepsilon Y_\varepsilon)$.
\end{lemma}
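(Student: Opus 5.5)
The lemma is essentially a rescaling computation combined with standard local ODE theory, and I would prove it in three steps.

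\textbf{Step 1: the rescaled equation.} Since $f=F'$ is bounded and continuous on $[0,1]$ with $f(0)>0$ (as in Proposition~\ref{prop:euler-reconstruction}; for a general solution of \eqref{eq:p2-dn} I would note that $F''=-UF/\lambda$ is bounded, so $F\in C^1([0,1])$), the quantity $Y=f(0)^2>0$ is well defined. With $G(y)=\sqrt Y\,F(y/Y)$ one has
\[
G'(y)=Y^{-1/2}f(y/Y),\qquad G''(y)=Y^{-3/2}F''(y/Y).
\]
The equation $-\lambda F''=F/(x-F^2)$ at $x=y/Y$ gives $G''=-\varepsilon G/(y-G^2)$ with $\varepsilon=1/(\lambda Y)$, after observing that $y-G^2=Y\bigl(x-F(x)^2\bigr)$. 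The initial values $G(0)=0$ and $G'(0)=Y^{-1/2}f(0)=1$ are immediate.

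\textbf{Step 2: the dictionary.} Each identity in \eqref{eq:p2-shooting-dictionary} is a one-line substitution:
\[
\int_0^Y G'^2\,\dd y=\int_0^1 f^2,\qquad \frac{G(y)^2}{y}=\frac{F(y/Y)^2}{y/Y}=Z_f(y/Y),
\]
and the value at $y=Y$ gives $A^2$. For the first zero of $G'$: the function $f$ is positive on $[0,1)$. This holds because $F''<0$ wherever $F>0$, so $f$ is strictly decreasing, and $f(1)=0$. Hence $G'>0$ on $[0,Y)$ and $G'(Y)=0$. Finally, $\lambda=1/(\varepsilon Y)$ is just the definition of $\varepsilon$ rearranged.

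\textbf{Step 3: existence, uniqueness and the converse.} This is the main obstacle, because the right-hand side $\varepsilon G/(y-G^2)$ is singular at $y=0$, where $G^2/y\to0$ but $y-G^2\to0$. I would follow the approach used for $\varphi_0$ and write $G=y\,\vartheta(y)$. The equation then becomes
\[
\vartheta(y)=1-\varepsilon\int_0^y\Bigl(1-\frac sy\Bigr)\frac{\vartheta(s)}{1-s\vartheta(s)^2}\,\dd s,
\]
since $G/(y-G^2)=\vartheta/(1-y\vartheta^2)$. The integrand is a locally Lipschitz function of $\vartheta$ as long as $s\vartheta^2<1$. A Picard (Volterra) iteration therefore gives a unique local solution near $0$ that is continuous in $\vartheta$. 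Away from $0$ the equation is a regular ODE, and on any interval where $G>0$ and $G^2<y$ the standard continuation argument together with Gronwall's inequality gives a unique extension. For uniqueness, any $C^2$ solution with $G'(0)=1$ has $G=y\vartheta$ with $\vartheta$ bounded, so it satisfies the same Volterra equation and must coincide with the one constructed.

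For the converse, assume $G>0$ and $G^2<y$ up to $Y_\varepsilon$. Reversing the scaling of Step 1 shows that $F(x)=G(Y_\varepsilon x)/\sqrt{Y_\varepsilon}$ solves $-\lambda F''=F/(x-F^2)$ with $\lambda=1/(\varepsilon Y_\varepsilon)$. The boundary conditions follow from $F(0)=0$ and $F'(1)=Y_\varepsilon^{-1/2}G'(Y_\varepsilon)=0$. The remaining properties are also transferred by the scaling: $F>0$ on $(0,1]$ and $\sup F^2/x=\sup_{(0,Y_\varepsilon]}G^2/y<1$, which is a maximum of a continuous function on a compact set once one notes $G^2/y\to0$ at $0$. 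Boundedness of $F'$ comes from $G\in C^1$.
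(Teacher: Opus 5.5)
Your proposal is correct and follows the paper's proof. The paper also obtains the equation and dictionary by substitution, gets $G'>0$ on $[0,Y)$ from $F''<0$ and $F'(1)=0$, and proves uniqueness through the Volterra equation for $g=G/y$ with bounded kernel and locally Lipschitz nonlinearity. One harmless slip: $F'(1)=\sqrt{Y_\varepsilon}\,G'(Y_\varepsilon)$ rather than $Y_\varepsilon^{-1/2}G'(Y_\varepsilon)$, which changes nothing since $G'(Y_\varepsilon)=0$.
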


\begin{proof}
The identities follow by substitution; for instance,
$F''=-UF/\lambda$ becomes $G''=-G/(\lambda Y(y-G^2))$.  Since $F''<0$ and
$F'(1)=0$, $G'>0$ on $[0,Y)$.  For uniqueness, put $g=G/y$.  Integrating
\eqref{eq:p2-ivp} twice gives
\begin{equation}\label{eq:p2-volterra}
 g(y)=1-\varepsilon\int_0^y\Bigl(1-\frac sy\Bigr)
      \frac{g(s)}{1-sg(s)^2}\,\dd s,
\end{equation}
a Volterra equation with bounded kernel and locally Lipschitz
nonlinearity.
\end{proof}

Accordingly, $\mathcal N_2$ is parametrized by $\beta$: each
$f\in\mathcal N_2$ determines $\beta=\sqrt\lambda f(0)$, and $\beta$
determines $f(x)=\sqrt Y\,G'(Yx)$.  For $\beta>0$ such that, with
$\varepsilon=\beta^{-2}$, $G'$ has a first zero $Y=Y(\beta)$ and $G>0$,
$G^2<y$ on $(0,Y]$, write
\[
 E(\beta):=\int_0^YG'^2\,\dd y,\qquad
 A(\beta)^2:=\frac{G(Y)^2}Y,\qquad
 X(\beta):=\varepsilon Y=\frac1{\lambda(\beta)}.
\]
For $\beta>0$ the two definitions agree,
$E(\beta)=\int_0^1F_{\lambda(\beta)}'^2$, so $E$ denotes a single function
of the parameter, written $E(\lambda)$ when the parameter is $\lambda$ and
$E(\beta)$ when it is $\beta$.  An element of $\mathcal N_2$ corresponds to
a parameter with $E(\beta)=1$.

The solution $\varphi_0$ transforms in the same way.  Let
\[
 V_y:=\frac{y+G^2}{(y-G^2)^2},\qquad
 \Theta''=-\varepsilon V_y\Theta,\qquad
 \Theta(0)=0,\quad \Theta'(0)=1.
\]
Then $\varphi_0(x)=\Theta(Yx)/Y$, so
$\varphi_0(1)/\varphi_0'(1)=\Theta(Y)/(Y\Theta'(Y))$, and $\mathcal A$ is
invertible if and only if $\Theta'(Y)\neq0$.  Since $\lambda\varepsilon Y=1$,
\eqref{eq:p2-endpoint} becomes
\begin{equation}\label{eq:p2-gap}
 \frac{\dd E}{\dd\lambda}(\lambda)
 =\frac{2A^2}{\lambda^2(1-A^2)^2}\,\mathfrak g(\beta),
 \qquad
 \mathfrak g(\beta):=\frac{1-A^2}2
  -\varepsilon\frac{\Theta(Y)}{\Theta'(Y)}.
\end{equation}

For computation we remove the singularity at $y=0$.  With $y=e^\tau$,
\[
 g=\frac Gy,\quad g_1=\frac{\dd g}{\dd\tau},\quad
 \theta=\frac\Theta y,\quad \theta_1=\frac{\dd\theta}{\dd\tau},\quad
 Z=yg^2,\quad \mathcal E(\tau)=\int_0^{e^\tau}G'^2\,\dd y,
\]
we have $Z(y)=G(y)^2/y=Z_f(y/Y)$, and the problems for $G$, $\Theta$, and the
energy become
\begin{equation}\label{eq:p2-tau-system}
\begin{aligned}
 \dot g&=g_1,&\dot g_1&=-g_1-\frac{\varepsilon yg}{1-Z},\\
 \dot\theta&=\theta_1,&
 \dot\theta_1&=-\theta_1-\frac{\varepsilon y(1+Z)}{(1-Z)^2}\,\theta,\\
 \dot{\mathcal E}&=y(g+g_1)^2,&&
\end{aligned}
\end{equation}
where the dot denotes $\dd/\dd\tau$.  Moreover $G'=g+g_1$, $\Theta'=\theta+\theta_1$,
and $(g,g_1,\theta,\theta_1,\mathcal E)$ tends to $(1,0,1,0,0)$ as
$\tau\to-\infty$.  Along the trajectory with parameter $\beta$,
$E(\beta)=\mathcal E(\log Y(\beta))$.  As long as
$g>0$ and $Z<1$,
\[
 \frac{\dd}{\dd\tau}(g+g_1)=-\frac{\varepsilon yg}{1-Z}<0,
\]
so $G'$ is strictly decreasing and its first zero is simple.

\subsection{Localization of the parameter}

\begin{lemma}\label{lem:p2-small}
Let $\varepsilon=\beta^{-2}$ and let $G$ solve \eqref{eq:p2-ivp}.  If $G'$
has a first zero $Y$ and $0<G^2<y$ on $(0,Y]$, then
\[
 E(\beta)<Y\leq\frac{j_{0,1}^2\beta^2}4,
\]
where $j_{0,1}\approx2.404826$ is the first zero of $J_0$.  In particular,
$E(\beta)<1$ for $\beta\leq2/j_{0,1}$ $(\approx0.831661)$.
\end{lemma}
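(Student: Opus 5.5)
Two bounds are needed: $E(\beta)<Y$ and $Y\leq j_{0,1}^2\beta^2/4$.

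For the first, the plan is to show $G'<1$ on $(0,Y]$. Since $G''=-\varepsilon G/(y-G^2)<0$ on $(0,Y]$ (because $0<G^2<y$), $G'$ is strictly decreasing from $G'(0)=1$ to $G'(Y)=0$. Hence $0\leq G'<1$ on $(0,Y]$, so $G'^2<1$ there, and
\[
 E(\beta)=\int_0^YG'^2\,\dd y<Y .
\]

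For the second, I will use Sturm comparison against a Bessel-type equation. Since $0<G^2<y$, we have $y-G^2<y$, so
\[
 G''=-\frac{\varepsilon G}{y-G^2}\leq-\frac{\varepsilon}{y}\,G
\]
on $(0,Y]$. The comparison equation $\phi''=-(\varepsilon/y)\phi$ with $\phi(0)=0$, $\phi'(0)=1$ has the explicit solution
\[
 \phi(y)=\sqrt{y/\varepsilon}\,J_1\bigl(2\sqrt{\varepsilon y}\bigr),
 \qquad
 \phi'(y)=J_0\bigl(2\sqrt{\varepsilon y}\bigr),
\]
using $(sJ_1(s))'=sJ_0(s)$ and $J_1(s)\sim s/2$ at $0$. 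Thus $\phi'$ first vanishes at $y_0$ with $2\sqrt{\varepsilon y_0}=j_{0,1}$, that is, $y_0=j_{0,1}^2/(4\varepsilon)=j_{0,1}^2\beta^2/4$. On $(0,y_0]$ we have $\phi>0$, since $j_{0,1}<j_{1,1}$.

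The core claim is that $G'$ vanishes no later than $\phi'$, which gives $Y\leq y_0$. I will use the Wronskian $W=G'\phi-G\phi'$. It satisfies $W(0)=0$ and
\[
 W'=G''\phi-G\phi''=\phi G\Bigl(\frac{\varepsilon}{y}-\frac{\varepsilon}{y-G^2}\Bigr)<0
\]
wherever $G,\phi>0$. Suppose $Y>y_0$. Then $G>0$ and $\phi>0$ on $(0,y_0]$, so $W(y_0)<0$. But $\phi'(y_0)=0$ gives $W(y_0)=G'(y_0)\phi(y_0)$, which is positive because $y_0<Y$ forces $G'(y_0)>0$. This contradiction gives $Y\leq y_0$.

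The only delicate point is the singular endpoint $y=0$. There $G=y+O(y^2)$ and $\phi=y+O(y^2)$, with $G''$ and $\phi''$ bounded thanks to the Volterra form \eqref{eq:p2-volterra}. So $W$ is absolutely continuous up to $0$ with $W(0)=0$, and the integration from $0$ is justified.

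Finally, if $\beta\leq2/j_{0,1}$, then $E(\beta)<Y\leq j_{0,1}^2\beta^2/4\leq1$, so $E(\beta)<1$.
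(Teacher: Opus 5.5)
Your proposal is correct and follows essentially the same route as the paper: the same Bessel comparison function $\sqrt{y/\varepsilon}\,J_1(2\sqrt{\varepsilon y})$, the same Wronskian $G'\phi-G\phi'$ vanishing at $0$, and the same contradiction at the first zero of $\phi'$. The only difference is cosmetic: for $E(\beta)<Y$ you use $G'^2<1$ directly, while the paper uses $G'^2<G'$ to get $E<G(Y)\leq Y$.
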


\begin{proof}
Since $G''<0$, we have $0<G'<1$ on $(0,Y)$, and therefore
\[
 E=\int_0^YG'^2\,\dd y<\int_0^YG'\,\dd y=G(Y)\leq Y.
\]
The comparison function
$v(y)=\sqrt{y/\varepsilon}\,J_1(2\sqrt{\varepsilon y})$ satisfies
$v''=-(\varepsilon/y)v$, $v(0)=0$, and $v'(y)=J_0(2\sqrt{\varepsilon y})$.
The first zero of $v'$ is $y_v=j_{0,1}^2/(4\varepsilon)$, and $v>0$ on
$(0,y_v]$ because $j_{0,1}<j_{1,1}$.  Since
$\varepsilon/(y-G^2)\geq\varepsilon/y$,
\[
 (G'v-Gv')'=-\Bigl(\frac\varepsilon{y-G^2}-\frac\varepsilon y\Bigr)Gv\leq0
\]
on $(0,\min\{Y,y_v\}]$, while $G'v-Gv'\to0$ as $y\to0$.  If $Y>y_v$,
then $G'(y_v)v(y_v)\leq G(y_v)v'(y_v)=0$, which contradicts $G'>0$ on
$[0,Y)$.  Hence $Y\leq y_v$.
\end{proof}

For $0\leq t\leq1$ let $\mathrm h(t)=-t\log t-(1-t)\log(1-t)$.

\begin{lemma}\label{lem:p2-near-contact}
Let $f\geq0$, $\lVert f\rVert_2\leq1$, and suppose that
$Z_f(x_0)=1-\delta_0$ for some $x_0\in(0,1]$ and $\delta_0\in(0,1)$.
Put $m=F(x_0)/x_0$,
\[
 v=\int_0^{x_0}(f-m)^2,\qquad b=\int_{x_0}^1f^2,
\]
and define $\alpha,\alpha'\in[0,\pi/2)$ by
$\tan\alpha=\sqrt{v/(1-\delta_0)}$ and
$\tan\alpha'=\sqrt{b/(1-\delta_0)}$.  Then $v+b\leq\delta_0$ and
\begin{equation}\label{eq:p2-near-contact}
 \mathcal J_2(f)\leq x_0L(\alpha)+R(x_0,\alpha'),
\end{equation}
where
\[
 L(\alpha)=1+\frac\pi2\sin2\alpha+\mathrm h(\sin^2\alpha),
\]
and, with $t_1=\sqrt{(1-x_0)/x_0}$ and $t_{\alpha'}=\tan\alpha'$,
\begin{align*}
 R(x_0,\alpha')={}&-\log x_0-2(1-x_0)\log\cos\alpha'+2x_0t_1t_{\alpha'}\\
 &-2x_0(t_1^2-t_{\alpha'}^2)\log|t_1-t_{\alpha'}|-2x_0t_{\alpha'}^2\log t_{\alpha'} .
\end{align*}
For $\alpha=\alpha'=0$ the right-hand side of \eqref{eq:p2-near-contact}
equals $x_0+\mathrm h(x_0)$, which is the value \eqref{eq:contact-value}
of $\mathcal J_2$ at $x_0^{-1/2}\one_{(0,x_0)}$, since $\psi(2)+\gamma=1$.
\end{lemma}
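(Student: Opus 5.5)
The plan is to split $\mathcal J_2(f)=\int_0^{x_0}+\int_{x_0}^1$ and, on each piece, replace $Z_f$ by an explicit pointwise upper bound obtained from Cauchy--Schwarz. Since $z\mapsto-\log(1-z)$ is increasing, the resulting integrals bound $\mathcal J_2(f)$ from above; they should evaluate to $x_0L(\alpha)$ and $R(x_0,\alpha')$.

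First, the bound $v+b\le\delta_0$. Write $f=m+\phi$ on $(0,x_0)$ with $\int_0^{x_0}\phi=0$. Then $Z_f(x_0)=F(x_0)^2/x_0=x_0m^2=1-\delta_0$, and
\[
\int_0^{x_0}f^2=x_0m^2+v .
\]
Hence $1\ge\|f\|_2^2=1-\delta_0+v+b$, which gives $v+b\le\delta_0$.

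On $(0,x_0)$, put $\Phi(x)=\int_0^x\phi$. Then $\Phi(0)=\Phi(x_0)=0$, and Cauchy--Schwarz applied to $\phi$ against $\one_{(0,x)}-x/x_0$ gives
\[
|\Phi(x)|\le\sqrt v\,\sqrt{x(x_0-x)/x_0}.
\]
With $x=x_0y$ and $m\sqrt{x_0}=\sqrt{1-\delta_0}$, this yields
\[
Z_f(x_0y)\le\bigl(\sqrt{1-\delta_0}\,\sqrt y+\sqrt v\,\sqrt{1-y}\bigr)^2 .
\]
Putting $\sqrt y=\sin\theta$ and using $(1-\delta_0)+v\le1$, the right side is at most $\sin^2(\theta+\alpha)$, where $\tan\alpha=\sqrt{v/(1-\delta_0)}$. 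This bound is $<1$ except at one point, so the logarithm stays integrable. Therefore
\[
\int_0^{x_0}-\log(1-Z_f)\le x_0\int_0^{\pi/2}-2\log|\cos(\theta+\alpha)|\,\sin2\theta\,\dd\theta .
\]
I would evaluate the last integral in closed form by integrating by parts against $\sin^2\theta$ and using elementary integrals of $\log|\cos|$. The expected value is $L(\alpha)=1+\tfrac\pi2\sin2\alpha+\mathrm h(\sin^2\alpha)$; at $\alpha=0$ it reduces to $1$.

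On $(x_0,1)$, Cauchy--Schwarz gives
\[
F(x)\le\sqrt{x_0(1-\delta_0)}+\sqrt{b(x-x_0)},
\]
so, with $c=1-\delta_0$ and $t_{\alpha'}=\sqrt{b/c}$,
\[
Z_f(x)\le\frac{c\,\bigl(\sqrt{x_0}+t_{\alpha'}\sqrt{x-x_0}\bigr)^2}{x}\le\frac{\bigl(\sqrt{x_0}+t_{\alpha'}\sqrt{x-x_0}\bigr)^2}{x(1+t_{\alpha'}^2)} .
\]
The second inequality uses $c\le1/(1+t_{\alpha'}^2)$, which is equivalent to $c+b\le1$, i.e.\ to $\|f\|_2\le1$. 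The resulting bound is $\le1$ again by Cauchy--Schwarz.

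Next I substitute $x=x_0(1+s^2)$, so that $s$ ranges over $(0,t_1)$. Then
\[
1-\text{bound}=\cos^2\alpha'\,\frac{(s-t_{\alpha'})^2}{1+s^2}.
\]
The integral therefore splits into four elementary pieces:
\[
\int-\log\cos^2\alpha',\qquad \int\log(1+s^2)\,2x_0s\,\dd s,\qquad \int-2\log|s-t_{\alpha'}|\,2x_0s\,\dd s .
\]
Using $x_0(1+t_1^2)=1$, these produce the terms $-2(1-x_0)\log\cos\alpha'$ and $-\log x_0$, and the explicit logarithmic terms in $t_1,t_{\alpha'}$ that make up $R(x_0,\alpha')$. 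At $\alpha'=0$ the total is $-\log x_0-(1-x_0)\log(1-x_0)+\dots$, matching $\mathrm h(x_0)-x_0\log$-terms, as the final check requires.

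The main obstacle is bookkeeping in these two closed-form evaluations, in particular the $\log|s-t_{\alpha'}|$ integral across the point $s=t_{\alpha'}$ where the bound touches $1$. I would verify the formulas at $\alpha=\alpha'=0$ against \eqref{eq:contact-value} with $\psi(2)+\gamma=1$.
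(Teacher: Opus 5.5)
Your proposal is correct and is essentially the paper's proof. It uses the same Cauchy--Schwarz bounds on $(0,x_0)$ and $(x_0,1)$ and the same angle substitution (your $\theta$ is the paper's $\pi/2-\phi$). On $(x_0,1)$ it uses the same substitution $x=x_0(1+s^2)$, with $1-Z_f\geq\cos^2\alpha'\,(s-t_{\alpha'})^2/(1+s^2)$ split into the same elementary logarithmic integrals.

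When you carry out the first closed-form evaluation, integrate by parts against $\sin^2\theta-\cos^2\alpha$, which vanishes at the singular point $\theta=\pi/2-\alpha$, rather than against $\sin^2\theta$. Alternatively, use antiderivatives that are continuous across the singularity, as the paper does. Your final check at $\alpha'=0$ is garbled; there one gets directly $R=-\log x_0-(1-x_0)\log\frac{1-x_0}{x_0}=\mathrm h(x_0)$.
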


\begin{proof}
Since $m^2x_0=Z_f(x_0)=1-\delta_0$,
\[
 1\geq\int_0^1f^2=v+m^2x_0+b=v+1-\delta_0+b.
\]

For $x\leq x_0$ write $F(x)=mx+G_0(x)$.  Since $\int_0^{x_0}(f-m)=0$,
\[
 G_0(x)=\int_0^{x_0}(f-m)\Bigl(\one_{(0,x)}-\frac x{x_0}\Bigr),
 \qquad
 |G_0(x)|^2\leq v\,\frac{x(x_0-x)}{x_0}.
\]
With $\sqrt{x/x_0}=\cos\phi$, $\phi\in[0,\pi/2]$, this gives
\[
 \sqrt{Z_f(x)}\leq\sqrt{1-\delta_0}\cos\phi+\sqrt v\sin\phi
 =r\cos(\phi-\alpha),\qquad r^2=1-\delta_0+v\leq1.
\]
Therefore $1-Z_f(x)\geq\sin^2(\phi-\alpha)$, and the substitution
$x=x_0\cos^2\phi$ yields
\[
 \int_0^{x_0}-\log(1-Z_f)\,\dd x
 \leq x_0\int_0^{\pi/2}-\log\bigl(\sin^2(\phi-\alpha)\bigr)
 \sin2\phi\,\dd\phi.
\]
With $\vartheta=\phi-\alpha$, write
$\sin2\phi=\cos2\alpha\sin2\vartheta+\sin2\alpha\cos2\vartheta$ and
use the antiderivatives
\begin{align*}
 \int-\log(\sin^2\vartheta)\sin2\vartheta\,\dd\vartheta
 &=-\sin^2\vartheta\log\sin^2\vartheta+\sin^2\vartheta,\\
 \int-\log(\sin^2\vartheta)\cos2\vartheta\,\dd\vartheta
 &=-\tfrac12\sin2\vartheta\log\sin^2\vartheta+\vartheta
   +\tfrac12\sin2\vartheta,
\end{align*}
which are continuous across $\vartheta=0$.  Evaluating between
$\vartheta=-\alpha$ and $\vartheta=\pi/2-\alpha$ gives $L(\alpha)$.

For $x\geq x_0$, $F(x)\leq F(x_0)+\sqrt{b(x-x_0)}$.  With
$\sqrt{x_0/x}=\cos\phi_1$, the same argument gives
$1-Z_f(x)\geq\sin^2(\phi_1-\alpha')$.  The substitution
$x=x_0(1+t^2)$, $t=\tan\phi_1\in[0,t_1]$, turns
$-\log\sin^2(\phi_1-\alpha')$ into
$\log(1+t^2)-2\log\cos\alpha'-2\log|t-t_{\alpha'}|$ and $\dd x$ into
$2x_0t\,\dd t$.  The resulting elementary integrals give
$R(x_0,\alpha')$.
\end{proof}

\begin{corollary}\label{cor:p2-omega}
Let
\[
 \Upsilon(\eta):=\sup_{(x_0,\alpha,\alpha')\in S_\eta}
 \bigl(x_0L(\alpha)+R(x_0,\alpha')\bigr)-\log(1+e),
\]
where $S_\eta$ is the set of $0<x_0\leq1$ and
$\alpha,\alpha'\in[0,\pi/2)$ with
$\tan^2\alpha+\tan^2\alpha'\leq\eta/(1-\eta)$.
Then $\Upsilon(0.002)\leq0.14715043$.  Consequently, every $f\geq0$ with
$\lVert f\rVert_2\leq1$ and $\sup_{0<x\leq1}Z_f(x)\geq0.998$ satisfies
\begin{equation}\label{eq:p2-near-contact-bound}
 \mathcal J_2(f)\leq1.46041212.
\end{equation}
\end{corollary}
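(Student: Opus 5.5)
The corollary has two parts: a certified numerical bound $\Upsilon(0.002)\leq0.14715043$, and the reduction of the bound \eqref{eq:p2-near-contact-bound} to it via Lemma~\ref{lem:p2-near-contact}. I will do the reduction first, since it is elementary, and then describe the computation.

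\textbf{Reduction.} Let $f\geq0$ with $\|f\|_2\leq1$ and $\sup_{0<x\leq1} Z_f\geq0.998$. By the proof of Proposition~\ref{thm:strict-contact}, $Z_f$ is continuous on $(0,1]$ and tends to $0$ as $x\downarrow0$, so the supremum is attained at some $x_0\in(0,1]$. If $Z_f(x_0)=1$, Proposition~\ref{prop:contact-profiles} identifies $f$ explicitly. Its value $x_0+\mathrm h(x_0)$ is at most $\log(1+e)$, which is far below the target; this also covers $\|f\|_2<1$, since the contact case forces $\|f\|_2=1$.

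Otherwise set $\delta_0:=1-Z_f(x_0)\in(0,0.002]$ and apply Lemma~\ref{lem:p2-near-contact} at this $x_0$. Using $v+b\leq\delta_0$ and the definitions of $\alpha,\alpha'$,
\[
 \tan^2\alpha+\tan^2\alpha'=\frac{v+b}{1-\delta_0}\leq\frac{\delta_0}{1-\delta_0}\leq\frac{0.002}{0.998}.
\]
Hence $(x_0,\alpha,\alpha')\in S_{0.002}$, and
\[
 \mathcal J_2(f)\leq\log(1+e)+\Upsilon(0.002)\leq\log(1+e)+0.14715043.
\]
A certified enclosure gives $\log(1+e)=1.31326168\ldots$, so the right-hand side is at most $1.46041212$, as claimed. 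All that remains is the numerical bound on $\Upsilon$.

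\textbf{Bounding $\Upsilon(0.002)$.} Write $u=\tan^2\alpha$ and $u'=\tan^2\alpha'$. The function $L$ is increasing in $\alpha$ on the small range allowed here. For each fixed $x_0$, the term $R$ is increasing in $t_{\alpha'}$ while $t_{\alpha'}<t_1$. This lets me restrict the supremum to the boundary $u+u'=\eta/(1-\eta)$, leaving two parameters: $x_0$ and the split between $u$ and $u'$.

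I will subdivide the $x_0$-range into boxes and the split parameter into boxes, and evaluate $x_0L(\alpha)+R(x_0,\alpha')$ with ball arithmetic in Arb on each box. Each box then yields a rigorous upper bound, and the maximum over boxes bounds $\Upsilon$. Interval arithmetic is not reliable where terms such as $t_{\alpha'}^2\log t_{\alpha'}$ or $(t_1^2-t_{\alpha'}^2)\log|t_1-t_{\alpha'}|$ degenerate, so there I will replace the formula by a hand-derived monotone bound:
\begin{itemize}
\item near $t_{\alpha'}=0$, use continuity and $s\log s\to0$;
\item near $t_1=t_{\alpha'}$, which happens when $x_0$ is close to $1$, bound $s^2\log s$ by $0$.
\end{itemize}

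\textbf{Main obstacle.} I expect the hardest part to be the region where $x_0$ approaches $1$ and $t_1$ becomes comparable to $t_{\alpha'}\approx\sqrt{0.002}$. There the logarithmic singularities nearly collide and naive enclosures blow up. I would handle this by giving $x_0\in[1-c\eta,1]$ a separate elementary estimate, using $R\leq-\log x_0+O(\sqrt\eta)$ and $L(\alpha)\leq1+O(\sqrt\eta)$. On this strip the total is then about $1+O(\sqrt\eta)$, well below the bound.
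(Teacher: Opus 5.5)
Your reduction is correct and is essentially the paper's. The one difference is the choice of $x_0$. You take a maximum point of $Z_f$ and treat contact separately by Proposition~\ref{prop:contact-profiles}. The paper instead picks $x_0$ with $Z_f(x_0)=1-\eta$ by the intermediate value theorem, so $\delta_0=\eta$ and contact never arises; both work.

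Restricting the supremum to $\tan^2\alpha+\tan^2\alpha'=\eta/(1-\eta)$ is also legitimate, but only because $L$ is increasing for $\alpha<\pi/4$: for fixed $\alpha'$, push $\alpha$ to the boundary. The reason you give, that $R$ increases in $t_{\alpha'}$ on $(0,t_1)$, is false. Using $1-x_0=x_0t_1^2$ and $-2\log\cos\alpha'=\log(1+t_{\alpha'}^2)$,
\[
 \partial_{t_{\alpha'}}R=x_0\Bigl(\frac{2t_1^2t_{\alpha'}}{1+t_{\alpha'}^2}+4t_1+4t_{\alpha'}\log\frac{t_1-t_{\alpha'}}{t_{\alpha'}}\Bigr)\longrightarrow-\infty\qquad(t_{\alpha'}\uparrow t_1).
\]
Since you enclose $R$ on boxes anyway, simply drop this claim. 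Near $t_{\alpha'}=0$, what gives an enclosure is that $t^2\log(1/t)$ is increasing on $[0,e^{-1/2}]$, which contains $T_m:=\sqrt{\eta/(1-\eta)}$; continuity alone does not.

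The certification plan has two genuine defects.

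\emph{The collision term.} At $t_1=t_{\alpha'}$ the singular term is not of type $s^2\log s$. With $s=t_1-t_{\alpha'}$,
\[
 (t_1^2-t_{\alpha'}^2)\log|t_1-t_{\alpha'}|=(t_1+t_{\alpha'})\,s\log|s|,
\]
which vanishes only to first order. It enters $R$ as $-2x_0(t_1+t_{\alpha'})s\log|s|$, which is positive for $0<s<1$, so replacing it by $0$ is not an upper bound. The paper uses exactly this factorization in two places:
\begin{itemize}
\item In the box computation it encloses $s\log|s|$ through its monotonicity on $(-\infty,-1/e]$, $[-1/e,1/e]$ and $[1/e,\infty)$. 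The collision still occurs there, for $1-\eta\le x_0<x_l\approx0.999505$.
\item In the analytic bound on $[x_l,1]$ it uses $|s\log|s||\le1/e$, so the term contributes $\frac2e(t_l+T_m)$.
\end{itemize}
Your $O(\sqrt\eta)$ strip estimate must contain this same contribution.

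\emph{The end $x_0\downarrow0$.} You never treat it. There $t_1\to\infty$, and $-\log x_0\to+\infty$ cancels against $-2x_0t_1^2\log t_1\to-\infty$. So no finite family of boxes reaches $x_0=0$ with finite enclosures of $R$ as written. A separate analytic bound is needed, as the paper gives for $0<x_0\le x_m\approx0.01$:
\begin{itemize}
\item subtract $\mathrm h(x_0)$ using $-\log x_0-\mathrm h(x_0)=2x_0t_1^2\log t_1$;
\item write $\log|t_1-t_{\alpha'}|=\log t_1+\log(1-\varpi)$ with $\varpi=t_{\alpha'}/t_1$ small;
\item bound the remaining terms by monotone quantities.
\end{itemize}
The expression then tends to $-2\log\cos\alpha'\le\log(1+T_m^2)$ as $x_0\to0$, and the paper gets $\le-1.2255$ after subtracting $\log(1+e)$.

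With these two repairs your plan is the paper's argument. The only remaining difference is that you parametrize the boundary split, where the paper uses $32$ angular sectors in $(\tan\alpha,\tan\alpha')$.
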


\begin{proof}
If $\sup Z_f\geq1-\eta$, choose $x_0$ with $Z_f(x_0)=1-\eta$; this is
possible by the intermediate value theorem, because $Z_f$ is continuous
on $(0,1]$ and tends to zero at zero.  Apply
Lemma~\ref{lem:p2-near-contact} with $\delta_0=\eta$.  Then
$\tan^2\alpha+\tan^2\alpha'=(v+b)/(1-\eta)\leq\eta/(1-\eta)$.  The
supremum is bounded in ball arithmetic \cite{Johansson2017} with
$128$-bit precision.  For $x_0\in[x_m,x_l]$, with the exact binary
endpoints $x_m\approx0.01$ and $x_l\approx0.999505$ defined in
Appendix~\ref{sec:appendix-cap}, we use $1999$ subintervals and
$32$ sectors in the variables $(\tan\alpha,\tan\alpha')$.  On
$S_\eta$ we have $\alpha\leq\arctan\sqrt{\eta/(1-\eta)}\approx0.0447$, and
$L$ is increasing for $\alpha<\pi/4$; on each subinterval this is combined
with an interval enclosure of $R$.  For $0<x_0\leq x_m$ and
$x_l\leq x_0\leq1$ we use explicit bounds.  These bounds, and the
treatment of the logarithmic singularity of $R$ at $t_1=t_{\alpha'}$, are
given in Appendix~\ref{sec:appendix-cap}.  The result is
$\Upsilon(0.002)\leq0.1471504229196$, and $\log(1+e)+0.14715043<1.46041212$.
\end{proof}

\begin{lemma}\label{lem:p2-klow}
$\mathsf J_2\geq1.481973$.
\end{lemma}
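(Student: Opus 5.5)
Since $\mathsf J_2$ is a supremum, a lower bound is obtained by evaluating $\mathcal J_2$ at one explicit admissible function and enclosing the resulting integral rigorously. The natural candidate is a good approximation of the maximizer. The plan is to take the solution of the initial value problem \eqref{eq:p2-ivp} for a parameter $\beta_0$ near the expected maximizing value, with $\varepsilon=\beta_0^{-2}$. We then form $f(x)=\sqrt Y\,G'(Yx)$, where $Y$ is the first zero of $G'$, and normalize by $\tilde f=f/\sqrt{E(\beta_0)}$. Then $\|\tilde f\|_2=1$, and
\[
 \mathsf J_2\geq\mathcal J_2(\tilde f)
 =\int_0^1-\log\Bigl(1-\frac{Z_f(x)}{E(\beta_0)}\Bigr)\dd x .
\]
The identity $Z_f(y/Y)=G(y)^2/y$ from \eqref{eq:p2-shooting-dictionary} turns this into
\[
 \frac1Y\int_0^Y-\log\Bigl(1-\frac{Z(y)}{E(\beta_0)}\Bigr)\dd y .
\]
Here $Z=yg^2$ is a component of the desingularized system \eqref{eq:p2-tau-system}.

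For a rigorous enclosure, one option is to augment \eqref{eq:p2-tau-system} with the quadrature variable
\[
 \dot{\mathcal I}=-y\log\bigl(1-Z/\mathcal E_*\bigr),
\]
but this requires knowing $\mathcal E_*=E(\beta_0)$ in advance. A cleaner choice avoids the issue: if $E(\beta_0)\leq1$, then $\tilde f=f/\sqrt E$ makes the integrand larger, so it suffices to bound $\mathcal J_2(f)$ itself from below. Better still, we pick $\beta_0$ so that $E(\beta_0)<1$ is certified. Then $f$ is admissible as it stands, and Proposition~\ref{prop:p2-pohozaev} gives directly $\mathcal J_2(f)=-\log(1-A(\beta_0)^2)$, since $f$ solves \eqref{eq:p2-dn} with no normalization imposed. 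The statement then reduces to a certified enclosure of $A(\beta_0)^2=G(Y)^2/Y$, together with checks that $E(\beta_0)\leq1$ and that $G>0$ and $Z<1$ up to $Y$. We need
\[
 A^2\geq1-e^{-1.481973}\approx0.7728 .
\]

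The key steps, in order, are as follows. First, choose $\beta_0$ slightly below the approximate maximizer, found by floating-point shooting, so that $E(\beta_0)$ sits just below $1$ while $A^2$ remains large enough. Second, start the validated integration at some $\tau_0\ll0$ using a rigorous Taylor/Volterra enclosure from \eqref{eq:p2-volterra}: for small $y$, $g=1-\varepsilon y/2+O(y^2)$ with explicit remainder bounds. Third, integrate \eqref{eq:p2-tau-system} in Arb with outward rounding up to an interval in which $g+g_1$ changes sign. The sign change encloses $\log Y$, and since $G'$ is strictly decreasing it is a simple zero. Fourth, read off enclosures of $E(\beta_0)=\mathcal E(\log Y)$ and of $Z(\log Y)=A^2$, and verify $\sup Z<1$ along the way.

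I expect the main obstacle to be the trade-off in choosing $\beta_0$. The margin between $1.481973$ and the true value $\mathsf J_2\approx\log4.40273\approx1.48222$ is only about $2.5\cdot10^{-4}$. Enforcing $E(\beta_0)\leq1$ costs accuracy only to second order near the maximizer, but the interval width accumulated at the endpoint $Y$ must still stay well below $10^{-4}$. Failing that, I would instead use the exact normalization $f/\sqrt{E}$ with the quadrature variable. The first attempt would be high precision with small step sizes, plus a tight enclosure of $Y$ obtained by Newton refinement on $g+g_1$.
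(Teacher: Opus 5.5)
Your proposal is correct, but it takes a different route from the paper.

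\textbf{The paper's route.} The paper does not use the shooting problem here. It takes a continuous piecewise linear $f_{\rm test}$ with nodes $i/1000$, whose nodal values are fixed binary approximations of a numerically computed maximizer. It computes in ball arithmetic an upper bound $N\geq\|f_{\rm test}\|_2$; the norm of a piecewise linear function is an explicit finite sum. It then bounds $\mathcal J_2(f_{\rm test}/N)$ from below by a lower Riemann sum on $40\,000$ subintervals. On each subinterval the primitive is an explicit quadratic and $-\log(1-Z)$ is enclosed directly. This needs only interval evaluation of an explicit function: no validated ODE integration, no event location, no Pohozaev identity. So it is independent of the integrator behind Proposition~\ref{prop:p2-cap}, with which it is combined in the proof of Theorem~\ref{thm:B}. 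The price is a gap of about $2.5\cdot10^{-4}$ below $\mathsf J_2$, which the discretization leaves.

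\textbf{Your route.} You evaluate $\mathcal J_2$ on an exact solution instead. Once the validated integration certifies $G>0$ and $Z<1$ up to the first zero $Y$ of $G'$, and $E(\beta_0)\leq1$, the converse part of Lemma~\ref{lem:p2-shooting} applies. Together with $0\leq G'\leq1$, it places $f$ under the hypotheses of Proposition~\ref{prop:p2-pohozaev}, which indeed needs no normalization. Everything then reduces to one endpoint enclosure of $A^2$, and the logarithmic quadrature disappears. This reuses machinery the paper needs anyway and gives a much sharper bound. In fact, the data of Proposition~\ref{prop:disk-constant} already yield $\mathsf J_2\geq1.4822252$ this way, without invoking uniqueness: $E(b_1)<1$, and $A^2\geq0.77286830214$ on $[b_1,b_2]$.

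\textbf{A correction to your error analysis.} Enforcing $E(\beta_0)\leq1$ costs first order, not second order. Only the normalized value $\mathcal J_2(f_\beta/\sqrt{E(\beta)})$ is stationary at $\beta_*$. From $D\mathcal J_2(f_\beta)=2\lambda\langle f_\beta,\cdot\rangle$ one gets $\frac{\dd}{\dd\beta}\mathcal J_2(f_\beta)=\lambda\frac{\dd E}{\dd\beta}$, which is the identity $\lambda\,\dd E/\dd\lambda=2AA'/(1-A^2)$ of Proposition~\ref{prop:p2-spectral-reduction}. So the loss is about $\lambda(1-E(\beta_0))$ with $\lambda\approx5.9$. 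You therefore need $1-E(\beta_0)\lesssim4\cdot10^{-5}$, i.e.\ $\beta_0$ within roughly $10^{-3}$ of $\beta_*$. This is harmless, because enclosures at a single fixed $\beta_0$ are far tighter than that.
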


\begin{proof}
Let $f_{\rm test}$ be the continuous piecewise linear function with nodes
$i/1000$, $0\leq i\leq1000$, whose nodal values are fixed binary
approximations of a numerical maximizer.  In ball arithmetic we compute
an upper bound $N\geq\lVert f_{\rm test}\rVert_2$ and a lower Riemann sum for
$\mathcal J_2(f_{\rm test}/N)$ on $40\,000$ subintervals, using interval enclosures of the
integrand on each subinterval; see Appendix~\ref{sec:appendix-cap} for
details.  The result is $\mathcal J_2(f_{\rm test}/N)\geq1.4819738073$.  Since $\lVert f_{\rm test}/N\rVert_2\leq1$, the
claim follows.
\end{proof}

\begin{lemma}\label{lem:p2-large}
Let $0<\delta<1$ and suppose that $\varepsilon(1+\log\frac1\delta)<1$ and
$(1-\varepsilon)^2\geq1-\delta$.  Then the solution of \eqref{eq:p2-ivp}
has a point $y_1\in(0,1]$ with $Z(y_1)\geq1-\delta$ and $G'>0$ on
$[0,y_1]$.  In particular, for $\delta=0.002$ this holds whenever
$\beta\geq31.62$.
\end{lemma}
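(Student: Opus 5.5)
We would work directly with the Volterra form \eqref{eq:p2-volterra} of the initial value problem on the short interval $[0,1]$, where $y\leq1$ makes the singular weight $1/(1-Z)$ the only danger. Write $g=G/y$, so that $Z=yg^2$.

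The plan has three steps.

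First, a priori control while $Z<1-\delta$. As long as $Z\leq1-\delta$ on $(0,y]$ and $g>0$, the integrand in \eqref{eq:p2-volterra} satisfies $g/(1-sg^2)\leq g/(1-Z)$. This gives the two-sided bound
\[
 1\geq g(y)\geq 1-\varepsilon\int_0^y\Bigl(1-\frac sy\Bigr)\frac{g}{1-Z}\,\dd s .
\]
The upper bound $g\leq1$ is immediate, since the integrand is nonnegative.

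For the lower bound we want something sharper than $1-\varepsilon y/\delta$, which is too crude. The idea is to exploit that $1-Z$ is small only where $Z$ is already near $1$. We use $g\leq1$ to get $1-Z\geq1-s$ for $s\leq1$, so that
\[
 \int_0^y\Bigl(1-\frac sy\Bigr)\frac{\dd s}{1-s}\leq\log\frac1{1-y}.
\]
To match the hypothesis $\varepsilon(1+\log\frac1\delta)<1$, we will instead bound $1/(1-Z)\leq\min\{1/\delta,1/(1-s)\}$. Splitting the integral at $s=1-\delta$ then gives at most $\log(1/\delta)+1$. Hence $g(y)\geq1-\varepsilon(1+\log\frac1\delta)>0$ up to the first time $Z$ reaches $1-\delta$, and in any case up to $y=1$.

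Second, existence of $y_1$. Suppose $Z<1-\delta$ on all of $(0,1]$. Then the bound above holds at $y=1$. Using the slightly refined estimate $g(1)\geq1-\varepsilon$, we get $Z(1)=g(1)^2\geq(1-\varepsilon)^2\geq1-\delta$, a contradiction.

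The refinement comes from the following computation. The factor $(1-s/y)$ at $y=1$ equals $1-s$, which cancels the singular weight: $\int_0^1(1-s)\,\dd s/(1-Z)\leq\int_0^1 \dd s=1$, again using $1-Z\geq1-s$. So $g(1)\geq1-\varepsilon$.

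With this, the hypothesis $\varepsilon(1+\log\frac1\delta)<1$ is what is used for positivity and for $G'>0$, while $(1-\varepsilon)^2\geq1-\delta$ is what is used for reaching the level. By continuity, $Z$ therefore attains $1-\delta$ at some $y_1\leq1$; take the first such point.

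Third, $G'>0$ on $[0,y_1]$. Since $G'=g+g_1$ and $G''=-\varepsilon g/(1-Z)$, integrating gives
\[
 G'(y)=1-\varepsilon\int_0^y\frac{g}{1-Z}\,\dd s\geq1-\varepsilon\int_0^y\min\Bigl\{\frac1\delta,\frac1{1-s}\Bigr\}\dd s\geq1-\varepsilon\Bigl(1+\log\frac1\delta\Bigr)>0 .
\]
This also keeps $G>0$ and $Z<1$ on $[0,y_1)$, which justifies the whole bootstrap.

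The numerical case is a direct check. For $\delta=0.002$ we have $1+\log 500\approx7.2146$, and $\beta\geq31.62$ gives $\varepsilon\leq0.0010003$. Then $\varepsilon(1+\log\frac1\delta)\approx0.0072<1$ and $(1-\varepsilon)^2\geq0.9980\ge0.998$; the latter has a thin margin, so we will verify it with exact rational arithmetic.

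The main obstacle is the bootstrap ordering: the estimate $1-Z\geq1-s$ needs $g\leq1$, which needs $g>0$. We will handle this with a continuity argument on the maximal interval where $g>0$ and $Z<1$.
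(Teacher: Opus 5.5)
Your proposal is correct and follows the paper's proof. Your Volterra form for $g=G/y$ is the same formula as $G(y)=y-\int_0^y(y-s)|G''(s)|\dd s$, and your three estimates coincide with the paper's: $1/(1-Z)\le\min\{1/\delta,1/(1-s)\}$ from $Z\le s$, the bound $G'\ge1-\varepsilon(1+\log\frac1\delta)$, and $G(1)\ge1-\varepsilon$ because the weight $(1-s)$ cancels the singularity. The only cosmetic difference is in the continuity argument: the paper builds $G'>0$ into the definition of the interval, whereas you run it on $g>0$, $Z<1-\delta$ and derive $G'>0$ afterwards.
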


\begin{proof}
Let $y^*$ be the supremum of those $y\leq1$ such that $G'>0$ and
$Z<1-\delta$ on $[0,y]$.  On $[0,y^*)$ we have $G\leq y$, hence $Z\leq y$,
and
\[
 y-G^2=y(1-Z)\geq y\max\{1-y,\delta\},\qquad
 |G''|\leq\frac\varepsilon{\max\{1-y,\delta\}}.
\]
Thus
$G'\geq1-\varepsilon\int_0^1\max\{1-s,\delta\}^{-1}\,\dd s
=1-\varepsilon(1+\log\frac1\delta)>0$ on $[0,y^*]$.  If $y^*<1$, then
$Z(y^*)=1-\delta$ by continuity.  If $y^*=1$, then
\[
 G(1)=1-\int_0^1(1-s)|G''(s)|\,\dd s\geq1-\varepsilon,
 \qquad Z(1)=G(1)^2\geq1-\delta.
\]
For $\delta=0.002$, both conditions hold when
$\varepsilon\leq1-\sqrt{0.998}$, since then
$\varepsilon(1+\log500)<0.01$; that is, when
$\beta\geq(1-\sqrt{0.998})^{-1/2}\approx31.6149$.  The stated threshold
$31.62$ is therefore sufficient.
\end{proof}

\subsection{The computation on a compact interval}

Let $\beta_{\min}$, $\beta_-$, and $\beta_+$ denote the IEEE
double-precision numbers nearest to $0.83$, $3.278$, and $3.289$:
\begin{align*}
 \beta_{\min}&=\texttt{0x1.a8f5c28f5c28fp-1},\\
 \beta_-&=\texttt{0x1.a395810624dd3p+1},\\
 \beta_+&=\texttt{0x1.a4fdf3b645a1dp+1}.
\end{align*}
They are the rational numbers given by these binary expansions, and
\begin{gather*}
 0.83-10^{-16}<\beta_{\min}<0.83<\frac2{j_{0,1}}\approx0.831661,\\
 3.278<\beta_-<3.278+10^{-16},\qquad
 3.289<\beta_+<3.289+2\cdot10^{-16}.
\end{gather*}
Put $W:=[\beta_-,\beta_+]$.

\begin{proposition}\label{prop:p2-cap}
For $\varepsilon=\beta^{-2}$, the solution of \eqref{eq:p2-ivp} has the
following properties.
\begin{enumerate}
 \item If $\beta\in[\beta_{\min},\beta_-]$, then $G'$ has a first zero
 $Y$, $0<G^2<y$ on $(0,Y]$, and $E(\beta)\leq0.99997$.
 \item If $\beta\in W$, then $G'$ has a first zero $Y$, and
 $0<G^2<y$ on $(0,Y]$.  Moreover, $\Theta'(Y)\leq-1.262$ and
 $\mathfrak g(\beta)\geq0.0094$, and the endpoint values satisfy
 $X(\beta_-)\in[0.170178,0.170204]$ and
 $X(\beta_+)\in[0.168709,0.168738]$.
 \item If $\beta\in[\beta_+,5]$, then $G'$ has a first zero $Y$,
 $0<G^2<y$ on $(0,Y]$, and $E(\beta)\geq1.00004$.
 \item If $\beta\in[5,32]$, then there is $y_1>0$ such that $G'>0$ and
 $0<G^2<y$ on $(0,y_1]$ and $Z(y_1)\geq0.998$.
\end{enumerate}
\end{proposition}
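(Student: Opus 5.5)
This is a computer-assisted statement, and the plan is a validated integration of the regularized system \eqref{eq:p2-tau-system}. The system is parametrized by $\beta$, with $\varepsilon=\beta^{-2}$ entering as an interval parameter.

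\textbf{Starting point.} The trajectory is singular at $\tau=-\infty$, so I will not integrate from there. Instead I start at a finite $\tau_0$, say $y_0=e^{\tau_0}=10^{-6}$. There I enclose $(g,g_1,\theta,\theta_1,\mathcal E)$ using the Volterra equation \eqref{eq:p2-volterra} and the analogous equation for $\vartheta$. Their Picard iterates give explicit two-sided bounds of the form $g=1-\varepsilon y+O(\varepsilon^2y^2+\varepsilon y^2)$, with rigorous remainders. These remainders are valid uniformly for $\beta$ in a parameter box, because the kernel is bounded and $Z\le y$ stays small.

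\textbf{Integration and what is checked.} From $\tau_0$ I run an interval Taylor integrator (Arb or CAPD), subdividing each $\beta$-range into many small parameter boxes. Along each box I verify the following, with outward rounding at every step:
\begin{itemize}
\item $g>0$ and $Z<1$;
\item $G'=g+g_1$ eventually becomes negative, so that the first zero $Y$ is isolated inside a $\tau$-interval on which $\frac{d}{d\tau}(g+g_1)<0$ holds rigorously; this gives a simple zero with $Y$ enclosed;
\item $0<G^2<y$ up to $Y$.
\end{itemize}

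\textbf{Reading off the quantities.} At the enclosed $Y$ I read off enclosures of $E=\mathcal E(\log Y)$, $A^2=Z(Y)$, $X=\varepsilon Y$, and $\Theta'(Y)=y(\theta+\theta_1)$. The $\tau$-uncertainty in $Y$ is propagated through the derivative bounds. This gives the four items:
\begin{itemize}
\item \textbf{Items (i) and (iii).} The upper bound $0.99997$ and the lower bound $1.00004$ on $E$ are checked box by box on $[\beta_{\min},\beta_-]$ and $[\beta_+,5]$.
\item \textbf{Item (ii).} On $W$, $\mathfrak g(\beta)=\frac{1-A^2}{2}-\varepsilon\Theta(Y)/\Theta'(Y)$ is computed in interval arithmetic, and $\Theta'(Y)\le-1.262$ keeps the division safe. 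Evaluating at the degenerate boxes $\{\beta_\pm\}$ gives the stated $X$ endpoint enclosures.
\item \textbf{Item (iv).} On $[5,32]$ I do not need $Y$. It suffices to integrate until a $\tau$ where $Z\ge0.998$ is certified while $G'>0$ and $G>0$ still hold.
\end{itemize}

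\textbf{Main obstacle.} I expect the main difficulty to be item (iv) together with the upper end of (iii), where $\beta$ is large. There $Z$ approaches $1$, and the vector field has terms $\varepsilon y/(1-Z)^2$ that make the system stiff, which causes wrapping-effect blowup of the enclosures. I would first try adaptive step sizes with QR/Lohner-type coordinate changes (as CAPD provides), together with finer $\beta$-subdivision as $\beta$ grows. Near the contact regime I would also stop as soon as $Z\ge0.998$ is certified, rather than continuing to $Y$.

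A secondary delicate point is certifying $E$ to accuracy $3\cdot10^{-5}$ near $\beta_\pm$, since $E$ crosses $1$ nearby. This requires tight initial enclosures and narrow $\beta$-boxes close to $W$. The results should be cross-checked with two independent libraries using slightly different endpoints.
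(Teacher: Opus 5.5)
Your plan is essentially the paper's proof. The paper starts from a rigorous enclosure at a small $y_0$ (it takes $y_0=e^{-28}$ with the explicit bounds of Lemma~\ref{lem:app-start}) and integrates \eqref{eq:p2-tau-system} by a validated Taylor--Lohner method over closed $\beta$-subintervals in both Arb and CAPD. It certifies $g>0$ and $1-Z>0$ step by step, brackets the first zero of $g+g_1$, and encloses $\mathcal E$, $Z$, $\theta+\theta_1$, $\mathfrak g$ and $X$ over the bracketing steps, stopping once $Z\geq0.998$ on $[5,32]$.

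Two formulas need correcting before implementation. Since $\Theta=y\theta$ and $\theta_1=y\,\dd\theta/\dd y$, one has $\Theta'=\theta+\theta_1$, not $y(\theta+\theta_1)$, so that $\mathfrak g=\tfrac12(1-Z)-\varepsilon e^{\tau}\theta/(\theta+\theta_1)$ at $\tau=\log Y$. Also, the leading behaviour is $g=1-\tfrac12\varepsilon y+\dots$ rather than $1-\varepsilon y$, because the kernel $1-s/y$ in \eqref{eq:p2-volterra} integrates to $y/2$.
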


\begin{proof}
Starting from the enclosure in Lemma~\ref{lem:app-start}, we integrate
\eqref{eq:p2-tau-system} with outward-rounded interval arithmetic over
closed parameter subintervals.  The trajectory enclosures verify $G>0$
and $G^2<y$.  On the first three ranges we locate the first zero of $G'$
and enclose the energy and the linearized endpoint values; on the fourth
we verify $Z\geq0.998$ while $G'>0$.  Independent implementations in Arb
and CAPD give all the stated bounds.  The initial enclosures, validated
integration and event-location procedure are detailed in
Appendix~\ref{sec:appendix-integration}.
\end{proof}

\subsection{Proof of Theorem~\ref{thm:B}}

Let $f\in\mathcal N_2$ and let $\beta=\sqrt\lambda\,f(0)$ be its parameter
in Lemma~\ref{lem:p2-shooting}.  Then
$E(\beta)=1$.  By Lemma~\ref{lem:p2-small}, $\beta>2/j_{0,1}>0.8316>\beta_{\min}$.
If $\beta\geq31.62$, Lemma~\ref{lem:p2-large} gives
$\sup Z_f\geq0.998$.  Corollary~\ref{cor:p2-omega} and
Lemma~\ref{lem:p2-klow} then give
\[
 \mathcal J_2(f)\leq1.46041212<1.481973\leq\mathsf J_2,
\]
which is impossible.  Hence $\beta\in[\beta_{\min},32]$.  By
Proposition~\ref{prop:p2-cap}, $E(\beta)\neq1$ on
$[\beta_{\min},\beta_-]\cup[\beta_+,5]$, and the same contradiction excludes
$[5,32]$.  Therefore $\beta\in W$.

On $W$, every $\beta$ gives a solution of \eqref{eq:p2-dn}, and
$\Theta'(Y)\neq0$, so $\mathcal A$ is invertible.  By continuous dependence
on parameters for \eqref{eq:p2-volterra} and \eqref{eq:p2-tau-system},
and since $Y$ is a simple zero of $G'$, the maps
$\beta\mapsto\lambda(\beta)$ and $\beta\mapsto F_\beta\in C^1([0,1])$ are
continuous on $W$.  Since
$F_\beta''=-F_\beta/(\lambda(\beta)(x-F_\beta^2))$, the second map is also
continuous into $C^2_{\rm DN}$.  Fix $\beta_0\in W$.  By the local uniqueness in the
implicit-function theorem, $F_\beta$ coincides for $\beta$ near $\beta_0$
with the local curve through $F_{\beta_0}$ described before
Proposition~\ref{prop:p2-endpoint}, evaluated at $\lambda(\beta)$.  If
$\lambda(\beta_1)=\lambda(\beta_2)$ for two such parameters, then
$F_{\beta_1}=F_{\beta_2}$, hence
$\beta_1=\sqrt{\lambda}\,F'_{\beta_1}(0)=\beta_2$.  Thus
$\beta\mapsto\lambda(\beta)$ is continuous and locally injective, hence
strictly monotone on $W$.  By Proposition~\ref{prop:p2-cap}(2),
$X(\beta_-)>X(\beta_+)$, so $\lambda$ is increasing.  Along each local
$C^1$ curve, \eqref{eq:p2-gap} and $\mathfrak g>0$ give
$\frac{\dd E}{\dd\lambda}>0$.  Composing with the strictly increasing map
$\beta\mapsto\lambda(\beta)$ shows that
$\beta\mapsto E(\beta)=E(\lambda(\beta))$ is locally strictly increasing,
and therefore strictly increasing on $W$.
Consequently, at most one $\beta\in W$ satisfies $E(\beta)=1$.  Since
$\beta$ determines $f$ and $\mathcal N_2\neq\emptyset$
(Proposition~\ref{thm:dual-extremizers}), $\mathcal N_2=\{f_*\}$.

At $f_*$ we have $\mathfrak g(\beta_*)>0$, hence $\dd E/\dd\lambda(\lambda_*)>0$ by
\eqref{eq:p2-gap}; equivalently, the Robin condition of
Corollary~\ref{cor:p2-robin} holds at $f_*$.  Then
Proposition~\ref{prop:p2-spectral-reduction} shows
that $\mathfrak q$ is coercive on $\mathcal T_{F_*}$.  In the proof of
Theorem~\ref{thm:p2-finite-power}, put $\Xi=\mathcal V\varphi$.  The
tangent condition $\langle f_*,\varphi\rangle_2=0$ becomes
$\ell(\Xi)=0$, the norms satisfy $\lVert\Xi'\rVert_2=\lVert\varphi\rVert_2$,
and the quadratic form of \eqref{eq:p2-analytic-hessian} equals
$2\mathfrak q[\Xi]$.  Hence the Hessian of the deficit is coercive,
$N_*=\{0\}$, and Theorem~\ref{thm:p2-finite-power} gives
\eqref{eq:p2-quadratic} with $m=2$, since
$\operatorname{dist}_{L^2}(f,\mathcal N_2)=\lVert f-f_*\rVert_2$.  The
power two cannot be lowered.  Choose a nonzero
$\varphi\in C_c^\infty(0,1)$ with $\langle f_*,\varphi\rangle_2=0$.
Since $f_*$ is continuous and strictly positive on $(0,1)$, it has a
positive lower bound on $\operatorname{supp}\varphi$.  Thus
$h_t:=\Gamma_{f_*}(t\varphi)$ is nonnegative and has norm one for all
sufficiently small $|t|$.  Real analyticity and criticality give
\[
 \mathsf J_2-\mathcal J_2(h_t)=O(t^2),\qquad
 \lim_{t\to0}\frac{\|h_t-f_*\|_2}{|t|}=\|\varphi\|_2>0.
\]
Hence no positive uniform lower bound with a power smaller than two can
hold, even locally in the nonnegative unit sphere.
\qed

\begin{remark}\label{rem:p2-fold}
As long as $\sup Z_f<1$, $E(\beta)$ is the Dirichlet energy of the
positive radial solution of the Moser--Trudinger Euler--Lagrange equation
on the disk with maximum $\beta/(2\sqrt\pi)$
(Proposition~\ref{prop:disk-correspondence}).  Malchiodi and Martinazzi
\cite{MaMa2014} proved that this energy has a maximum which exceeds its
limiting value, and Mancini and Martinazzi \cite{MM2017} proved that it
tends to its limit from above as $\beta\to\infty$.  Numerically, $E$
increases from $E\approx0.3223$ at $\beta=1$ through $E(\beta_*)=1$ to a
single maximum $E\approx1.01095$ at $\beta\approx3.98460$, where
$\mathfrak g$ changes sign.  For larger $\beta$ it decreases, while
$\sup Z_f\to1$ and $A^2\to e/(1+e)$, which is the optimal $x_0$ in
Proposition~\ref{prop:contact-profiles} for $p=2$.  Thus $\dd E/\dd\beta>0$
fails for $\beta\gtrsim3.98$.
\end{remark}

\section{The Moser--Trudinger problem on the disk}
\label{sec:disk}

In this section $n=2$, so that $\alpha_2=4\pi$ and $\kappa_2=2\sqrt\pi$ in
\eqref{eq:radial-change}.  By scaling it suffices to consider the unit
disk $D$.  For a radial function $u$ on $D$ we use
\begin{equation}\label{eq:disk-change}
 w(t)=\kappa_2\,u\bigl(e^{-t/2}\bigr),
 \quad
 \int_D|\nabla u|^2=\int_0^\infty w'^2\,\dd t,
 \quad
 \frac1{|D|}\int_De^{4\pi u^2}=\int_0^\infty e^{w^2}\,\dd\nu,
\end{equation}
which is \eqref{eq:energy-identity}--\eqref{eq:exponential-identity} with
$R=1$.  In particular $\log\mathrm{MT}(D)=\mathsf K_2$: radial
functions $u\in H_0^1(D)$ correspond to functions $w$ with $w(0)=0$ and
$w'\in L^2$, and by Proposition~\ref{prop:ps} the supremum defining
$\mathrm{MT}(D)$ may be taken over radial functions.

\subsection{From dual critical points to radial solutions}

\begin{proposition}
\label{prop:disk-correspondence}
Let $(\lambda,F)$ be a solution of \eqref{eq:p2-dn} as in
Subsection~\ref{subsec:p2-endpoint}: $F>0$ on $(0,1]$, $f=F'$ bounded, and
$\sup_{0<x\leq1}F(x)^2/x<1$.  Let $E=\int_0^1f^2$, $A=F(1)$, and
$\beta=\sqrt\lambda\,f(0)$.  With $z(s)=Z_f(e^{-s})$, $a=(1-z)^{-1}$, and
$T(s)=\int_0^sa$, define $w$ by
\begin{equation}\label{eq:disk-w}
 w(T(s))=\lambda^{-1/2}\int_0^sa(r)F(e^{-r})\,\dd r,
\end{equation}
and let $u(r)=\kappa_2^{-1}w(-2\log r)$.  Then $u\in C^\infty(\bar D)$ is
positive and radial in $D$, vanishes on $\partial D$, and satisfies
\[
 -\Delta u=\frac{4(1-A^2)}\lambda\,u\,e^{4\pi u^2}\ \text{ in }D,
 \qquad
 u(0)=\frac\beta{2\sqrt\pi},
 \qquad
 \int_D|\nabla u|^2=E,
\]
and
\[
 \log\Bigl(\frac1{|D|}\int_De^{4\pi u^2}\Bigr)=\mathcal J_2(f)=-\log(1-A^2).
\]
\end{proposition}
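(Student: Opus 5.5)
The plan is to undo the change of variables $x=e^{-s}$ and read everything off the Euler--Lagrange equation \eqref{eq:DHL-euler} for $p=2$. With $x=e^{-r}$ we have $\dd x=x\,\dd r$ and $a=(1-z)^{-1}=x/(x-F^2)$. Hence integrating $\lambda F''=-F/(x-F^2)$ from $y=e^{-s}$ to $1$, using $F'(1)=0$, gives
\[
 \lambda f(e^{-s})=\int_0^s a(r)F(e^{-r})\,\dd r=\lambda^{1/2}w(T(s)),
\]
that is, $w(T(s))=\lambda^{1/2}f(e^{-s})$. Differentiating \eqref{eq:disk-w} gives $w'(T(s))=\lambda^{-1/2}F(e^{-s})$. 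So $w$ is bounded, increasing in $t$, and $w(\infty)=\lambda^{1/2}f(0)=\beta$, which yields $u(0)=\beta/(2\sqrt\pi)$. Positivity of $u$ in $D$ comes from $F>0$ and $f>0$ on $(0,1)$, and $u=0$ on $\partial D$ from $f(1)=0$.

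For the energy, \eqref{eq:disk-change} gives
\[
 \int_D|\nabla u|^2=\int_0^\infty w'(T(s))^2a(s)\,\dd s
 =\lambda^{-1}\int_0^1\frac{F^2}{x-F^2}\,\dd x=E,
\]
where the last equality is the identity $\int F^2/(x-F^2)=\lambda E$ from the proof of Proposition~\ref{prop:p2-pohozaev} (multiply the equation by $F$ and integrate by parts).

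The key identity relates $w$ to the weight:
\[
 e^{-s}\bigl(1-z(s)\bigr)=(1-A^2)\,e^{w(T(s))^2-T(s)}.
\]
I will prove it by comparing logarithmic derivatives. Since $z=e^sF(e^{-s})^2$, we get $z'=z-2Ff$ (arguments at $e^{-s}$). Then
\[
 \frac{\dd}{\dd s}\bigl[T-s+\log(1-z)\bigr]=a-1-\frac{z'}{1-z}=2aFf,
\]
while $\frac{\dd}{\dd s}w(T)^2=2w(T)\,w'(T)\,a=2aFf$ by the two formulas above. At $s=0$ we have $T=0$, $w=0$ and $z(0)=F(1)^2=A^2$, which fixes the constant. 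Two consequences follow.

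\emph{The equation.} Differentiating $w'(T(s))=\lambda^{-1/2}F(e^{-s})$ gives $w''(T)\,a=-\lambda^{-1/2}e^{-s}f$. Therefore
\[
 w''(T)=-\lambda^{-1}e^{-s}(1-z)\,w(T)=-\frac{1-A^2}{\lambda}\,w\,e^{w^2-t}\Big|_{t=T(s)}.
\]
Since $\Delta u=4e^{t}w''/\kappa_2$ in the variable $t=-2\log r$, this is exactly $-\Delta u=\frac{4(1-A^2)}{\lambda}u\,e^{4\pi u^2}$ for $0<|x|<1$.

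\emph{The value.} The substitution $t=T(s)$ gives
\[
 \int e^{w^2}\dd\nu=\int e^{w(T)^2-T}a\,\dd s=(1-A^2)^{-1}\int e^{-s}\dd s=(1-A^2)^{-1}.
\]
Proposition~\ref{prop:p2-pohozaev} then identifies $-\log(1-A^2)$ with $\mathcal J_2(f)$.

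The step I expect to need the most care is regularity: $u\in C^\infty(\bar D)$, and in particular at the origin, since so far the equation holds only on the punctured disk. Here $u$ is bounded (by $\beta/(2\sqrt\pi)$) and has finite Dirichlet energy, so a point is a removable set. Testing against cutoffs that vanish near $0$ shows that $u$ is a weak $H^1$ solution on all of $D$ with bounded right-hand side. Elliptic regularity then gives $u\in C^{1,\alpha}$, and bootstrapping gives $C^\infty$ in $D$. Up to $\partial D$ ($s$ near $0$, i.e. $x$ near $1$), the solution is smooth because $F$ solves a nonsingular ODE near $x=1$. Alternatively, the same conclusion follows from boundary elliptic regularity for a bounded weak solution with zero trace.
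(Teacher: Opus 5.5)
Your proposal is correct and follows the paper's proof essentially step for step: the same integrated Euler--Lagrange formula $w(T(s))=\sqrt\lambda\,f(e^{-s})$, the same derivative comparison with matching values at $s=0$ for the identity $e^{-s}(1-z)=(1-A^2)e^{w^2-T}$, and the same derivation of the equation, the energy and the value via Proposition~\ref{prop:p2-pohozaev}. The only difference is at the origin: you use removability of a point for a bounded $H^1$ solution, whereas the paper uses the decay $w'(t)\le C'e^{-t}$ (from $F(x)\le\|f\|_\infty x$ and the boundedness of $T(s)-s$) to get $U\in C^1(\bar D)$ with vanishing flux; both arguments work.
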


When $E=1$, \eqref{eq:disk-w} is the reconstruction
\eqref{eq:extremal-reconstruction}, since then
$\lVert C\rVert_2^2=\int_0^1F^2/(x-F^2)=\lambda$.  In general the factor
$\lambda^{-1/2}$ normalizes $\int w'^2$ to $E$ rather than to one.

\begin{proof}
Since $\sup Z_f<1$, $a$ is bounded and $T$ is an increasing bi-Lipschitz
bijection of $[0,\infty)$.  Write $x=e^{-s}$ throughout.  Integrating
\eqref{eq:p2-dn} from $x$ to $1$ gives
\begin{equation}\label{eq:disk-euler}
 \lambda f(x)=\int_x^1\frac{F(y)}{y-F(y)^2}\,\dd y .
\end{equation}

\emph{Derivative, energy, and values.}  The chain rule gives
$w'(T(s))=\lambda^{-1/2}F(x)$.  Hence
\[
 \int_0^\infty w'^2\,\dd t=\lambda^{-1}\int_0^\infty F(e^{-s})^2a(s)\,\dd s
 =\lambda^{-1}\int_0^1\frac{F^2}{x-F^2}\,\dd x=E,
\]
where the last step is \eqref{eq:p2-dn} multiplied by $F$ and integrated
by parts.  Substituting $y=e^{-r}$ in \eqref{eq:disk-w} and using
\eqref{eq:disk-euler},
\begin{equation}\label{eq:disk-value}
 w(T(s))=\lambda^{-1/2}\int_x^1\frac{F(y)}{y-F(y)^2}\,\dd y
 =\sqrt\lambda\,f(x).
\end{equation}
Thus $w$ increases from $w(0)=\sqrt\lambda f(1)=0$ to
$w(\infty)=\sqrt\lambda f(0)=\beta$.

\emph{An identity for $w^2$.}  We claim that for all $s\geq0$
\begin{equation}\label{eq:disk-gibbs}
 w(T(s))^2-T(s)=\log x+\log\bigl(1-z(s)\bigr)-\log(1-A^2).
\end{equation}
Both sides vanish at $s=0$, because $z(0)=A^2$.  By the formulas above,
the derivative of the left-hand side is $a(2ww'-1)=a(2fF-1)$.  Since
$z=F^2/x$, we have $\dd z/\dd s=-(2Ff-z)$, and the derivative of the
right-hand side is $-1+(2Ff-z)/(1-z)=a(2fF-1)$.  This proves
\eqref{eq:disk-gibbs}.  Consequently $e^{w^2-t}=x(1-z)/(1-A^2)$ at
$t=T(s)$, and
\[
 \int_0^\infty e^{w^2}\,\dd\nu
 =\frac1{1-A^2}\int_0^\infty x(1-z)\,a\,\dd s
 =\frac1{1-A^2}.
\]
By Proposition~\ref{prop:p2-pohozaev}, $-\log(1-A^2)=\mathcal J_2(f)$.

\emph{Euler equation.}  From $w'(T(s))=\lambda^{-1/2}F(x)$,
\[
 w''(T(s))=\frac1a\,\frac{\dd}{\dd s}\bigl[\lambda^{-1/2}F(e^{-s})\bigr]
 =-\lambda^{-1/2}(1-z)\,x\,f(x),
\]
while \eqref{eq:disk-value} and \eqref{eq:disk-gibbs} give
$w\,e^{w^2-t}=\sqrt\lambda\,f(x)\,x(1-z)/(1-A^2)$.  Therefore
\begin{equation}\label{eq:disk-w-euler}
 -w''=\frac{1-A^2}\lambda\,w\,e^{w^2-t}\qquad(t>0).
\end{equation}

\emph{Back to the disk.}  For the radial function $U=\kappa_2u$, so that
$U(r)=w(-2\log r)$, one has $\Delta U=4e^tw''(t)$ with $t=-2\log r$, and
\eqref{eq:disk-w-euler} becomes $-\Delta U=4(1-A^2)\lambda^{-1}Ue^{U^2}$ in
$D\setminus\{0\}$.  Since $F(x)\leq\lVert f\rVert_\infty x$, we have
$w'(T(s))\leq Ce^{-s}$.  Moreover $T(s)-s=\int_0^sz/(1-z)$ is bounded,
because $z(r)\leq\lVert f\rVert_\infty^2e^{-r}$.  Hence $w'(t)\leq C'e^{-t}$
and $|U_r|=2r^{-1}|w'|\leq2C'r$.  Thus $U\in C^1(\bar D)$ with
$U_r(0)=0$, and the flux $2\pi rU_r$ tends to zero at the origin, so the
equation holds in $D$ in the sense of distributions.  Its right-hand side
is bounded, and elliptic regularity gives $U\in C^\infty(\bar D)$.  Dividing
by $\kappa_2=2\sqrt\pi$ gives the equation for $u$, and
\eqref{eq:disk-change} gives the energy and the value.
\end{proof}

\subsection{Uniqueness of the extremal}

\begin{theorem}
\label{thm:disk-unique}
The extremals of $\mathrm{MT}(D)$ are exactly $\pm u_*$, where
$u_*\in C^\infty(\bar D)$ is positive, radial, and strictly decreasing.
Moreover $u_*$ is the function of
Proposition~\ref{prop:disk-correspondence} built from $f_*$, so that
$u_*(0)=\beta_*/(2\sqrt\pi)$, and $\mathrm{MT}(D)=e^{\mathsf J_2}$.
\end{theorem}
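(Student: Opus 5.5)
The plan is to prove existence and the identification of the value first, then show that every extremal is radial, and finally use the deficit identity to pin down its dual function.

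\emph{Existence and value.} By Theorem~\ref{thm:B}, $\mathcal N_2=\{f_*\}$. Proposition~\ref{prop:euler-reconstruction} makes $f_*$ satisfy \eqref{eq:p2-dn} with $\int f_*^2=1$, and Proposition~\ref{thm:strict-contact} gives $\sup Z_{f_*}<1$. Proposition~\ref{prop:disk-correspondence} then builds from $f_*$ a positive radial $u_*\in C^\infty(\bar D)$ with $\int|\nabla u_*|^2=E=1$, $u_*(0)=\beta_*/(2\sqrt\pi)$, and
\[
\log\mathcal M_D(u_*)=\mathcal J_2(f_*)=\mathsf J_2 .
\]
By Theorem~\ref{thm:exact-duality} and \eqref{eq:disk-change}, $\log\mathrm{MT}(D)=\mathsf K_2=\mathsf J_2$. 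Hence $u_*$ is an extremal and $\mathrm{MT}(D)=e^{\mathsf J_2}$. Strict monotonicity follows from \eqref{eq:disk-value}: $w(T(s))=\sqrt\lambda f_*(e^{-s})$, and $f_*$ is strictly decreasing, so $w$ is strictly increasing in $t$ and $u_*$ is strictly decreasing in $r$.

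\emph{Reduction of an arbitrary extremal to the radial case.} Let $u$ be an extremal. Then $\|\nabla u\|_2=1$, since otherwise scaling up would increase $\mathcal M_D$. Also $|u|$ is an extremal. The Euler--Lagrange equation with a Lagrange multiplier $\mu>0$ reads $-\Delta|u|=\mu|u|e^{4\pi u^2}\ge0$. The strong maximum principle gives $|u|>0$ in $D$, so $u$ has one sign. Replacing $u$ by $-u$ if needed, $u>0$ solves a semilinear equation with smooth right-hand side on a disk; elliptic regularity gives $u\in C^2(\bar D)$. The theorem of Gidas--Ni--Nirenberg \cite{GNN1979} then shows that $u$ is radial and strictly decreasing.

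\emph{Identification of the radial extremal.} Pass to $w$ via \eqref{eq:disk-change}. Now $w$ is bounded, because $u$ is, so $w=w_M$ for large $M$. The deficit identity, Theorem~\ref{prop:deficit-decomposition}, gives $D_1=D_2=D_3=0$, with $h=h_H$ and $z=z_h$:
\begin{itemize}
\item[(1)] $D_1=0$ means $\mathcal J_2(f)=\mathsf J_2$ for $f(x)=h(-\log x)$. Here $\|f\|_2=1$, and $f\ge0$ because $H=w\circ T\ge0$. Hence $f\in\mathcal N_2$, so $f=f_*$.
\item[(2)] $D_2=0$ forces $a=(1-z)^{-1}$ a.e.
\item[(3)] $D_3=0$ is equality in H\"older's inequality. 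It forces $g$ to be the normalized H\"older optimizer for $C=a^{1/2}B_h$.
\end{itemize}
Thus $a$, $T$ and $g$ are all determined by $f_*$, and $w=H\circ T^{-1}$ coincides with the reconstruction \eqref{eq:extremal-reconstruction}. The remark after Proposition~\ref{prop:disk-correspondence} identifies that reconstruction with \eqref{eq:disk-w} when $E=1$. Hence $u=u_*$.

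The main obstacle is this last step: we must check that the equality conditions pin down $w$ completely. Concretely, $T$ must be recovered from $a$ together with $T(0)=0$, and $g$ must be fixed exactly, including its normalization and sign. For the H\"older equality I would note that $\int ga^{1/2}B_h=\|H\|_2=1=\|g\|_2$ and $\int az=1$, which makes $g=C/\|C\|_2$. For the sign, I would use that $B_h>0$ because $f_*>0$.
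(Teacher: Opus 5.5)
Your proposal is correct and is essentially the paper's proof: regularity, the strong maximum principle and Gidas--Ni--Nirenberg give a positive radial extremal; the vanishing of $D_1,D_2,D_3$ in Theorem~\ref{prop:deficit-decomposition} forces $f_H=f_*$, $a=(1-z)^{-1}$ and $g=C/\lVert C\rVert_2$; and Proposition~\ref{prop:disk-correspondence} applied to $f_*$ gives the identification and the value (your direct matching with \eqref{eq:extremal-reconstruction} is the same step as the paper's final identification). Two small corrections: first, at the extremal $\lVert H\rVert_{L^2(\nu)}^2=\int az\,\dd\nu=\int_0^1F_*^2/(x-F_*^2)\,\dd x=\lambda_*\approx5.9$, not $1$, although the Cauchy--Schwarz equality you need still follows directly from $D_3=0$ and $\lVert g\rVert_2=1$; second, the bootstrap cannot start from ``smooth right-hand side'' alone, because one first needs $e^{bu^2}\in L^1(D)$ for every $b>0$, which the paper obtains by splitting $u$ into a $C_c^\infty$ part and a part of small Dirichlet norm.
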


\begin{proof}
\emph{Step 1: shape of extremals.}  Extremals exist \cite{CC1986}; they
are also obtained from Proposition~\ref{prop:euler-reconstruction} and
\eqref{eq:disk-change}.  Every extremal $u$ satisfies
$\lVert\nabla u\rVert_2=1$, since otherwise rescaling would increase the
integral.  It solves the Euler--Lagrange equation
$-\Delta u=\mu\,u\,e^{4\pi u^2}$ with $\mu>0$.  For a fixed $u\in H_0^1(D)$
one has $e^{bu^2}\in L^1(D)$ for every $b>0$.  Indeed, write
$u=v_1+v_2$, where $v_1\in C_c^\infty(D)$ and $\|\nabla v_2\|_2$ is small
enough that $2b\|\nabla v_2\|_2^2\leq4\pi$, and apply the
Moser--Trudinger bound to
\[
 e^{bu^2}\leq e^{2b\|v_1\|_\infty^2}e^{2bv_2^2}.
\]
The right-hand side of the Euler--Lagrange equation therefore belongs to
$L^s(D)$ for every finite $s>1$.  Elliptic boundary regularity and
bootstrapping on the smooth disk give $u\in C^\infty(\bar D)$.
Since $|u|$ is also an extremal, the
same argument applies to it; the strong maximum principle gives $|u|>0$
in $D$, and the Gidas--Ni--Nirenberg theorem \cite{GNN1979} shows that
$|u|$ is radial and strictly decreasing.  The continuous function $u$ has
no zero in $D$, so $u=\pm|u|$.  It remains to show that there is only one
positive radial extremal.

\emph{Step 2: uniqueness via the deficit decomposition.}  Let $u$ be a
positive radial extremal and define $w$ by \eqref{eq:disk-change}.  Then
$w(0)=0$, $\int w'^2=1$, $0\leq w\leq w(\infty)=\kappa_2u(0)<\infty$, and
$\log\int e^{w^2}\dd\nu=\log\mathrm{MT}(D)=\mathsf K_2$.  Since $w$ is
bounded, the truncation \eqref{eq:wM} with $M\geq(\sup w)^2$ gives
$w_M=w$, and the Gibbs density of $w$ is bounded above and below.  Thus
Proposition~\ref{prop:entropy-jacobian}
and~\ref{prop:deficit-decomposition} apply to $w$ itself.  The left-hand
side of \eqref{eq:deficit-decomposition} vanishes and the three terms are
nonnegative, so
\[
 D_1=D_2=D_3=0.
\]
We read off the three conditions in turn.
\begin{enumerate}
 \item For $p=2$ one has $h_H=H/\lVert H\rVert_{L^2(\nu)}$, which is
 nonnegative and normalized.  Put $f_H(x)=h_H(-\log x)$.  Then
 $\lVert f_H\rVert_{L^2(0,1)}=1$, $z_{h_H}=Z_{f_H}$, and
 $\int\Phi^*(z)\,\dd\nu=\mathcal J_2(f_H)$.  Hence $D_1=0$ says that
 $f_H\in\mathcal N_2$, and Theorem~\ref{thm:B} gives $f_H=f_*$.
 In particular, $z$ is determined by $f_*$.
 \item $D_2=\int\Phi(a(1-z))\,\dd\nu=0$, and $\Phi$ vanishes
 only at one, so $a=(1-z)^{-1}$ almost everywhere.  Thus $a$ and
 $T(s)=\int_0^sa$ are determined.
 \item By the proof of Theorem~\ref{prop:deficit-decomposition},
 \[
 \lVert H\rVert_{L^2(\nu)}=\int g\,a^{1/2}B_{h_H}\,\dd r
 \leq\lVert g\rVert_2\Bigl(\int az\,\dd\nu\Bigr)^{1/2},
 \]
 where
 $g=w'(T)\,a^{1/2}$ satisfies $\lVert g\rVert_2=\lVert w'\rVert_2=1$.  Thus
 $D_3=0$ forces equality in the Cauchy--Schwarz inequality,
 that is, $g=C/\lVert C\rVert_2$ with $C=a^{1/2}B_{h_H}>0$.
\end{enumerate}
Condition (1) determines $h_H$, that is, $H$ up to a positive factor;
(2) and (3) determine $a$ and $g$, and hence $H=H_ag$.  Therefore
$w=H\circ T^{-1}$ and $u$ are determined.

\emph{Step 3: identification.}  Apply
Proposition~\ref{prop:disk-correspondence} to $F_*=\int f_*$, for which
$E=1$ and $\mathcal J_2(f_*)=\mathsf J_2=\mathsf K_2$.  The
resulting radial function has Dirichlet energy one and value
$e^{\mathsf K_2}=\mathrm{MT}(D)$, so it is an extremal.  By Step 2
it is the positive extremal $u_*$, and $u_*(0)=\beta_*/(2\sqrt\pi)$.
\end{proof}

\subsection{Radial solutions}

For $\beta>0$ let $\omega_\beta$ be the solution of
$\omega''+r^{-1}\omega'+\omega e^{\omega^2}=0$ with $\omega(0)=\beta$ and
$\omega'(0)=0$, that is, of the Volterra equation
$\omega(r)=\beta-\int_0^rs\log(r/s)\,\omega(s)e^{\omega(s)^2}\,\dd s$.  Since
$e^{\omega^2}\geq1$, Sturm comparison with $J_0$ shows that $\omega_\beta$
has a first zero $r_\beta<j_{0,1}$.  Every positive radial solution $U$ of
$-\Delta U=\mu Ue^{U^2}$ in $D$ with $U=0$ on $\partial D$ and
$U(0)=\beta$ equals $\omega_\beta(r_\beta\,\cdot)$, with $\mu=r_\beta^2$.  Hence
the positive radial solutions of the Euler--Lagrange equation are
parametrized by their maximum.  We write
\[
 \Lambda(\beta):=\int_D\bigl|\nabla\bigl(\kappa_2^{-1}\omega_\beta(r_\beta\,\cdot)\bigr)\bigr|^2,
\]
so that $\Lambda(\beta)=1$ is the energy constraint of $\mathrm{MT}(D)$.

\begin{corollary}
\label{cor:disk-slope}
For $\beta\in W$ one has $\Lambda(\beta)=E(\beta)$, the dual energy of
Section~\ref{sec:p2-quadratic}.  Moreover $\Lambda$ is $C^1$ on $W$ with
$\Lambda'>0$; in particular $\Lambda'(\beta_*)>0$.
\end{corollary}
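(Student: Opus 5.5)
For $\beta\in W$, Proposition~\ref{prop:p2-cap}(2) and Lemma~\ref{lem:p2-shooting} give a solution $(\lambda(\beta),F_\beta)$ of \eqref{eq:p2-dn} with $F_\beta>0$, $f_\beta=F_\beta'$ bounded and $\sup Z_{f_\beta}<1$. Proposition~\ref{prop:disk-correspondence} then yields a positive radial solution $u_\beta\in C^\infty(\bar D)$ of $-\Delta u=\mu u e^{4\pi u^2}$, $u=0$ on $\partial D$, with $\mu=4(1-A^2)/\lambda$, $u_\beta(0)=\beta/(2\sqrt\pi)$ and $\int_D|\nabla u_\beta|^2=E(\beta)$. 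Put $U=\kappa_2u_\beta$. It solves $-\Delta U=\mu' Ue^{U^2}$ with $U(0)=\beta$. By the uniqueness statement preceding the corollary, a positive radial solution is determined by its maximum: $U=\omega_\beta(r_\beta\,\cdot)$. Hence $\Lambda(\beta)=\int_D|\nabla u_\beta|^2=E(\beta)$. The parameter matches exactly, since the maximum of $u_\beta$ is $u_\beta(0)=w(\infty)/\kappa_2=\beta/\kappa_2$ by \eqref{eq:disk-value}.

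For the regularity and sign, I would not work with $\omega_\beta$ directly but transfer from the dual side. In the proof of Theorem~\ref{thm:B} it was shown that on $W$ the map $\beta\mapsto\lambda(\beta)$ is continuous, strictly increasing and locally inverse to the $C^1$ implicit-function curves $\lambda\mapsto F_\lambda$, along which $\dd E/\dd\lambda>0$ by \eqref{eq:p2-gap} and $\mathfrak g>0$. The gap is that $\beta\mapsto\lambda(\beta)$ is only known to be continuous and monotone. I would upgrade it to $C^1$ with positive derivative. Along the local curve, $\beta(\lambda)=\sqrt\lambda\,F_\lambda'(0)$ is $C^1$ in $\lambda$, because $\lambda\mapsto F_\lambda\in C^2_{\rm DN}$ is $C^1$. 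It suffices to show $\dd\beta/\dd\lambda\neq0$; the inverse function theorem then makes $\lambda(\beta)$ a $C^1$ function, and $\Lambda=E\circ\lambda$ is $C^1$ with $\Lambda'=E'(\lambda)\lambda'(\beta)$.

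The main obstacle is this nonvanishing of $\dd\beta/\dd\lambda$. Monotonicity alone gives only $\dd\beta/\dd\lambda\geq0$. I would argue through the shooting formulation instead. The solution $G$ of \eqref{eq:p2-ivp} depends smoothly on $\varepsilon=\beta^{-2}$, by smooth dependence for the Volterra equation \eqref{eq:p2-volterra}. Since $Y$ is a simple zero of $G'$, the implicit function theorem makes $Y(\beta)$ smooth, and hence $X(\beta)=\varepsilon Y$ and $\lambda=1/X$ are $C^1$ in $\beta$ directly. Then $\beta\mapsto F_\beta(x)=G(Yx)/\sqrt Y$ is $C^1$ into $C^2_{\rm DN}$, and $E(\beta)=\mathcal E(\log Y(\beta))$ is $C^1$. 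Differentiating the identity $F_\beta=F_{\lambda(\beta)}$ gives $\partial_\beta F_\beta=\lambda'(\beta)\dot F$. If $\lambda'(\beta)=0$, then $\partial_\beta F_\beta=0$. But differentiating $\beta=\sqrt{\lambda}F_\beta'(0)$ in $\beta$ would then give $1=\sqrt\lambda\,\partial_\beta F_\beta'(0)=0$, a contradiction. So $\lambda'(\beta)>0$, using the monotonicity already established.

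Therefore $\Lambda'(\beta)=E'(\lambda)\lambda'(\beta)>0$ on $W$. Since $\beta_*\in W$, as shown in the proof of Theorem~\ref{thm:B}, in particular $\Lambda'(\beta_*)>0$.
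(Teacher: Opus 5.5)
Your proposal is correct and follows the paper's proof essentially step by step. $\Lambda(\beta)=E(\beta)$ comes from Proposition~\ref{prop:disk-correspondence} together with the uniqueness of positive radial solutions with prescribed maximum, and $\lambda'(\beta)\neq0$ comes from the fact that $\beta\mapsto\lambda(\beta)=1/(\varepsilon Y(\varepsilon))$ and $\lambda\mapsto\sqrt\lambda\,F_\lambda'(0)$ are mutually inverse $C^1$ maps; the first is $C^1$ by the Volterra equation \eqref{eq:p2-volterra} and the simplicity of the zero $Y$.

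Your differentiation of $\beta=\sqrt{\lambda(\beta)}\,F_{\lambda(\beta)}'(0)$ is exactly that chain-rule argument written out. After it, the monotonicity of $\lambda$ from the proof of Theorem~\ref{thm:B}, together with \eqref{eq:p2-gap} and $\mathfrak g>0$, gives $\Lambda'=E'(\lambda)\lambda'(\beta)>0$, as in the paper. (The constant you call $\mu'$ in the equation for $U$ is just $\mu$.)
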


\begin{proof}
For $\beta\in W$ the dual solution exists and satisfies $\sup Z_f<1$
(Proposition~\ref{prop:p2-cap}).  Proposition~\ref{prop:disk-correspondence}
maps it to a positive radial solution with maximum $\beta/\kappa_2$ and
energy $E(\beta)$.  By the uniqueness just described, this solution is
$\kappa_2^{-1}\omega_\beta(r_\beta\,\cdot)$, so $\Lambda(\beta)=E(\beta)$.

The local curve $\lambda\mapsto F_\lambda\in C^2_{\rm DN}$ through a
solution with parameter in $W$, described before
Proposition~\ref{prop:p2-endpoint}, is $C^1$, so
$\beta(\lambda)=\sqrt\lambda F_\lambda'(0)$ is $C^1$.  Conversely, $\lambda(\beta)=1/(\varepsilon Y(\varepsilon))$ with
$\varepsilon=\beta^{-2}$.  The Volterra equation \eqref{eq:p2-volterra} is
$C^1$ in $(g,\varepsilon)$ where $sg^2<1$, so $G'$ depends $C^1$ on
$\varepsilon$.  Since $Y$ is a simple zero of $G'$, the functions
$Y(\varepsilon)$ and $\lambda(\beta)$ are $C^1$.  As the two maps are
mutually inverse with finite derivatives, $\dd\lambda/\dd\beta\neq0$ on
$W$.  By the proof of Theorem~\ref{thm:B}, $\lambda$ is
increasing on $W$, so $\dd\lambda/\dd\beta>0$.  With
\eqref{eq:p2-gap} and $\mathfrak g>0$ we obtain
$\dd E/\dd\beta=E'(\lambda)\,\dd\lambda/\dd\beta>0$ on $W$.
\end{proof}

\begin{remark}\label{rem:disk-energy-one}
For $0<\beta\leq5$ one has $\Lambda(\beta)<1$ if $\beta<\beta_*$ and
$\Lambda(\beta)>1$ if $\beta>\beta_*$.  Since
$\beta_{\min}<0.83<2/j_{0,1}$, two ranges have to be considered.  For
$\beta\in[\beta_{\min},5]$, parts (1)--(3) of
Proposition~\ref{prop:p2-cap} give a dual solution with $\sup Z_f<1$, so
$\Lambda(\beta)=E(\beta)$ as in the proof of
Corollary~\ref{cor:disk-slope}.  Moreover $E\leq0.99997$ on
$[\beta_{\min},\beta_-]$, $E$ is strictly increasing on $W$ by
Corollary~\ref{cor:disk-slope}, and $E\geq1.00004$ on $[\beta_+,5]$.  For
$\beta<2/j_{0,1}$ the hypothesis of Lemma~\ref{lem:p2-small} holds
automatically.  Indeed, as long as $G'>0$ one has $G'\leq1$, hence
$0<G\leq y$ and $Z\leq y$.  Since $y_v=j_{0,1}^2\beta^2/4<1$, the solution
exists with $G>0$ and $Z<1$ up to $\min\{Y,y_v\}$, and the comparison in
the proof of Lemma~\ref{lem:p2-small} shows that $G'$ has a first zero
$Y\leq y_v$.
Thus $\sup Z_f\leq Y<1$, Proposition~\ref{prop:disk-correspondence}
applies, and $\Lambda(\beta)=E(\beta)<1$.  In \cite{MTpaper} we extend
the first statement to $\beta\leq8$.
\end{remark}

\subsection{Nondegeneracy and quadratic stability}

We now return to the primal problem.  Write $\mathcal M=\mathcal M_D$ as
in Theorem~\ref{thm:C}, that is,
\[
 \mathcal M(u)=\frac1{|D|}\int_De^{4\pi u^2}\,\dd x,
 \qquad
 r_*:=r_{\beta_*},
 \qquad
 \mu_*:=r_*^2,
\]
so that $-\Delta u_*=\mu_*u_*e^{4\pi u_*^2}$ by
Theorem~\ref{thm:disk-unique} and the description of the positive radial
solutions above.  Throughout this subsection $\lVert\cdot\rVert$ and
$\langle\cdot,\cdot\rangle$ denote the norm $\lVert\nabla\cdot\rVert_{L^2(D)}$
of $H_0^1(D)$ and its inner product.  Set
\[
 V_*:=\mu_*\bigl(1+8\pi u_*^2\bigr)e^{4\pi u_*^2},
 \qquad
 \mathcal T_{u_*}:=\bigl\{\eta\in H_0^1(D):\langle u_*,\eta\rangle=0\bigr\},
\]
and
\[
 \mathfrak Q_*[\eta]:=\int_D|\nabla\eta|^2\,\dd x-\int_DV_*\eta^2\,\dd x .
\]
Thus $\mathcal T_{u_*}$ is the tangent space at $u_*$ of the unit sphere of
$H_0^1(D)$, and Step~0 below shows that the Hessian of the deficit
$\mathrm{MT}(D)-\mathcal M$ on the unit sphere at $u_*$ is
$(8/\mu_*)\mathfrak Q_*$ on $\mathcal T_{u_*}$.

\begin{proposition}\label{prop:disk-nondeg}
There is $c_{\mathfrak Q}>0$ such that
$\mathfrak Q_*[\eta]\geq c_{\mathfrak Q}\lVert\nabla\eta\rVert_{L^2(D)}^2$
for all $\eta\in\mathcal T_{u_*}$.  Consequently there is $c>0$ such that
\begin{equation}\label{eq:disk-stability}
 \mathrm{MT}(D)-\mathcal M(u)
 \geq c\,\min_\pm\lVert\nabla(u\mp u_*)\rVert_{L^2(D)}^2
\end{equation}
for all $u\in H_0^1(D)$ with $\lVert\nabla u\rVert_{L^2(D)}\leq1$.  The
power two in \eqref{eq:disk-stability} cannot be lowered.
\end{proposition}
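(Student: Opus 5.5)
The proof has three parts: coercivity of $\mathfrak Q_*$ on $\mathcal T_{u_*}$, a local-to-global stability argument, and optimality of the exponent. We begin (Step~0) by computing the second variation. Parametrize the unit sphere near $u_*$ by $\Gamma(\eta)=(u_*+\eta)/\|u_*+\eta\|$ with $\eta\in\mathcal T_{u_*}$. Expanding $\mathcal M$ to second order and using the Euler--Lagrange equation $-\Delta u_*=\mu_*u_*e^{4\pi u_*^2}$ gives the constant $\int u_*\cdot u_*e^{4\pi u_*^2}=1/\mu_*$ times $|D|^{-1}\cdot 8\pi$. The Hessian of the deficit is then a positive multiple of $\mathfrak Q_*$. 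Here the terms $\int|\nabla\eta|^2$ arise from the normalization, and $V_*$ arises from $\frac{d^2}{ds^2}e^{4\pi u^2}$. Smoothness of $u_*$ (Theorem~\ref{thm:disk-unique}) makes $V_*$ bounded, so $\mathfrak Q_*$ is a compact perturbation of the Dirichlet form.

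For coercivity, expand $\eta\in\mathcal T_{u_*}$ in Fourier modes $\eta=\sum_k\eta_k(r)e^{ik\theta}$. Since $u_*$ and $V_*$ are radial, $\mathfrak Q_*$ splits into a sum over modes, and the constraint $\langle u_*,\eta\rangle=0$ involves only $\eta_0$.

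\emph{Radial mode.} Through \eqref{eq:disk-change} and the correspondence of Proposition~\ref{prop:disk-correspondence}, the radial problem is the half-line problem. Its constrained Hessian is equivalent to the coercivity of $\mathfrak q$ on $\mathcal T_{F_*}$, which we obtained in the proof of Theorem~\ref{thm:B}. Alternatively, we rerun the Vakhitov--Kolokolov argument directly in the primal variables, using $\Lambda'(\beta_*)>0$ from Corollary~\ref{cor:disk-slope}. In that version, the Morse index of the radial operator $-\Delta-V_*$ is one, since $\mathfrak Q_*[u_*]<0$ and the form is nonnegative on the tangent space by maximality. The kernel in the radial class is excluded because the linearized radial solution $\partial_\beta$ of the family $\omega_\beta(r_\beta\cdot)$ would otherwise give a degenerate fold, which contradicts $\Lambda'\neq0$. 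The Schur complement is then a negative multiple of $\Lambda'(\beta_*)$.

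\emph{Modes $k\geq1$.} Here we use a ground state representation with $\partial_ru_*$. Differentiating the equation in $x_j$ shows that $\partial_ju_*$ solves $-\Delta v=V_*v$ in $D$, so the radial function $\phi=-u_*'(r)>0$ on $(0,1]$ satisfies $-\phi''-\phi'/r+\phi/r^2=V_*\phi$. Writing $\eta_k=\phi\,\zeta$ and integrating by parts gives
\[
 \int\bigl(|\eta_k'|^2+\tfrac{k^2}{r^2}\eta_k^2-V_*\eta_k^2\bigr)r\,\dd r
 =\int\phi^2|\zeta'|^2r\,\dd r+\int\frac{k^2-1}{r^2}\eta_k^2r\,\dd r
 -[r\phi\phi'\zeta^2]_0^1 .
\]
The boundary term at $r=1$ vanishes because $\eta_k(1)=0$ and $\phi(1)>0$ (Hopf), and the boundary term at $0$ vanishes by regularity. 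Thus the form is nonnegative on each mode $k\geq1$. For $k\geq2$ it is strictly positive. For $k=1$ it vanishes only if $\zeta$ is constant, i.e.\ $\eta_1=c\phi$, but $\phi(1)\neq0$ violates the Dirichlet condition. So the form is positive on every mode $k\geq1$.

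Positivity must be upgraded to coercivity, and this is the step I expect to be the main obstacle. Since $\mathfrak Q_*=\|\cdot\|^2-K$ with $K$ compact, a weakly convergent normalized minimizing sequence with $\mathfrak Q_*\to0$ would produce a nonzero element of the tangent space with $\mathfrak Q_*=0$, contradicting strict positivity. The nonzero limit exists because if the limit vanished then $K\eta_j\to0$ and $\mathfrak Q_*\to1$. One needs uniformity across the infinitely many modes $k$, and the compactness argument provides it once strict positivity has been established mode by mode.

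For the stability estimate \eqref{eq:disk-stability}, we first get the local version near $\pm u_*$ from coercivity via Taylor expansion. This works because $\mathcal M$ is $C^2$ on a neighbourhood of $u_*$ in $H_0^1$: for $\|\nabla\eta\|$ small, Moser--Trudinger bounds $e^{4\pi(u_*+\eta)^2}$ in $L^s$ for $s$ slightly above one, via the splitting trick of Step~1 of Theorem~\ref{thm:disk-unique}. The radial-dilation term $1-r$ is handled as in Theorem~\ref{thm:p2-finite-power}. For the global version, suppose a sequence in the unit ball has deficit tending to zero while staying at a fixed distance from $\{\pm u_*\}$. By Lions' concentration-compactness, the only loss of compactness is concentration, with limiting value at most $1+e<\mathrm{MT}(D)$. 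Hence the sequence converges strongly, after passing to a subsequence, to an extremal, which by Theorem~\ref{thm:disk-unique} is $\pm u_*$, a contradiction. Alternatively, one can reduce via symmetrization and the deficit decomposition to Theorem~\ref{thm:B}. Finally, optimality of the exponent two follows as in Theorem~\ref{thm:B}: for a fixed $\eta\in\mathcal T_{u_*}$, the curve $\Gamma(t\eta)$ has deficit $O(t^2)$ by criticality, while its distance to $u_*$ is comparable to $|t|$.
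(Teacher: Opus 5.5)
Your outline is sound. Step~0, the nonradial modes (the same ground-state identity with $-u_*'$), the local Taylor estimate with the dilation term, the global step via Lions and the Carleson--Chang level, and the optimality of the exponent all match the paper.

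The genuine difference lies in how you treat the radial mode and in the order of the argument. The paper argues by contradiction from the start. Compactness yields a nonzero smooth $\eta\in\mathcal T_{u_*}$ with $-\Delta\eta-V_*\eta=c_1\mu_*u_*e^{4\pi u_*^2}$. After the nonradial modes are removed, testing against $\zeta=\partial_\beta u_\beta|_{\beta_*}$ gives $\frac{c_1}2\Lambda'(\beta_*)=0$, hence $c_1=0$. ODE uniqueness ($\eta=\eta(0)\zeta_0$) and a case split on $\mu'(\beta_*)$ then give $\eta=0$. This never uses invertibility of $L=-\Delta-V_*$ or Maddocks' criterion; those facts, and $\mu'(\beta_*)<0$, are derived only afterwards, in Remark~\ref{rem:disk-nondeg}.

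You instead prove strict positivity first, treating the radial part by the Vakhitov--Kolokolov/Schur-complement criterion as in the dual Proposition~\ref{prop:p2-spectral-reduction}, and then upgrade by compactness. This works, and it gives invertibility of $L$ as a byproduct, but two of your justifications need correcting.

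First, triviality of the radial kernel of $L$ does not follow from $\Lambda'(\beta_*)\neq0$ alone. A radial kernel element in $H_0^1$ is a multiple of $\zeta_0$. This function vanishes on $\partial D$ only if $\mu'(\beta_*)=0$, and then $\zeta_0=\kappa_2\zeta$ with $\langle u_*,\zeta\rangle=\frac12\Lambda'(\beta_*)\neq0$. What excludes this is maximality: a kernel element $v$ with $t=\langle u_*,v\rangle\neq0$ gives $\mathfrak Q_*[v-tu_*]=t^2\mathfrak Q_*[u_*]<0$ on $\mathcal T_{u_*}$.

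Second, the Schur complement is not a priori a negative multiple of $\Lambda'(\beta_*)$. Since $L\zeta=(\mu'(\beta_*)/\mu_*)(-\Delta u_*)$, it equals $\langle u_*,L^{-1}(-\Delta u_*)\rangle=\frac{\mu_*}{2\mu'(\beta_*)}\Lambda'(\beta_*)$. Its sign is therefore that of $\mu'(\beta_*)$, which you do not know in advance; the natural Vakhitov--Kolokolov quantity is $\dd\Lambda/\dd\mu$, not $\dd\Lambda/\dd\beta$. The fix again uses maximality. The inequality $\mathfrak Q_*\geq0$ on $\mathcal T_{u_*}$ forces the Schur complement to be $\leq0$, and $\Lambda'(\beta_*)\neq0$ makes it nonzero, hence negative.

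Finally, neither of your ``alternatives'' is available as stated, so rely on the primal arguments. The equivalence of the primal radial Hessian with coercivity of $\mathfrak q$ on $\mathcal T_{F_*}$ is not proved; the correspondence of Proposition~\ref{prop:disk-correspondence} is nonlinear. Symmetrization plus Theorem~\ref{thm:B} does not by itself control $\lVert\nabla(u\mp u_*)\rVert$ for nonradial $u$.
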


\begin{proof}
\emph{Step 0: second variation.}  Let $u\in H_0^1(D)$ and $b>0$.  The
splitting used in Step~1 of the proof of Theorem~\ref{thm:disk-unique}
shows that $\int_De^{bv^2}$ is bounded uniformly for $v$ in a neighborhood
of $u$ in $H_0^1(D)$.  With Vitali's theorem it follows that
$u\mapsto u^je^{4\pi u^2}$, $j=0,1,2$, is continuous from $H_0^1(D)$ into
$L^m(D)$ for every $m<\infty$.  Since $H_0^1(D)\subset L^m(D)$ for every
$m<\infty$, the functional $\mathcal M$ is of class $C^2$ on $H_0^1(D)$,
with
\[
 D\mathcal M(u)[\eta]=8\int_Due^{4\pi u^2}\eta,
 \qquad
 D^2\mathcal M(u)[\eta,\eta]=8\int_D\bigl(1+8\pi u^2\bigr)e^{4\pi u^2}\eta^2 .
\]
The Euler--Lagrange equation gives
$D\mathcal M(u_*)[\eta]=(8/\mu_*)\langle u_*,\eta\rangle$.

Let $\eta\in\mathcal T_{u_*}$.  The curve
$\varsigma(\tau)=(u_*+\tau\eta)/\lVert u_*+\tau\eta\rVert$ lies on the
unit sphere and satisfies $\varsigma(0)=u_*$, $\varsigma'(0)=\eta$, and
$\varsigma''(0)=-\lVert\eta\rVert^2u_*$.  Since $u_*$ maximizes $\mathcal M$
on the unit sphere,
\[
 0\geq(\mathcal M\circ\varsigma)''(0)
 =D^2\mathcal M(u_*)[\eta,\eta]-\lVert\eta\rVert^2D\mathcal M(u_*)[u_*]
 =-\frac8{\mu_*}\,\mathfrak Q_*[\eta].
\]
Hence $\mathfrak Q_*\geq0$ on $\mathcal T_{u_*}$.  For $\eta\neq0$ the same
curve shows that the power two is optimal: $\mathrm{MT}(D)-\mathcal M(\varsigma(\tau))
=O(\tau^2)$, while $\lVert\varsigma(\tau)-u_*\rVert=|\tau|\,\lVert\eta\rVert
+O(\tau^2)$.

\emph{Step 1: reduction to a kernel element.}  Since
$u_*\in C^\infty(\bar D)$, the potential $V_*$ is bounded, and the
embedding $H_0^1(D)\subset L^2(D)$ is compact.  On the Hilbert space
$\mathcal T_{u_*}$ we may therefore write
$\mathfrak Q_*[\eta]=\langle(I-\mathcal K)\eta,\eta\rangle$ with a compact
self-adjoint operator $\mathcal K$.  By Step~0, $I-\mathcal K\geq0$.  If
$\mathfrak Q_*$ were not coercive, $0$ would belong to the spectrum of
$I-\mathcal K$ and hence, $\mathcal K$ being compact, would be an
eigenvalue.  There would be $\eta\in\mathcal T_{u_*}\setminus\{0\}$ with
$\langle\eta,\xi\rangle-\int V_*\eta\xi=0$ for all
$\xi\in\mathcal T_{u_*}$.  This linear functional of
$\xi\in H_0^1(D)$ vanishes on the kernel of
$\xi\mapsto\langle u_*,\xi\rangle$, so it is a multiple of it: for some
$c_1\in\R$,
\begin{equation}\label{eq:disk-kernel}
 \int_D\nabla\eta\cdot\nabla\xi-\int_DV_*\eta\xi
 =c_1\int_D\nabla u_*\cdot\nabla\xi
 \qquad\text{for all }\xi\in H_0^1(D),
\end{equation}
that is, $-\Delta\eta-V_*\eta=c_1\mu_*u_*e^{4\pi u_*^2}$ weakly.
Elliptic regularity gives $\eta\in C^\infty(\bar D)$.  We show that
$\eta=0$, which is a contradiction.

\emph{Step 2: nonradial modes.}  In polar coordinates
$x=(r\cos\vartheta,r\sin\vartheta)$, let $\eta_k(r)$ be the coefficient of
$\cos k\vartheta$, or of $\sin k\vartheta$, in the Fourier expansion of
$\eta$, where $k\geq1$.  The potential and the right-hand side of
\eqref{eq:disk-kernel} are radial, so
\[
 -(r\eta_k')'+\frac{k^2}r\,\eta_k-rV_*\eta_k=0\quad\text{on }(0,1),
 \qquad \eta_k(1)=0,
\]
and $\eta_k(r)=O(r)$, $\eta_k'(r)=O(1)$ as $r\to0$, because $\eta$ is
smooth.  Let $g_*:=-u_*'$.  Since
$(ru_*')'=-\mu_*ru_*e^{4\pi u_*^2}<0$ and $u_*'(0)=0$, we have $g_*>0$ on
$(0,1]$, and $g_*(r)/r$ tends to a positive limit as $r\to0$.
Differentiating the radial equation for $u_*$ gives
\[
 -(rg_*')'+\frac{g_*}r-rV_*g_*=0 .
\]
Put $\phi_k=\eta_k/g_*$; it is bounded near $0$ and vanishes at $1$.
Writing $\eta_k=g_*\phi_k$ and integrating by parts, with the boundary
terms $rg_*g_*'\phi_k^2$ vanishing at both ends, one obtains the
ground state representation
\[
 \int_0^1r\Bigl[\eta_k'^2+\Bigl(\frac{k^2}{r^2}-V_*\Bigr)\eta_k^2\Bigr]\dd r
 =\int_0^1rg_*^2\phi_k'^2\,\dd r+(k^2-1)\int_0^1\frac{\eta_k^2}r\,\dd r .
\]
Multiplying the equation for $\eta_k$ by $\eta_k$ and integrating shows
that the left-hand side vanishes.  Hence $\phi_k$ is constant, and
$\phi_k(1)=0$ gives $\eta_k=0$.  Thus $\eta$ is radial.

\emph{Step 3: the radial mode.}  By smooth dependence on parameters for
the Volterra equation of $\omega_\beta$ (in the variable $r^2$),
$\omega_\beta(r)$ is smooth in $(\beta,r)$.  The zero $r_\beta$ is simple,
because $(r\omega_\beta')'<0$ on $(0,r_\beta)$ gives
$\omega_\beta'(r_\beta)<0$; by the implicit function theorem $r_\beta$ is
smooth in $\beta$.  Hence $(\beta,x)\mapsto\omega_\beta(r_\beta|x|)$ is
smooth on $(0,\infty)\times\bar D$, and
$u_\beta:=\kappa_2^{-1}\omega_\beta(r_\beta|\cdot|)$ depends in a $C^1$ way
on $\beta$ as a map into $C^2(\bar D)\cap H_0^1(D)$.  Put
$\mu(\beta)=r_\beta^2$, $\dot r_*:=\frac{\dd r_\beta}{\dd\beta}\big|_{\beta=\beta_*}$,
and $\zeta:=\partial_\beta u_\beta|_{\beta=\beta_*}$, a radial function in
$H_0^1(D)$.  Differentiating $-\Delta u_\beta=\mu(\beta)u_\beta
e^{4\pi u_\beta^2}$ and $\Lambda(\beta)=\lVert u_\beta\rVert^2$ at
$\beta_*$ gives
\begin{equation}\label{eq:disk-zeta}
 -\Delta\zeta-V_*\zeta=\mu'(\beta_*)\,u_*e^{4\pi u_*^2}
 =\frac{\mu'(\beta_*)}{\mu_*}(-\Delta u_*),
 \qquad
 \langle u_*,\zeta\rangle=\tfrac12\Lambda'(\beta_*).
\end{equation}
Take $\xi=\zeta$ in \eqref{eq:disk-kernel}, and test \eqref{eq:disk-zeta}
with $\eta$.  By symmetry of the left-hand sides and
$\langle u_*,\eta\rangle=0$,
\[
 \frac{c_1}2\,\Lambda'(\beta_*)
 =\langle\eta,\zeta\rangle-\int_DV_*\eta\zeta
 =\frac{\mu'(\beta_*)}{\mu_*}\,\langle u_*,\eta\rangle=0 .
\]
By Corollary~\ref{cor:disk-slope}, $\Lambda'(\beta_*)>0$, so $c_1=0$.
Hence $\eta$ is a smooth radial solution of $-\Delta\eta=V_*\eta$ that
vanishes on $\partial D$.

A radial solution $v$ of $-\Delta v=V_*v$ that is bounded near the origin
satisfies $rv'(r)=-\int_0^rsV_*v\,\dd s$, since otherwise $v$ would behave
like a nonzero multiple of $\log r$.  Thus $v$ solves a Volterra equation
and is determined by $v(0)$.  One such solution is
$\zeta_0(x):=(\partial_b\omega_b)(r_*|x|)|_{b=\beta_*}$: differentiating
$-\Delta[\omega_b(r_*|x|)]=r_*^2\,\omega_b(r_*|x|)\,e^{\omega_b(r_*|x|)^2}$
in $b$ gives $-\Delta\zeta_0=V_*\zeta_0$, and $\zeta_0(0)=1$.  Hence
$\eta=\eta(0)\,\zeta_0$.  Differentiating $\omega_\beta(r_\beta)=0$ at
$\beta_*$ gives $\zeta_0|_{\partial D}=-\dot r_*\,\omega_{\beta_*}'(r_*)$, and
$\omega_{\beta_*}'(r_*)<0$.  Moreover
$\kappa_2\zeta(x)=\zeta_0(x)+\dot r_*\,|x|\,\omega_{\beta_*}'(r_*|x|)$.
If $\mu'(\beta_*)\neq0$, then $\dot r_*\neq0$, so $\zeta_0\neq0$ on
$\partial D$, and $\eta|_{\partial D}=0$ gives $\eta(0)=0$.  If
$\mu'(\beta_*)=0$, then $\kappa_2\zeta=\zeta_0$, so
$\eta=\eta(0)\kappa_2\zeta$, and
$0=\langle u_*,\eta\rangle=\frac12\eta(0)\kappa_2\Lambda'(\beta_*)$ gives
$\eta(0)=0$.  In both cases $\eta=0$.  This proves the coercivity of
$\mathfrak Q_*$.

\emph{Step 4: the local estimate.}  Let $\lVert u\rVert=1$,
$v=u-u_*$, and $d=\lVert v\rVert$.  Then
$\langle u_*,v\rangle=-d^2/2$, so
$D\mathcal M(u_*)[v]=-(4/\mu_*)d^2$, and Taylor's formula gives
\[
 \mathcal M(u)-\mathcal M(u_*)
 =D\mathcal M(u_*)[v]+\tfrac12D^2\mathcal M(u_*)[v,v]+o(d^2)
 =-\frac4{\mu_*}\,\mathfrak Q_*[v]+o(d^2).
\]
Write $v=v^\perp-\frac{d^2}2u_*$ with $v^\perp\in\mathcal T_{u_*}$.  Then
$\lVert v^\perp\rVert\geq d-d^2/2$ and
$\mathfrak Q_*[v]\geq\mathfrak Q_*[v^\perp]-Cd^3\geq c_{\mathfrak Q}d^2-C'd^3$.
Hence there is $\rho_0>0$ such that
$\mathrm{MT}(D)-\mathcal M(u)\geq(2c_{\mathfrak Q}/\mu_*)\,d^2$ whenever
$\lVert u\rVert=1$ and $d\leq\rho_0$.

Now let $\lVert u\rVert\leq1$ and $d(u):=\lVert u-u_*\rVert\leq\rho_0/2$;
by evenness of $\mathcal M$ the neighborhood of $-u_*$ is treated in the
same way.  Put $\delta=1-\lVert u\rVert$ and $\tilde u=u/\lVert u\rVert$.
Then $\delta\leq d(u)$ and
$\lVert\tilde u-u_*\rVert\leq\delta+d(u)\leq\rho_0$.  Since
$e^{4\pi\tilde u^2}-e^{4\pi(1-\delta)^2\tilde u^2}
\geq4\pi\bigl(1-(1-\delta)^2\bigr)\tilde u^2\geq4\pi\delta\,\tilde u^2$,
\[
 \mathcal M(\tilde u)-\mathcal M(u)\geq4\delta\int_D\tilde u^2
 \geq2\delta\int_Du_*^2,
\]
where the last step holds after decreasing $\rho_0$.  Adding the estimate
for $\tilde u$ and using
$d(u)^2\leq2\lVert\tilde u-u_*\rVert^2+2\delta$, we obtain
$\mathrm{MT}(D)-\mathcal M(u)\geq c_2\,d(u)^2$ with
$c_2=\min\{c_{\mathfrak Q}/\mu_*,\int_Du_*^2\}$.

\emph{Step 5: the global estimate.}  Suppose that
\eqref{eq:disk-stability} fails for every $c>0$.  Then there are $u_k$
with $\lVert u_k\rVert\leq1$ and
$\mathrm{MT}(D)-\mathcal M(u_k)<k^{-1}\min_\pm\lVert u_k\mp u_*\rVert^2
\leq4/k$.  After passing to a subsequence, $u_k\rightharpoonup u$ weakly in
$H_0^1(D)$.  Suppose first that $u=0$.  The rearrangements
$v_k=|u_k|^*$ satisfy $\lVert v_k\rVert\leq1$ and
$\mathcal M(v_k)=\mathcal M(u_k)$ by Proposition~\ref{prop:ps}, and
$v_k\rightharpoonup0$ because
$\lVert v_k\rVert_{L^2}=\lVert u_k\rVert_{L^2}\to0$.  By the
concentration-compactness alternative of Lions
\cite[Theorem~I.6]{Lions1985}, after passing to a subsequence either
$e^{4\pi v_k^2}$ is bounded in $L^p(D)$ for some $p>1$, and then
$\mathcal M(v_k)\to\mathcal M(0)=1$, or $|\nabla v_k|^2$ concentrates at
a point, necessarily the origin, and then
$\limsup_k\mathcal M(v_k)\leq1+e$ by the theorem of Carleson and Chang
\cite{CC1986}.  Both contradict $\mathcal M(v_k)\to\mathrm{MT}(D)$, because
$\mathrm{MT}(D)>1+e$ by Theorem~\ref{thm:A}(iii).  Hence $u\neq0$, and by
\cite[Theorem~I.6]{Lions1985} $e^{4\pi u_k^2}$ is bounded in $L^p(D)$ for
some $p>1$.  By Vitali's theorem, along an almost everywhere convergent
subsequence, $\mathcal M(u)=\lim_k\mathcal M(u_k)=\mathrm{MT}(D)$.  Thus $u$
is an extremal, $\lVert u\rVert=1$, and weak convergence together with
$\lVert u_k\rVert\leq1$ gives $u_k\to u$ in $H_0^1(D)$.  By
Theorem~\ref{thm:disk-unique}, $u=\pm u_*$, so
$\min_\pm\lVert u_k\mp u_*\rVert\to0$, and Step~4 gives
$\mathrm{MT}(D)-\mathcal M(u_k)\geq c_2\min_\pm\lVert u_k\mp u_*\rVert^2$ for
large $k$.  This contradicts the choice of $u_k$ for $k>1/c_2$.
\end{proof}

\begin{remark}\label{rem:disk-nondeg}
Steps~0--3 use only
Theorem~\ref{thm:disk-unique}, the ground state representation, and
$\Lambda'(\beta_*)\neq0$.  This condition is also necessary: if
$\Lambda'(\beta_*)=0$, then $\zeta\in\mathcal T_{u_*}$, and by
\eqref{eq:disk-zeta} it satisfies \eqref{eq:disk-kernel} in place of
$\eta$, with $c_1=\mu'(\beta_*)/\mu_*$; since $\kappa_2\zeta(0)=1$,
$\mathfrak Q_*$ is then not coercive.  Thus, as in
Proposition~\ref{prop:p2-spectral-reduction} for the dual problem,
coercivity on the tangent space is governed by a scalar condition of
Vakhitov--Kolokolov type, which enters through the identity
$\langle u_*,\zeta\rangle=\frac12\Lambda'(\beta_*)$.

The argument also gives information on the unconstrained operator
$L=-\Delta-V_*$ on $H_0^1(D)$.  Since
$\mathfrak Q_*[u_*]=-8\pi\mu_*\int_Du_*^4e^{4\pi u_*^2}<0$ and
$\mathfrak Q_*$ is coercive on $\mathcal T_{u_*}$, the operator $L$ has
Morse index one; a nonzero element $v$ of its kernel would make
$\mathfrak Q_*\leq0$ on the span of $u_*$ and $v$, which is
two-dimensional because $\mathfrak Q_*[v]=0\neq\mathfrak Q_*[u_*]$ and
therefore meets $\mathcal T_{u_*}$ nontrivially.  So $L$ is invertible, $\zeta_0$ does not
vanish on $\partial D$, and $\mu'(\beta_*)\neq0$.  Let $\varphi_*=L^{-1}(-\Delta u_*)$ and
$\sigma_0=\langle u_*,\varphi_*\rangle$.  Writing $\mathfrak Q_*(v,w)$
for the associated bilinear form, the equation for $\varphi_*$ and
$\|u_*\|=1$ give
\[
 \mathfrak Q_*(\varphi_*,\varphi_*)=\sigma_0,
 \qquad
 \mathfrak Q_*(\varphi_*,u_*)=\|u_*\|^2=1.
\]
Thus
$w:=\varphi_*-\sigma_0u_*\in\mathcal T_{u_*}$ and
$0\leq\mathfrak Q_*[w]=-\sigma_0+\sigma_0^2\,\mathfrak Q_*[u_*]$, which
forces $\sigma_0<0$ (if $\sigma_0=0$, coercivity gives $w=0$, hence
$\varphi_*=0$, which is impossible).  By \eqref{eq:disk-zeta},
$\zeta=(\mu'(\beta_*)/\mu_*)\varphi_*$, so
$\frac12\Lambda'(\beta_*)=(\mu'(\beta_*)/\mu_*)\sigma_0$, and therefore
$\mu'(\beta_*)<0$.

Steps~1 and~5 argue by contradiction, so the constants
$c_{\mathfrak Q}$ and $c$ are not explicit.  The companion draft
\cite{MTpaper} gives an explicit spectral gap and an explicit local
stability estimate; its global stability constant remains nonexplicit.
\end{remark}

\subsection{The value of the constant}

\begin{proposition}
\label{prop:disk-constant}
Let
\[
 b_1=\textnormal{\texttt{0x1.a4482190ac31cp+1}},\qquad
 b_2=\textnormal{\texttt{0x1.a44821994337bp+1}},
\]
so that $3.2834512669<b_1<b_2<3.2834512710$.  Then $\beta_*\in(b_1,b_2)$ and
\[
 1.4822252\leq\mathsf J_2\leq1.4822253,
 \qquad
 4.4027320\leq\mathrm{MT}(D)\leq4.4027321,
\]
and $0.9262445014\leq u_*(0)\leq0.9262445026$ for the unit disk.
\end{proposition}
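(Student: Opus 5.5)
The statement is a validated-numerics claim, so the proof will rest on an interval-arithmetic computation. Everything is reduced to the shooting problem of Lemma~\ref{lem:p2-shooting}. By Theorem~\ref{thm:B} and its proof, the unique maximizer corresponds to the unique $\beta_*\in W$ with $E(\beta_*)=1$, and $E$ is strictly increasing on $W$.

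\textbf{Step 1: locate $\beta_*$.} Since $(b_1,b_2)\subset W$, it suffices to certify
\[
E(b_1)<1<E(b_2).
\]
I will integrate the desingularized system \eqref{eq:p2-tau-system} with $\varepsilon=b_i^{-2}$. The starting enclosure is the one from Lemma~\ref{lem:app-start}, near $\tau=-\infty$, and the integration uses the same validated Taylor method as Proposition~\ref{prop:p2-cap}. Along the way I locate the simple first zero $Y$ of $G'=g+g_1$ by event location; simplicity holds because $\frac{d}{d\tau}(g+g_1)<0$. I then enclose $\mathcal E(\log Y)=E(b_i)$. Strict monotonicity of $E$ on $W$, together with the intermediate value theorem, gives $\beta_*\in(b_1,b_2)$.

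The point-parameter runs at $b_1$ and $b_2$ must separate $E$ from $1$. This requires tight enclosures: $E'$ is of order one, so the energy gap is about $4\cdot10^{-9}$ wide. I expect this to be the main obstacle, and would handle it with high precision and small step sizes in Arb, cross-checked in CAPD.

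\textbf{Step 2: bound $\mathsf J_2$.} By Proposition~\ref{prop:p2-pohozaev}, $\mathsf J_2=-\log(1-A_*^2)$, where $A^2(\beta)=G(Y)^2/Y$. I will run the same integration over the whole parameter interval $[b_1,b_2]$ (as an interval input) and enclose $A^2$ uniformly there. Since $\beta_*$ lies in this interval, the enclosure of $A_*^2$ follows without any monotonicity argument. Evaluating $-\log(1-A^2)$ in interval arithmetic gives $1.4822252\le\mathsf J_2\le1.4822253$. Then $\mathrm{MT}(D)=e^{\mathsf J_2}$ (Theorem~\ref{thm:disk-unique}) gives $4.4027320\le \mathrm{MT}(D)\le4.4027321$ after outward-rounded exponentiation. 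As a consistency check, this must be compatible with Lemma~\ref{lem:p2-klow}.

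\textbf{Step 3: bound $u_*(0)$.} Theorem~\ref{thm:disk-unique} gives $u_*(0)=\beta_*/(2\sqrt\pi)$. The inclusion $\beta_*\in(b_1,b_2)$ then gives the stated interval directly by outward-rounded division, since $b_i/(2\sqrt\pi)$ lies within the claimed bounds.

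The only delicate points are Step~1's precision and the width of the $A^2$ enclosure in Step~2. For the latter, the derivative of $A^2$ in $\beta$ is moderate, so a parameter width of about $4\cdot10^{-9}$ comfortably yields the seventh-digit accuracy required.
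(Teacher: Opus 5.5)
Your proposal is correct and follows the paper's proof essentially step by step. Point-parameter runs at $b_1,b_2$ certify $E(b_1)<1<E(b_2)$, and strict monotonicity of $E$ on $W$ then places $\beta_*$ in $(b_1,b_2)$. An interval-parameter run over $[b_1,b_2]$ encloses $A_*^2$, which gives $\mathsf J_2=-\log(1-A_*^2)$ by the Pohozaev identity, and $u_*(0)=\beta_*/(2\sqrt\pi)$ gives the last bound.

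One small quantitative correction: the tabulated values show that $\mathrm dE/\mathrm d\beta$ near $\beta_*$ is only about $0.03$--$0.05$, not of order one. The margins $|E(b_i)-1|$ are therefore of order $10^{-10}$ rather than $4\cdot10^{-9}$, which makes the high-precision requirement you flagged in Step~1 even more stringent.
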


\begin{proof}
The Arb computation of Proposition~\ref{prop:p2-cap}, applied at the
points $b_1$ and $b_2$, gives $E(b_1)<1<E(b_2)$.  Both points lie in $W$,
and $E$ is continuous and strictly increasing on $W$, so
$\beta_*\in(b_1,b_2)$.  The same computation over the interval
$[b_1,b_2]$ encloses $A^2$ at the first zero of $G'$ in
$[0.77286830214,\,0.77286830466]$.  By Proposition~\ref{prop:p2-pohozaev},
$\mathsf J_2=\mathcal J_2(f_*)=-\log(1-A_*^2)$.  This gives
\begin{gather*}
 \mathsf J_2\in[1.4822252629,\,1.4822252741],\\
 \mathrm{MT}(D)=e^{\mathsf J_2}\in[4.40273202,\,4.40273208].
\end{gather*}
The bound for $u_*(0)=\beta_*/(2\sqrt\pi)$ follows from
Theorem~\ref{thm:disk-unique}.  The CAPD computation, carried out with
the nearby endpoints
\[
 \widetilde b_1=\textnormal{\texttt{0x1.a4482190ae0c6p+1}},\qquad
 \widetilde b_2=\textnormal{\texttt{0x1.a448219945126p+1}},
\]
gives $E(\widetilde b_1)<1<E(\widetilde b_2)$ and the weaker enclosure
$\mathsf J_2\in[1.48219,\,1.48227]$.
\end{proof}

\begin{proof}[Proof of Theorem~\ref{thm:C}]
The theorem follows from Theorem~\ref{thm:disk-unique},
Corollary~\ref{cor:disk-slope}, and
Propositions~\ref{prop:disk-nondeg} and~\ref{prop:disk-constant}, together
with scaling.
\end{proof}

\begin{remark}
\label{rem:disk-literature}
In the companion draft \cite{MTpaper} we prove the uniqueness of the
extremal by a different method, which works directly with the radial
solutions $u_\beta$: an expansion of each solution in powers of $1-r^2$
with positive coefficients,
validated integration for $3/2\leq\beta\leq256$, and analytic estimates
for the value of the functional when $\beta\geq256$.  It gives
$\beta_*\in[3.28345126,\,3.28345128]$, in agreement with
Proposition~\ref{prop:disk-constant}.

For $\alpha=4\pi$, \cite[Theorem~3]{ADOMR2026} states
\eqref{eq:disk-stability} with respect to the distance to the set of
extremals, which is $\{\pm u_*\}$ by Theorem~\ref{thm:disk-unique}.  The
arguments differ for radial perturbations.  Here coercivity on the radial
elements of $\mathcal T_{u_*}$ is reduced to $\Lambda'(\beta_*)\neq0$
(Remark~\ref{rem:disk-nondeg}; compare \cite{Maddocks1985,GSS1987}).  In
\cite[proof of Theorem~3, Step~1]{ADOMR2026} it rests on two points.  The
first is the Courant--Fischer principle applied on the tangent space; this
principle gives the second eigenvalue as a lower bound for the Rayleigh
quotient only on the orthogonal complement of the first eigenfunction.
The second is a comparison of the second radial eigenvalue with the
multiplier, obtained for small $\alpha$.  For $\alpha=4\pi$ both points
need an additional argument; the proof of
Proposition~\ref{prop:disk-nondeg} does not use them.
\end{remark}

\appendix

\section{The P\'olya--Szeg\H{o} inequality via mass transport}
\label{sec:appendix-ps}

For completeness we recall how Proposition~\ref{prop:ps} follows from the
sharp isoperimetric inequality, and how the latter follows from mass
transport; see \cite{FMP2010} and the references there, including
Gromov's argument with the Knothe map.

\begin{lemma}\label{lem:L1-sobolev}
For every nonnegative $f\in C_c^\infty(\R^n)$,
\begin{equation}\label{eq:L1-sobolev}
  n\omega_n^{1/n}
  \left(\int_{\R^n}f^{n/(n-1)}\,\dd x\right)^{(n-1)/n}
  \leq\int_{\R^n}\lvert\nabla f\rvert\,\dd x.
\end{equation}
\end{lemma}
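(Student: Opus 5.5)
The plan is to establish \eqref{eq:L1-sobolev} first for indicator functions of bounded smooth sets, which is the isoperimetric inequality, and then recover a general $f$ by the coarea formula and the layer cake representation.

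\emph{Step 1: the isoperimetric inequality via transport.} Let $E\subset\R^n$ be bounded and open with smooth boundary, and let $B$ be the unit ball. Let $T=\nabla\varphi$ be the Brenier map pushing $|E|^{-1}\one_E\,\dd x$ forward to $\omega_n^{-1}\one_B\,\dd x$. Alternatively, the Knothe map can be used; it is triangular with positive diagonal derivatives and has elementary regularity. Since $T(E)\subset\bar B$, we have $|T|\leq1$. The Jacobian equation reads $\det DT=\omega_n/|E|$ on $E$. The arithmetic--geometric mean inequality for the nonnegative eigenvalues of $DT$ then gives
\[
 n\Bigl(\frac{\omega_n}{|E|}\Bigr)^{1/n}=n(\det DT)^{1/n}\leq\operatorname{div}T .
\]
Integrating over $E$ and applying the divergence theorem, together with $|T\cdot\nu|\leq1$ on $\partial E$, yields
\[
 n\omega_n^{1/n}|E|^{(n-1)/n}\leq\int_E\operatorname{div}T\,\dd x\leq\mathcal H^{n-1}(\partial E).
\]

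\emph{Justification of the divergence step.} This is the main technical obstacle, because the Brenier map is in general only BV. One option is to use the Knothe map, whose components are monotone and which can be taken smooth after mollifying $\one_E$. The cleaner option, following \cite{FMP2010}, is to note that $D^2\varphi$ is a nonnegative matrix-valued measure. Its singular part is nonnegative, so the distributional divergence dominates the absolutely continuous trace, and this gives the integrated inequality directly. I would take the latter route, citing McCann's monotonicity result for the a.e.\ Jacobian equation.

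\emph{Step 2: from sets to functions.} For nonnegative $f\in C_c^\infty$, almost every superlevel set $\{f>t\}$ has smooth boundary, by Sard's theorem. The coarea formula gives
\[
 \int|\nabla f|\,\dd x=\int_0^\infty\mathcal H^{n-1}(\{f=t\})\,\dd t
 \geq n\omega_n^{1/n}\int_0^\infty|\{f>t\}|^{(n-1)/n}\,\dd t .
\]
It then remains to show that
\[
 \|f\|_{n/(n-1)}\leq\int_0^\infty\bigl\|\one_{\{f>t\}}\bigr\|_{n/(n-1)}\,\dd t .
\]
This is Minkowski's integral inequality applied to the layer cake representation $f=\int_0^\infty\one_{\{f>t\}}\,\dd t$. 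Combining the two displays proves \eqref{eq:L1-sobolev}.
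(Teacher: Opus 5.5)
Your proof is correct, but it takes a different route from the paper.

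\textbf{The paper's route.} The paper applies the Brenier map to the function itself. After normalizing $\int f^{n/(n-1)}=1$, it transports $f^{n/(n-1)}\dd x$ to $\omega_n^{-1}\one_{B_1}\dd y$. The chain
$\omega_n^{1/n}=\int f\,(\det D^2_A\varphi)^{1/n}\leq\frac1n\int f\,\dd(\Delta_{\mathcal D'}\varphi)=-\frac1n\int\nabla f\cdot\nabla\varphi\leq\frac1n\int\lvert\nabla f\rvert$
then gives \eqref{eq:L1-sobolev} in one step. This is the $p=1$ case of the Cordero-Erausquin--Nazaret--Villani argument. The isoperimetric inequality is afterwards deduced by approximating indicators.

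\textbf{Your route.} You reverse the order. Transport between two uniform densities, as in Gromov's argument, gives isoperimetry for smooth sets. Coarea and Minkowski's integral inequality on the layer-cake representation then give the functional form; this is the classical Federer--Fleming--Maz'ya equivalence.

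\textbf{Comparison.} The paper's version has no boundary term: integrating by parts against a smooth compactly supported $f$ is just the definition of the distributional Laplacian. It also needs neither Sard's theorem nor the coarea formula.

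Your version must handle a boundary term on $\partial E$ for the field $\nabla\varphi$, which is merely BV. Your ``cleaner option'' works, but the final inequality $\Delta_{\mathcal D'}\varphi(E)\leq\mathcal H^{n-1}(\partial E)$ still needs a line. The simplest way is to test against inner Lipschitz cutoffs $\chi_\epsilon=\min\{1,\operatorname{dist}(\cdot,\partial E)/\epsilon\}$ on $E$, extended by zero. Monotone convergence and $\lvert\nabla\varphi\rvert\leq1$ then give
$\int_E\Delta_A\varphi\leq\Delta_{\mathcal D'}\varphi(E)=\lim_{\epsilon\to0}\int\chi_\epsilon\,\dd(\Delta_{\mathcal D'}\varphi)=\lim_{\epsilon\to0}\bigl(-\int\nabla\chi_\epsilon\cdot\nabla\varphi\bigr)\leq\lim_{\epsilon\to0}\int\lvert\nabla\chi_\epsilon\rvert=\mathcal H^{n-1}(\partial E)$.
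This is exactly the paper's integration by parts, applied to an approximation of $\one_E$.

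In exchange, your route gives the sharp isoperimetric inequality for smooth sets directly. That is precisely what the appendix's subsequent P\'olya--Szeg\H{o} argument uses on almost every level set of a smooth function. So the paper's step of approximating finite-perimeter sets would not be needed there.
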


\begin{proof}
By homogeneity, normalize $\int f^{n/(n-1)}=1$ and set
\[
  F(x):=f(x)^{n/(n-1)},
  \qquad
  G(y):=\frac{\one_{B_1}(y)}{\omega_n}.
\]
Let $T=\nabla\varphi$ be the Brenier map transporting $F\,\dd x$ to
$G\,\dd y$.  The almost-everywhere Monge--Amp\`ere relation and the
arithmetic--geometric mean inequality give
\begin{align*}
  \omega_n^{1/n}
  &=\int_{B_1}G(y)^{1-1/n}\,\dd y\\
  &=\int_{\R^n}F(x)^{1-1/n}
    \bigl(\det D_A^2\varphi(x)\bigr)^{1/n}\,\dd x\\
  &\leq\frac1n\int_{\R^n}f\,\dd(\Delta_{\mathcal D'}\varphi)\\
  &=-\frac1n\int_{\R^n}\nabla f\cdot T\,\dd x\\
  &\leq\frac1n\int_{\R^n}\lvert\nabla f\rvert\,\dd x.
\end{align*}
Here $D_A^2\varphi$ denotes the Aleksandrov Hessian, whose trace is
bounded above by the distributional Laplacian $\Delta_{\mathcal D'}\varphi$,
a nonnegative Radon measure, and
$\lvert T\rvert\leq1$ almost everywhere because the target is $B_1$.
The standard approximation of the source and target densities justifies
the calculation without additional smoothness; see
\cite{Brenier1991,McCann1995,CENV2004}.
\end{proof}

Approximating the indicator of a finite-perimeter set in
Lemma~\ref{lem:L1-sobolev} gives the sharp isoperimetric inequality
\begin{equation}\label{eq:isoperimetric}
  P(E)\geq n\omega_n^{1/n}\lvert E\rvert^{(n-1)/n}.
\end{equation}
If $v\geq0$ is smooth and
$\mu(t)=\lvert\{v>t\}\rvert$, the coarea formula and Sard's theorem give,
for almost every $t$,
\begin{align*}
  -\mu'(t)&=\int_{\{v=t\}}\frac1{\lvert\nabla v\rvert}
    \,\dd\mathcal H^{n-1},\\
  \int\lvert\nabla v\rvert^n\,\dd x
  &=\int_0^\infty\int_{\{v=t\}}
    \lvert\nabla v\rvert^{n-1}\,\dd\mathcal H^{n-1}\,\dd t.
\end{align*}
H\"older's inequality on each level surface yields
\[
  P(\{v>t\})^n
  \leq
  \left(\int_{\{v=t\}}\lvert\nabla v\rvert^{n-1}
    \,\dd\mathcal H^{n-1}\right)
  \bigl(-\mu'(t)\bigr)^{n-1}.
\]
Using \eqref{eq:isoperimetric} and integrating in $t$ gives
\eqref{eq:ps}, because every level set of the Schwarz rearrangement is a
ball and equality holds in the level-surface estimates.  Approximation and
lower semicontinuity give the assertion for $W_0^{1,n}$ functions; for
Sobolev functions the first coarea identity holds only as an inequality
$\geq$, which suffices (see \cite{Talenti1976,BZ1988}).

\section{The computer-assisted estimates}
\label{sec:appendix-cap}

\subsection{Interval arithmetic}
We use two independent implementations of the same initial value problem:
Arb \cite{Johansson2017}, through Python v2, and CAPD \cite{CAPD2021},
using filib interval arithmetic.  Their validated integrations are
described in Subsection~\ref{sec:appendix-integration}.  Arb also evaluates
the logarithmic bounds in Corollary~\ref{cor:p2-omega} and
Lemma~\ref{lem:p2-klow}.  For Proposition~\ref{prop:disk-constant}, the
two implementations use distinct nearby endpoints and give the enclosures
stated there.

The output files record the arithmetic precision, power-series cap and
integration order.  The Arb code checks that every Taylor coefficient
needed for the remainder is available.  Accompanying scripts verify the
exact parameter covering and compare the stored outward-rounded bounds
with Proposition~\ref{prop:p2-cap}.  Reproducing the time-step enclosures
requires running the integrators with the initial bounds, validated
remainders and event-location procedure below.  High-precision
floating-point comparisons serve only as consistency checks.

\subsection{Validated integration on the compact parameter interval}
\label{sec:appendix-integration}

We give the validated-integration argument for
Proposition~\ref{prop:p2-cap}.
The parameter range is divided into closed subintervals with dyadic
endpoints.  On each subinterval $\varepsilon$ is carried as an additional
state variable with $\dot\varepsilon=0$, so that every enclosure holds for
all parameters in the subinterval.

At $y_0=e^{-28}$ the state is enclosed by Lemma~\ref{lem:app-start}: with
$a=4\varepsilon y_0$ one has $g,\theta\in[1-a,1]$ on $[0,y_0]$,
\[
 g-1,\ g_1\in-\frac{\varepsilon y_0}2\Bigl[1-a,\frac1{1-y_0}\Bigr],\qquad
 \theta-1,\ \theta_1\in-\frac{\varepsilon y_0}2
 \Bigl[1-a,\frac{1+y_0}{(1-y_0)^2}\Bigr],
\]
and $\mathcal E(\log y_0)\in[0,y_0]$.  From $\tau_0=-28$ the system
\eqref{eq:p2-tau-system} is integrated by a validated Taylor method
\cite{Lohner1987}, and on every step the enclosure of the trajectory is
shown to satisfy $g>0$, $\varepsilon>0$ and $1-Z>0$.  This gives $G>0$
and $G^2<y$, and by the monotonicity noted after
\eqref{eq:p2-tau-system}, $G'$ is strictly decreasing.  To locate the first
zero of $G'$, the steps on which $g+g_1$ may vanish are collected until
the enclosure satisfies $g+g_1<0$; the quantities $\mathcal E$, $A^2=Z$,
$\Theta'=\theta+\theta_1$,
$\mathfrak g=(1-Z)/2-\varepsilon e^\tau\theta/(\theta+\theta_1)$ and
$X=\varepsilon e^\tau$ are then enclosed over the union of these steps.
For part~(4) of Proposition~\ref{prop:p2-cap}, the enclosure at the end of
some step has to satisfy $g+g_1>0$
and $Z\geq0.998$.  A subinterval on which the required inequality is not
obtained is bisected.

The computation was carried out twice, independently.
The first uses the ball arithmetic of Arb \cite{Johansson2017}, through
the Python implementation v2 supplied with this paper, with $128$-bit
precision and a power-series length cap of $64$.  Each step uses a Taylor polynomial of degree at most
$15$ and a Lagrange remainder of order $16$, which is enclosed over the
whole step by an a~priori bound obtained from a Picard-type inclusion,
and sets are propagated with Lohner's QR method \cite{Lohner1987}.  The
second is based on the CAPD library \cite{CAPD2021}; it uses the
autonomous form of \eqref{eq:p2-tau-system} with $y$ as a state variable
and the $C^0$ Taylor method of order $20$ with automatic step control.
Both computations establish parts~(1)--(4) of
Proposition~\ref{prop:p2-cap} with the bounds stated.  On the four ranges the Arb
computation uses $124$, $110$, $856$ and $1920$ subintervals and the
CAPD computation $147$, $110$, $1094$ and $1920$.  The Arb computation
gives the sharper bounds $E\leq0.99989065$, $\mathfrak g\geq0.0098197$,
$\Theta'(Y)\leq-1.26313$ and $E\geq1.00021868$ on the respective ranges,
and the enclosures of $X(\beta_\pm)$ in (2) are taken from it.

\subsection{Start of the integration}
The following lemma justifies the initial enclosure used in the proof of
Proposition~\ref{prop:p2-cap}.  Write
\[
 \mathfrak h(s)=\frac{g(s)}{1-sg(s)^2},\qquad
 \mathfrak k(s)=\frac{1+sg(s)^2}{(1-sg(s)^2)^2}\,\theta(s).
\]
The Volterra equation \eqref{eq:p2-volterra}, its analogue for $\theta$,
and their derivatives give
\begin{equation}\label{eq:app-volterra}
\begin{aligned}
 g(y)&=1-\frac\varepsilon y\int_0^y(y-s)\mathfrak h(s)\,\dd s,&
 g_1(y)&=-\frac\varepsilon y\int_0^ys\,\mathfrak h(s)\,\dd s,\\
 \theta(y)&=1-\frac\varepsilon y\int_0^y(y-s)\mathfrak k(s)\,\dd s,&
 \theta_1(y)&=-\frac\varepsilon y\int_0^ys\,\mathfrak k(s)\,\dd s,
\end{aligned}
\end{equation}
where $g_1=\dd g/\dd\tau$ and $\theta_1=\dd\theta/\dd\tau$ as in
\eqref{eq:p2-tau-system}.

\begin{lemma}\label{lem:app-start}
Let $0<y_0\leq1/2$ and $\varepsilon>0$ with $a:=4\varepsilon y_0<1$.  Then
$g,\theta\in[1-a,1]$ on $[0,y_0]$, and
\begin{gather*}
 g(y_0)-1,\ g_1(y_0)\in-\frac{\varepsilon y_0}2\Bigl[1-a,\frac1{1-y_0}\Bigr],
 \\
 \theta(y_0)-1,\ \theta_1(y_0)\in-\frac{\varepsilon y_0}2
 \Bigl[1-a,\frac{1+y_0}{(1-y_0)^2}\Bigr].
\end{gather*}
Moreover $0<G'\leq1$ on $[0,y_0]$, so
$\mathcal E(\log y_0)\in[0,y_0]$.
\end{lemma}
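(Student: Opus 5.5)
We will work entirely from the Volterra representation \eqref{eq:app-volterra}, run as a bootstrap on $[0,y_0]$. The first task is to establish the a priori bounds $g,\theta\in[1-a,1]$ and $sg(s)^2\leq s\leq y_0\leq1/2$ on $[0,y_0]$. Take the maximal subinterval $[0,y_1]$ on which $0<g\leq1$ and $0<\theta\leq1$; there the kernels are positive and satisfy
\[
 0<\mathfrak h\leq\frac1{1-s}\leq2,\qquad 0<\mathfrak k\leq\frac{1+s}{(1-s)^2}\leq6 .
\]
Then \eqref{eq:app-volterra} gives $g\leq1$ and
\[
 1-g(y)\leq\frac\varepsilon y\int_0^y(y-s)\,2\,\dd s=\varepsilon y\leq a,
\]
and similarly $1-\theta(y)\leq3\varepsilon y\leq a$ (using $6/2=3<4$). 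Since $a<1$, $g$ and $\theta$ stay strictly positive, so the interval cannot end before $y_0$; continuity of the Volterra solution (Lemma~\ref{lem:p2-shooting}) closes the bootstrap. Existence on $[0,y_0]$ itself follows because $Z=sg^2\leq1/2$ keeps the nonlinearity Lipschitz.

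Next come the two-sided enclosures at $y_0$. For the upper bounds on $-(g-1)$ and $-g_1$ we use $\mathfrak h(s)\leq1/(1-y_0)$ together with
\[
 \int_0^{y}(y-s)\,\dd s=\int_0^ys\,\dd s=\frac{y^2}2,
\]
which gives $1-g(y_0)$ and $-g_1(y_0)$ at most $\frac{\varepsilon y_0}2\cdot\frac1{1-y_0}$. For the lower bounds we use $\mathfrak h\geq g\geq1-a$, since $1-sg^2\leq1$; this gives at least $\frac{\varepsilon y_0}2(1-a)$. The same argument for $\theta$ uses
\[
 \mathfrak k\leq\frac{1+y_0}{(1-y_0)^2}\quad(\text{since }\theta\leq1,\ sg^2\leq y_0),\qquad \mathfrak k\geq\theta\geq1-a .
\]

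Finally, the energy enclosure comes from $G'=g+g_1$ and the formula
\[
 G'(y)=1-\varepsilon\int_0^y\mathfrak h(s)\,\dd s .
\]
This shows $G'\leq1$, and $G'\geq1-2\varepsilon y\geq1-a/2>0$. Hence $0\leq G'^2\leq1$ and $\mathcal E(\log y_0)=\int_0^{y_0}G'^2\in[0,y_0]$.

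The only delicate point is the bootstrap at $s=0$, namely ensuring the solution and the positivity bounds start correctly. This is handled by the Volterra formulation with bounded kernel, so no real obstacle is expected. The rest consists of elementary integral bounds.
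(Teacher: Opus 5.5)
Your proposal is correct and is essentially the paper's proof. Both arguments run a continuity (bootstrap) argument on the Volterra representation \eqref{eq:app-volterra}: positivity of $\mathfrak h,\mathfrak k$ gives $g,\theta\leq1$, pointwise bounds on $\mathfrak h,\mathfrak k$ combined with the fact that both kernels integrate to $y/2$ give the enclosures, and $G'=1-\varepsilon\int_0^y\mathfrak h$ gives the energy bound. The only difference is cosmetic: in the bootstrap you use the constants $2$ and $6$ where the paper uses $1/(1-y_0)$ and $(1+y_0)/(1-y_0)^2$, and both give the same margins $1-g\leq a/4$ and $1-\theta\leq 3a/4$.
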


\begin{proof}
The functions $g$ and $\theta$ are continuous and equal one at $y=0$.
Wherever $g,\theta>0$ and $sg^2<1$, the functions $\mathfrak h$ and
$\mathfrak k$ are positive, so \eqref{eq:app-volterra} gives $g,\theta\leq1$
there.  Let $y_1$ be the supremum of those $y\leq y_0$ such that the
solution exists on $[0,y]$ and $g,\theta\in[1-a,1]$ there; by continuity
$y_1>0$.  On $[0,y_1]$ we have
$sg^2\leq s\leq y_0$.  Since $t\mapsto(1+t)/(1-t)^2$ is increasing on
$[0,1)$, this gives
\[
 \mathfrak h\in\Bigl[1-a,\frac1{1-y_0}\Bigr],
 \qquad
 \mathfrak k\in\Bigl[1-a,\frac{1+y_0}{(1-y_0)^2}\Bigr].
\]
Both kernels $(y-s)/y$ and $s/y$ in \eqref{eq:app-volterra} have integral
$y/2$ over $[0,y]$.  Hence, on $[0,y_1]$,
\[
 0\leq1-g\leq\frac{\varepsilon y_0}{2(1-y_0)}\leq\varepsilon y_0,
 \qquad
 0\leq1-\theta\leq\frac{\varepsilon y_0(1+y_0)}{2(1-y_0)^2}\leq3\varepsilon y_0,
\]
so $g,\theta\in[1-3a/4,1]$.  If $y_1<y_0$, continuity (and the local existence for
\eqref{eq:app-volterra}) would contradict the definition of $y_1$.  Thus $y_1=y_0$, and the stated enclosures follow
from \eqref{eq:app-volterra}.  Finally,
$G'=g+g_1=1-\varepsilon\int_0^y\mathfrak h\in[1-\varepsilon y_0/(1-y_0),1]$, which is
positive.
\end{proof}

In Proposition~\ref{prop:p2-cap}, $y_0=e^{-28}$ and
$\varepsilon\leq\beta_{\min}^{-2}<1.452$, so $a<4.1\cdot10^{-12}$.

\subsection{The bound in Corollary~\ref{cor:p2-omega}}
We describe the computation for $\eta=0.002$.  Put $T_m:=\sqrt{\eta/(1-\eta)}\approx0.04477$ and
$\alpha_m:=\arctan T_m$.  On $S_\eta$ we have
$\tan^2\alpha+\tan^2\alpha'\leq T_m^2$, and in particular
$\alpha\leq\alpha_m\approx0.0447<\pi/4$.  With
$\mathrm h(t)=-t\log t-(1-t)\log(1-t)$, $t_1=\sqrt{(1-x_0)/x_0}$, and
$t_{\alpha'}=\tan\alpha'$, one has $-\log x_0-\mathrm h(x_0)=2x_0t_1^2\log t_1$,
and therefore
\begin{equation}\label{eq:app-R}
\begin{aligned}
 R(x_0,\alpha')-\mathrm h(x_0)={}&-2(1-x_0)\log\cos\alpha'
 +2x_0t_1^2\log t_1\\
 &-2x_0(t_1^2-t_{\alpha'}^2)\log|t_1-t_{\alpha'}|\\
 &+2x_0t_1t_{\alpha'}+2x_0t_{\alpha'}^2\log(1/t_{\alpha'}).
\end{aligned}
\end{equation}
The following elementary facts are used: $L$ is increasing on
$[0,\pi/4)$; $t\mapsto t^2\log(1/t)$ is increasing on $[0,e^{-1/2}]$, and
$T_m<e^{-1/2}$; and $\operatorname{sl}(s):=s\log|s|$ (with $\operatorname{sl}(0)=0$) is monotone
on each of $(-\infty,-1/e]$, $[-1/e,1/e]$, and $[1/e,\infty)$.  Hence the
range of $\operatorname{sl}$ over an interval is enclosed by its values at the
endpoints and at those of $\pm1/e$ that the interval contains.

\emph{Sectors.}  Write
$(\tan\alpha,\tan\alpha')=\rho(\cos\vartheta,\sin\vartheta)$ with
$0\leq\rho\leq T_m$ and $\vartheta\in[\vartheta_k,\vartheta_{k+1}]$, where
$\vartheta_k=k\pi/64$ and $0\leq k\leq31$.  On the $k$-th sector,
$\tan\alpha\leq T_m\cos\vartheta_k$ and
$\tan\alpha'\leq T_m\sin\vartheta_{k+1}$.

\emph{Subintervals.}  Let $\xi_i$, $0\leq i\leq1999$, be the binary
double-precision values obtained by evaluating $0.01+0.99\,i/2000$
as in the supplied code, and set
\[
 x_m:=\xi_0=\textnormal{\texttt{0x1.47ae147ae147bp-7}},
 \qquad x_l:=\xi_{1999}.
\]
The same exact endpoints are used for the adjacent analytic bounds.
For $0\leq i\leq1998$, on each subinterval $[\xi_i,\xi_{i+1}]$ and
each sector we bound $x_0L(\alpha)=x_0+x_0(L(\alpha)-1)$ using the upper end of
$L-1$ at $\tan\alpha=T_m\cos\vartheta_k$.  We enclose
\eqref{eq:app-R} by interval evaluation over $x_0\in[\xi_i,\xi_{i+1}]$ and
$t_{\alpha'}\in[0,T_m\sin\vartheta_{k+1}]$.  If $t_1\geq2t_{\alpha'}$ holds on the
whole subinterval, we use $\log|t_1-t_{\alpha'}|=\log t_1+\log(1-\varpi)$ with
$\varpi=t_{\alpha'}/t_1\leq1/2$, so \eqref{eq:app-R} equals
\begin{multline*}
 -2(1-x_0)\log\cos\alpha'+2x_0t_{\alpha'}^2\log t_1
 -2x_0(t_1^2-t_{\alpha'}^2)\log(1-\varpi)\\
 +2x_0t_1t_{\alpha'}+2x_0t_{\alpha'}^2\log(1/t_{\alpha'}).
\end{multline*}
Otherwise we write
$(t_1^2-t_{\alpha'}^2)\log|t_1-t_{\alpha'}|=(t_1+t_{\alpha'})\,\operatorname{sl}(t_1-t_{\alpha'})$ and
enclose $\operatorname{sl}$ as above.  This treats the logarithmic singularity at
$t_1=t_{\alpha'}$, where the product is continuous.  The maximum of the
resulting upper bounds for
$x_0L(\alpha)+R(x_0,\alpha')-\log(1+e)$ over all these subintervals and
sectors is $0.1471504229196$ (attained near $x_0\approx0.744$).

\emph{Small $x_0$.}  For $0<x_0\leq x_m$ we have
$t_1\geq\sqrt{(1-x_m)/x_m}$,
so $\varpi\leq\varpi_m:=T_m\sqrt{x_m/(1-x_m)}$, and $-\log(1-\varpi)\leq\varpi/(1-\varpi)$.
Using that $x+\mathrm h(x)$ and $x\log(1/x)$ are increasing on
$(0,1/e)$, we obtain the bound
\[
\begin{aligned}
 x_0L(\alpha)+R(x_0,\alpha')\leq{}&x_m+\mathrm h(x_m)+x_m\bigl(L(\alpha_m)-1\bigr)
 -2\log\cos\alpha_m\\
 &+T_m^2x_m\log\frac1{x_m}
 +\frac{2\sqrt{x_m(1-x_m)}\,T_m}{1-\varpi_m}\\
 &+2\sqrt{x_m}\,T_m+2x_mT_m^2\log\frac1{T_m}.
\end{aligned}
\]
After subtracting $\log(1+e)$ this is at most $-1.2255$.

\emph{The subinterval next to $x_0=1$.}  For $x_0\in[x_l,1]$,
let $t_l=\sqrt{(1-x_l)/x_l}$, so that $t_1\leq t_l<1$.  We use
$x_0\leq1$, the fact that $\mathrm h$ is decreasing on $[1/2,1]$,
$2x_0t_1^2\log t_1\leq0$, and $|\operatorname{sl}(s)|\leq1/e$ for $|s|\leq1$.  This
gives
\[
\begin{aligned}
 x_0L(\alpha)+R(x_0,\alpha')\leq{}&1+\mathrm h(x_l)+\bigl(L(\alpha_m)-1\bigr)
 -2(1-x_l)\log\cos\alpha_m\\
 &+\frac2e(t_l+T_m)+2t_lT_m+2T_m^2\log\frac1{T_m},
\end{aligned}
\]
which, after subtracting $\log(1+e)$, is at most $-0.0904$.  Altogether
$\Upsilon(0.002)\leq0.1471504229196$.

\subsection{The lower bound in Lemma~\ref{lem:p2-klow}}
The function $f_{\rm test}$ used in the proof is continuous
and piecewise linear with nodes $x_i=i/1000$.  Its nodal values are fixed
IEEE double-precision numbers, listed in the supplement; they were obtained
from a non-rigorous solution of \eqref{eq:p2-ivp} at $\beta\approx3.2834539$,
and $f_{\rm test}(1)=0$.  In ball arithmetic we compute
$\lVert f_{\rm test}\rVert_2^2=\sum_i\Delta x\,(a_i^2+a_ia_{i+1}+a_{i+1}^2)/3$
exactly up to rounding, where $\Delta x=1/1000$ and $a_i=f_{\rm test}(x_i)$.  From
this we obtain an upper bound $N$ for $\lVert f_{\rm test}\rVert_2$.  Each
interval $[x_i,x_{i+1}]$ is split into $40$ subintervals given by exact
ball expressions.  On each subinterval the primitive of $f_{\rm test}/N$
is a quadratic polynomial, which we evaluate in ball arithmetic.  The
integrand $-\log(1-Z)$ is enclosed on the subinterval, and the lower end
of the enclosure times the length is added to a lower sum.  The first
subinterval at $x=0$, where the integrand is nonnegative, is omitted.  The
result is $\mathcal J_2(f_{\rm test}/N)\geq1.4819738073$.

\subsection{Margins}
In the proof of Theorem~\ref{thm:B}, the bounds of
Corollary~\ref{cor:p2-omega} and Lemma~\ref{lem:p2-klow} differ by
$1.481973-1.46041212>0.0215$.  On $W$ the bound $\mathfrak g\geq0.0094$
compares with $\mathfrak g(\beta_*)\approx0.0100$.  On
$[\beta_{\min},\beta_-]$ and $[\beta_+,5]$ the enclosures of $E$ stay at
distance at least $3\cdot10^{-5}$ and $4\cdot10^{-5}$ from one.  On
$[5,32]$ the enclosures of $Z$ exceed $0.998$ at the chosen points, in the
worst subinterval by about $2\cdot10^{-8}$.

\subsection{Approximate values of the dual constant}
\label{sec:appendix-values}

The table lists approximate values of $\mathsf J_p$, obtained by maximizing
$\mathcal J_p$ over continuous piecewise linear functions on a uniform mesh
with $200$ subintervals; the values obtained with $100$ subintervals agree
at the displayed precision.  The values are not
rigorous and are not used in any proof.  For $p=n$ the fourth column
approximates the exact supremum in \eqref{eq:intro-mt} on a ball in $\R^n$,
and the last column is the Carleson--Chang level.

\begin{center}
\begin{tabular}{ccccc}
 \hline
 $p$ & $\mathsf J_p$ & $\log(1+e^{\psi(p)+\gamma})$ &
 $e^{\mathsf J_p}$ & $1+e^{\psi(p)+\gamma}$\\
 \hline
 $1.5$ & $1.14600$ & $1.04636$ & $3.1456$ & $2.8473$\\
 $2$ & $1.48223$ & $1.31326$ & $4.4027$ & $3.7183$\\
 $3$ & $1.94087$ & $1.70141$ & $6.9648$ & $5.4817$\\
 $4$ & $2.25585$ & $1.98165$ & $9.5434$ & $7.2547$\\
 \hline
\end{tabular}
\end{center}

\subsection{Values along the curve of solutions}
Numerically, the maximizer corresponds to the parameter
$\beta_*\approx3.2834513$ of Section~\ref{sec:p2-quadratic}, with $\lambda_*\approx5.901049$, $F_*(1)^2\approx0.772868$,
$\sup Z_{f_*}\approx0.957524$, and
$\mathsf J_2\approx1.4822253$.

The table lists approximate values of $E(\beta)$ and of $\mathcal J_2(f_\beta)$,
where $f_\beta$ is the (not normalized) solution with parameter $\beta$,
computed from \eqref{eq:p2-tau-system}, and of $\Lambda(\beta)$ and
$\log\frac1{|D|}\int_De^{4\pi u_\beta^2}$, computed independently from the
radial equation $\omega''+r^{-1}\omega'+\omega e^{\omega^2}=0$.  By
Proposition~\ref{prop:disk-correspondence} the columns coincide in pairs;
the approximations agree to within $10^{-11}$.

\begin{center}
\begin{tabular}{ccccc}
 \hline
 $\beta$ & $E(\beta)$ & $\Lambda(\beta)$ & $\mathcal J_2(f_\beta)$ &
 $\log\frac1{|D|}\int_De^{4\pi u_\beta^2}$\\
 \hline
 $1$ & $0.322327222$ & $0.322327222$ & $0.256122198$ & $0.256122198$\\
 $2$ & $0.792612692$ & $0.792612692$ & $0.862701669$ & $0.862701669$\\
 $3$ & $0.983735546$ & $0.983735546$ & $1.397558894$ & $1.397558894$\\
 $3.2834513$ & $1.000000001$ & $1.000000001$ & $1.482225276$ & $1.482225276$\\
 $4$ & $1.010942531$ & $1.010942531$ & $1.558028401$ & $1.558028401$\\
 $6$ & $1.002336623$ & $1.002336623$ & $1.416875323$ & $1.416875323$\\
 \hline
\end{tabular}
\end{center}

\section*{Acknowledgments}
W.S.\ is supported by the National Natural Science Foundation of China
(No.~12401171) and by the National Key Research and Development Program of
China (No.~2022YFA1006500).  J.L.\ is supported by the National Natural
Science Foundation of China (No.~12571127).

\section*{Data availability}
The source code, fixed input data and output files for the computer-assisted
proofs form the computational supplement to this paper, which will
accompany the arXiv submission.  The paper contains no other data.

\section*{Conflict of interest}
The authors declare that they have no conflict of interest.

\section*{Declaration on the use of AI tools}
AI-based tools were used for language polishing and to assist with writing and
running the numerical code that produced the computer-assisted estimates
described in Appendix~\ref{sec:appendix-cap}.  The mathematical statements and
their proofs, and the output files of the validated computations, were
reviewed by the authors, who take full responsibility for the content of the
manuscript.

\bibliographystyle{amsplain}
\bibliography{references}
\enlargethispage{2\baselineskip}

\end{document}